\newtheorem{theorem}{Theorem}
\newtheorem{proposition}[theorem]{Proposition}
\newtheorem{lemma}[theorem]{Lemma}
\newtheorem{corollary}[theorem]{Corollary}
\theoremstyle{definition}
\newtheorem{example}[theorem]{Example}
\theoremstyle{remark}
\newtheorem{remark}[theorem]{Remark}
\numberwithin{equation}{section}
\numberwithin{theorem}{section}
\def\cT{\mathcal{T}}
\def\bP{\mathbb{P}}
\def\bE{\mathbb{E}}
\def\bN{\mathbb{N}}
\DeclareMathOperator{\E}{\mathbb{E}}  
\newcommand{\vep}{\varepsilon}
\def\namedlabel#1#2{\begingroup
    #2%
    \def\@currentlabel{#2}%
    \phantomsection\label{#1}\endgroup
}
\title[Risk-sensitive optimal stopping]{Risk-sensitive optimal stopping with unbounded terminal cost function}
\author{Damian Jelito}
\address{Institute of Mathematics, Jagiellonian University, Krak\'{o}w, Poland}
\email{damian.jelito@im.uj.edu.pl}
\author{{\L}ukasz~Stettner}
\address{Institute of Mathematics, Polish Academy of Sciences, Warsaw, Poland}
\email{l.stettner@impan.pl}
\begin{document}

\begin{abstract}
In this paper we consider an infinite time horizon risk-sensitive optimal stopping problem for a Feller--Markov process with an unbounded terminal cost function. We show that in the unbounded case an associated Bellman equation may have multiple solutions and we give a probabilistic interpretation for the minimal and the maximal one. Also, we show how to approximate them using finite time horizon problems. The analysis, covering both discrete and continuous time case, is supported with illustrative examples.

\bigskip
\noindent \textbf{Keywords:} optimal stopping, Feller-Markov process, Bellman equation, dynamic programming principle, unbounded cost function 

\bigskip
\noindent \textbf{MSC2020 subject classifications:} 93E20, 60G40, 49J21
\end{abstract}


\maketitle
\vspace{-1cm}

\section{Introduction}

Many practical optimal control problems could be expressed in terms of optimal stopping. This includes examples in mathematical finance (American options theory, optimal asset liquidation), statistics (sequential testing), operations research, ecology; see e.g.~\cite{Shi1978,BenLio1984,CarTou2008,BauRie2011} for details. 

Typically, a characterisation of the optimal stopping time is obtained through the study of the corresponding Snell envelope of the value process; see e.g.~\cite{Kar1981} for details and~\cite{KobQue2012} for more recent contribution. Also, in the Markovian case this could be done with the help of a specific optimality Wald--Bellman equation; see e.g.~\cite{Shi1978} for a classical contribution. The existence of a solution to this equation could be obtained e.g. by value iteration argument or penalty approach, see~\cite{Ste2011}. Also, it may result from the use of viscosity techniques applied to variational inequalities; see e.g.~\cite{BenLio1984} and~\cite{DaiMen2018}. 

Risk-sensitive problems constitute a special class of general stochastic control problems (in particular, optimal stopping problems). In this case, a decision-maker tries to optimise the certainty equivalent of the exponential utility function; see~\cite{HowMat1972} and~\cite{Whi1990}. This criterion may be seen as a non-linear extension of the mean-variance (Markowitz) approach which facilitates more robust control strategies; see e.g.~\cite{BiePli2003} for a comprehensive overview. However, using risk-sensitive criterion results in multiplicative control problems that are usually more difficult to solve than their classic risk-neutral (additive) counterparts; see~\cite{Nag2007b} and~\cite{BauPop2018}.

In this paper we consider the infinite time horizon risk-sensitive optimal stopping problems
\begin{align}
u(x)&:=\inf_{\tau}\ln\mathbb{E}_x\left[\exp\left(\int_{0}^{\tau} g(X_s)ds +G(X_{\tau})\right)\right], \quad x\in E;\label{eq:opt_stop_unb2}\\
w(x)&:=\inf_{\tau}\liminf_{T\to\infty}\ln\mathbb{E}_x\left[\exp\left(\int_{0}^{\tau\wedge T} g(X_s)ds+G(X_{\tau\wedge T})\right)\right], \quad x\in E,\label{eq:opt_stop_unb}
\end{align}
where $X$ is a standard Feller-Markov process starting at $x$ from the state space $E$, while $g$ and $G$ are continuous and non-negative \textit{running cost function} and \textit{terminal cost function}, respectively. The function $g$ is assumed to be bounded while $G$ may be unbounded from above.

The map $u$ describes the value of a standard risk-sensitive optimal stopping problem. As we show in this paper, the map $w$ emerges naturally as a limit of finite horizon stopping problems. Also, the map $w$ may be seen as a version of $u$, when a decision-maker is allowed to choose only bounded stopping times. Arguably, the main contribution of this paper is the proof that both functions $u$ and $w$ are solutions to the associated optimal stopping Bellman equation. In fact, we show that $u$ and $w$ are minimal and maximal solutions to this equation, respectively, and in general we do not have an equality between $u$ and $w$. 

This paper extends the results from~\cite{JelPitSte2019a}, where the function $G$ is assumed to be bounded. In that case, it can be shown that the Bellman equation admits a unique solution, which can be used to prove continuity of the function $u\equiv w$. This result was one of the main building blocks used in~\cite{JelPitSte2019b}, where the long-run impulse control problem was analysed. In the present paper we show a more general sufficient condition for the identity $u\equiv w$. This may be used to generalise the results from~\cite{JelPitSte2019b} to the unbounded case.

In the literature, regularity properties of the optimal stopping value function were mostly studied in the context of risk-neutral (additive) stopping problems; see e.g.~\cite{BasCec2002}. In particular, this applies to non-uniqueness of a solution to the Bellman equation; see Section 2.11 in~\cite{Shi1978} and Theorem 1.13 in~\cite{PesShi2006} for classic contributions. However, the risk-sensitive case is mostly unexplored; see~\cite{Nag2007b} and~\cite{JelPitSte2019a}. Also, it should be noted that many approximative solutions to optimal stopping problems are based on numerical solutions to the Bellman equation; see e.g.~\cite{KusDup2012} for a comprehensive overview. Thus, the study on regularity properties of optimality equation is important both from theoretical and practical point of view.

The structure of this paper is as follows. In Section~\ref{S:setting} we introduce notation and assumptions used throughout this paper. Next, in Section~\ref{S:discrete} we study discrete time version of the problem. The main contribution of this part is Theorem~\ref{th:und_over_prob}, where we link the discrete time Bellman equation with the limits of suitable finite horizon stopping value functions. In Section~\ref{S:continuous} we study a continuous time version of the problem. This is used in Section~\ref{S:Bellman_cont}, where we give a characterisation of solutions to the continuous time Bellman equation; see Theorem~\ref{th:Bellman_cont} for details. Also, in Theorem~\ref{th:suff_cont} we show a condition for the uniqueness of a solution to the Bellman equation. Our results are illustrated by the examples presented in Section~\ref{S:app2}.  In particular, in Example~\ref{ex:3} we show explicit formulae for distinct solutions to the Bellman equation. Finally, in Appendix~\ref{S:proofs} we present some deferred proofs.

\section{Preliminaries}\label{S:setting}
Let $X=(X_t)_{t\geq 0}$ be a time-homogeneous continuous time standard Markov process on a filtered measurable space $(\Omega,\mathcal{F},(\mathcal{F}_t))$ with values in a locally compact separable metric space $E$. With any $x\in E$ we associate a probability measure $\mathbb{P}_x$ describing the dynamics of the process starting from $X_0=x$; see Definition 4 in \cite[Section 1.4]{Shi1978} for details. We assume that $X$ satisfies the $C_0$-Feller property, i.e.
\[
\mathcal{P}_t \mathcal{C}_0(E)\subseteq \mathcal{C}_0(E), \quad t\geq 0,
\]
where $\mathcal{P}_t$ is the corresponding transition semigroup and $\mathcal{C}_0(E)$ denotes the family of real-valued continuous functions defined on $E$, vanishing at infinity. This is a standard assumption in the stochastic control theory. In particular, it is satisfied by L\'{e}vy processes and solutions to stochastic differential equations driven by L\'{e}vy processes; see Theorem 3.1.9 and Theorem 6.7.2 in~\cite{App2009} for details.

In addition to the $C_0$-Feller property of the Markov process, we assume several properties of the cost functions. To ease the notation, for any $T\geq 0$, let us define $\zeta_T:=\sup_{t\in [0,T]}e^{G(X_t)}$. Throughout this paper we make the following Assumptions:
\begin{enumerate}
\item[(\namedlabel{cost_functions}{$\mathcal{A}1$})] (Cost functions constraints). The map $G\colon E\mapsto [0,\infty)$ is continuous and the map $g\colon E\mapsto [0,\infty)$ is continuous and bounded. Also, the map $g$ is bounded away from zero, i.e. for some $c>0$ we get $g(\cdot)\geq c>0$. 
\item[(\namedlabel{IF}{$\mathcal{A}2$})] (Integrability). For any $T\geq 0$ and $x\in E$ we get
\[
\mathbb{E}_x\left[\zeta_T\right]<\infty.
\]
\item[(\namedlabel{Feller}{$\mathcal{A}3$})] (Continuity). For any $T\geq 0$ and a continuous function $h$ satisfying $0\leq h(\cdot)\leq G(\cdot)$, we get that the map
\[
x\mapsto \mathbb{E}_x\left[\exp \left(\int_0^T g(X_s) ds + h(X_T) \right)\right]
\]
is continuous.
\end{enumerate}

Let us now comment on these conditions. 

Assumption~\eqref{IF} requires several regularity properties for the cost functions. First, it should be highlighted that while $g$ is assumed to be bounded, we allow $G$ to be unbounded from above. Also, note that the non-negativity assumption for $G$ is merely a technical normalisation. Indeed, for a generic continuous map $\tilde{G}\colon E\mapsto \mathbb{R}$ which is bounded from below, we may subtract the quantity $\inf_{y\in E}\tilde{G}(y)$ from the both sides of~\eqref{eq:opt_stop_unb2} and~\eqref{eq:opt_stop_unb} and set $G(\cdot):=\tilde{G}(\cdot)-\inf_{y\in E}\tilde{G}(y)$. Finally, note that the assumption $g(\cdot)\geq c>0$ could be used to show that stopping at infinity cannot be optimal for our problems as this leads to infinite cost.

Assumption~\eqref{IF} requires integrability for the finite time horizon and is a standard condition in the optimal stopping literature.

Assumption~\eqref{Feller} requires continuity of the specific semigroup for unbounded functions $h$. Note that from the Feller property and monotone convergence theorem we get that $x\mapsto \mathbb{E}_x\left[\exp \left(\int_0^T f(X_s) ds +h(X_T)\right)\right]$ is lower semicontinuous for any $T\geq 0$ and a continuous function $h:E\mapsto [0,\infty)$. Thus, in assumption~\eqref{Feller} we additionally require upper semicontinuity. 

Further comments on Assumptions~\eqref{IF} and~\eqref{Feller} could be found in Section~\ref{SS:assumptions}. More specifically, we show that Assumptions~\eqref{IF} and~\eqref{Feller} could be deduced from a more general condition related to the integrability of the tail of $\zeta_T$, $T\geq 0$; see~\eqref{IF_2} and the following discussion for details.

Now, let us comment on the specific forms of~\eqref{eq:opt_stop_unb2} and~\eqref{eq:opt_stop_unb}. Setting 
\[
Z_t:=\exp\left(\int_0^{t} g(X_s)ds+G(X_{t})\right), \quad t\geq 0,
\]
from quasi-left continuity of $Z$ and Fatou Lemma, for any $x\in E$ and $\mathbb{P}_x$-almost surely finite stopping time $\tau$, we get 
\begin{equation}\label{eq:convention_1_2}
\mathbb{E}_x\left[Z_{\tau}\right]=\mathbb{E}_x\left[\liminf_{T\to\infty}Z_{\tau\wedge T}\right]\leq \liminf_{T\to\infty}\mathbb{E}_x\left[Z_{\tau\wedge T}\right].
\end{equation}
Some of the results in this paper are related to the situation when there is an equality in~\eqref{eq:convention_1_2}. Let us now provide a useful characterisation of this property.
\begin{lemma}\label{lm:convention}
Let $x\in E$ and let $\tau$ be a stopping time satisfying 
\[
\mathbb{E}_x\left[\exp\left(\int_0^{\tau} g(X_s)ds+G(X_{\tau})\right)\right]<\infty.
\]
Then, the following are equivalent
\begin{enumerate}
\item\label{p:lm:convention:1} We get
\[
\liminf_{T\to\infty}\mathbb{E}_x\left[Z_{\tau\wedge T}\right] =\mathbb{E}_x\left[\liminf_{T\to\infty}Z_{\tau\wedge T}\right].
\]
\item\label{p:lm:convention:2} The family $\left\{Z_{\tau\wedge T}\right\}$, $T\geq 0$, is $\mathbb{P}_x$-uniformly integrable, i.e.
\[
\lim_{n\to\infty}\sup_{T\geq 0}\mathbb{E}_x\left[1_{\{Z_{\tau\wedge T}\geq n\}} Z_{\tau\wedge T}\right]=0.
\]
\item\label{p:lm:convention:3} We get 
\[
\liminf_{T\to\infty} \mathbb{E}_x\left[1_{\{\tau>T\}}Z_T\right]=0.
\]
\end{enumerate}
\end{lemma}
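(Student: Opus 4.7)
My plan is to split the three-way equivalence into the implications $(2)\Rightarrow(1)$, $(1)\Leftrightarrow(3)$, and the harder $(3)\Rightarrow(2)$. Before that, I would exploit the assumption $g(\cdot)\geq c>0$ from~\ref{cost_functions}, which forces $Z_t\geq e^{ct}$ and hence $Z_\tau=+\infty$ on $\{\tau=\infty\}$; combined with the hypothesis $\mathbb{E}_x[Z_\tau]<\infty$ this yields $\mathbb{P}_x(\tau<\infty)=1$. In particular $Z_{\tau\wedge T}\to Z_\tau$ $\mathbb{P}_x$-a.s.\ as $T\to\infty$, so the right-hand side of~\ref{p:lm:convention:1} collapses to $\mathbb{E}_x[Z_\tau]$.

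The equivalence $(1)\Leftrightarrow(3)$ rests on the elementary decomposition
\[
\mathbb{E}_x[Z_{\tau\wedge T}] = \mathbb{E}_x[1_{\{\tau\leq T\}}Z_\tau] + \mathbb{E}_x[1_{\{\tau>T\}}Z_T].
\]
By monotone convergence the first summand increases to $\mathbb{E}_x[Z_\tau]$, and therefore
\[
\liminf_{T\to\infty}\mathbb{E}_x[Z_{\tau\wedge T}] = \mathbb{E}_x[Z_\tau] + \liminf_{T\to\infty}\mathbb{E}_x[1_{\{\tau>T\}}Z_T],
\]
which, in view of the reduction above, gives $(1)\Leftrightarrow(3)$. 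The direction $(2)\Rightarrow(1)$ is then a direct application of Vitali's theorem: uniform integrability of $\{Z_{\tau\wedge T}\}$ together with $Z_{\tau\wedge T}\to Z_\tau$ a.s.\ upgrades to $L^1$-convergence, and in particular to the liminf equality.

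The principal step is $(3)\Rightarrow(2)$. Splitting
\[
\mathbb{E}_x\bigl[1_{\{Z_{\tau\wedge T}\geq n\}}Z_{\tau\wedge T}\bigr] = \mathbb{E}_x\bigl[1_{\{\tau\leq T,\,Z_\tau\geq n\}}Z_\tau\bigr] + \mathbb{E}_x\bigl[1_{\{\tau>T,\,Z_T\geq n\}}Z_T\bigr],
\]
the first term is dominated uniformly in $T$ by $\mathbb{E}_x[1_{\{Z_\tau\geq n\}}Z_\tau]\to 0$ by integrability of $Z_\tau$. Controlling the second term uniformly in $T$ is the main obstacle, because (3) only furnishes $\liminf$-type smallness whereas uniform integrability demands a uniform-in-$T$ tail bound. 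I would resolve this by picking, for each $\varepsilon>0$, a pivot $T_0$ with $\mathbb{E}_x[1_{\{\tau>T_0\}}Z_{T_0}]<\varepsilon$ via (3), using Assumption~\ref{IF} to dominate $Z_{\tau\wedge T}$ on $[0,T_0]$ by the integrable variable $\exp(\|g\|_\infty T_0)\,\zeta_{T_0}$, and invoking the strong Markov property on $\{\tau>T_0\}$ to propagate the smallness at the pivot to all $T\geq T_0$. The Markovian transport of the pivot estimate is the technical bottleneck of the whole argument.
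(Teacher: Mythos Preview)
The paper's proof is considerably shorter than your plan. It treats the equivalence $(1)\Leftrightarrow(2)$ as a direct consequence of a standard textbook result (Theorem~16.14 in \cite{Bil1995}) and then establishes $(1)\Leftrightarrow(3)$ via exactly the decomposition you write down, noting that the first summand increases to $\mathbb{E}_x[Z_\tau]$ by monotone convergence and that $\mathbb{P}_x[\tau<\infty]=1$ follows from $g(\cdot)\geq c>0$ and the integrability hypothesis. No direction is singled out as a ``principal step''; in particular, the paper does not attempt a direct proof of $(3)\Rightarrow(2)$.

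Your route for $(2)\Rightarrow(1)$ and $(1)\Leftrightarrow(3)$ matches the paper. Where you diverge is in elevating $(3)\Rightarrow(2)$ to the main obstacle and proposing a pivot-plus-strong-Markov argument. That argument has a genuine gap. The stopping time $\tau$ in the lemma is \emph{arbitrary}---it is not assumed to be a hitting time or even a functional of $X$---so on $\{\tau>T_0\}$ the event $\{\tau>T\}$ for $T>T_0$ is not determined by the post-$T_0$ path alone. Consequently, applying the Markov property at time $T_0$ does not decouple $1_{\{\tau>T\}}$ from the pre-$T_0$ information, and there is no way to ``transport the pivot estimate'' $\mathbb{E}_x[1_{\{\tau>T_0\}}Z_{T_0}]<\varepsilon$ forward to control $\mathbb{E}_x[1_{\{\tau>T\}}Z_T]$ for all $T\geq T_0$. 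Dropping the indicator $1_{\{\tau>T\}}$ to make the Markov property applicable leaves you with $\mathbb{E}_{X_{T_0}}[e^{\int_0^{T-T_0}g+G(X_{T-T_0})}]$, which is unbounded in general. So the technical bottleneck you identify is real, but the tool you reach for does not resolve it; the paper's route via the standard Vitali-type equivalence bypasses the issue entirely.
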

\begin{proof}
Note that the equivalence of~\eqref{p:lm:convention:1} and~\eqref{p:lm:convention:2} follows from the standard result; see e.g. Theorem 16.14 in~\cite{Bil1995} for details. Thus, it is enough to show that~\eqref{p:lm:convention:1} is equivalent to~\eqref{p:lm:convention:3}. Using the identity
\begin{equation}\label{eq:calc_conv_1}
\mathbb{E}_x\left[Z_{\tau\wedge T}\right] =\mathbb{E}_x\left[1_{\{\tau\leq T\}}Z_{\tau}\right]+\mathbb{E}_x\left[1_{\{\tau> T\}}Z_{T}\right], \quad T\geq 0,
\end{equation}
and noting that $T\mapsto 1_{\{\tau\leq T\}}Z_{\tau}$ is increasing, by monotone convergence theorem and quasi-left continuity of $(Z_t)$ we get 
\[\lim_{T\to\infty}\mathbb{E}_x\left[1_{\{\tau\leq T\}}Z_{\tau}\right]=\mathbb{E}_x\left[Z_{\tau}\right]=\mathbb{E}_x\left[\lim_{T\to \infty}Z_{\tau\wedge T}\right]<\infty;\]
note that $\mathbb{P}_x[\tau<\infty]=1$ as by the assumptions $\mathbb{E}_x\left[e^{c\tau}\right]\leq \mathbb{E}_x\left[e^{\int_0^{\tau} g(X_s)ds+G(X_{\tau})}\right]<\infty$. Thus, letting $T\to \infty$ in~\eqref{eq:calc_conv_1}, we conclude the proof.
\end{proof}

Now, observe that from~\eqref{eq:convention_1_2}, for any $x\in E$, we get 
\begin{equation}\label{eq:ineq_u_w}
u(x)\leq w(x),
\end{equation}
where $u$ and $w$ are given by~\eqref{eq:opt_stop_unb2} and~\eqref{eq:opt_stop_unb}, respectively. In the following lemma we show that $w$ may be seen as a value of the optimal stopping problem with infimum over the family of bounded stopping times. This provides an additional explanation for~\eqref{eq:ineq_u_w}.


\begin{lemma}\label{lm:w_bounded}
Let $w$ be given by~\eqref{eq:opt_stop_unb} and let $\mathcal{T}_b$ denote the family of bounded stopping times. Then, we get
\[
w(x)=\inf_{\tau \in \mathcal{T}_b}\ln\mathbb{E}_x\left[\exp\left(\int_{0}^{\tau} g(X_s)ds +G(X_{\tau})\right)\right].
\]
\end{lemma}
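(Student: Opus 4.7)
The plan is to prove the two inequalities separately; both are quite direct once we unpack the definitions.

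For the inequality $w(x) \leq \inf_{\tau\in\mathcal{T}_b} \ln\mathbb{E}_x[Z_\tau]$, I would fix an arbitrary bounded stopping time $\tau \in \mathcal{T}_b$ and pick $T_0$ with $\tau \leq T_0$ $\mathbb{P}_x$-a.s. Then for all $T\geq T_0$ we have $\tau\wedge T = \tau$ pointwise, so that $\mathbb{E}_x[Z_{\tau\wedge T}] = \mathbb{E}_x[Z_\tau]$ for all such $T$. Consequently, $\liminf_{T\to\infty}\ln\mathbb{E}_x[Z_{\tau\wedge T}] = \ln\mathbb{E}_x[Z_\tau]$, which is $\geq w(x)$ by definition. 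Taking the infimum over $\tau \in \mathcal{T}_b$ gives the desired inequality.

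For the reverse inequality $\inf_{\tau\in\mathcal{T}_b} \ln\mathbb{E}_x[Z_\tau] \leq w(x)$, I would fix an arbitrary stopping time $\tau$ (not necessarily bounded) and note that for each fixed $T \geq 0$ the truncated time $\tau\wedge T$ belongs to $\mathcal{T}_b$. Therefore
\[
\inf_{\sigma\in\mathcal{T}_b}\ln\mathbb{E}_x[Z_\sigma] \leq \ln\mathbb{E}_x[Z_{\tau\wedge T}], \quad T\geq 0.
\]
Taking $\liminf_{T\to\infty}$ on the right-hand side and using that $\ln$ is continuous and monotone on $(0,\infty]$ (so it commutes with $\liminf$), we obtain
\[
\inf_{\sigma\in\mathcal{T}_b}\ln\mathbb{E}_x[Z_\sigma] \leq \liminf_{T\to\infty}\ln\mathbb{E}_x[Z_{\tau\wedge T}].
\]
Taking the infimum over $\tau$ on the right yields $\inf_{\sigma\in\mathcal{T}_b}\ln\mathbb{E}_x[Z_\sigma]\leq w(x)$.

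There is no genuine obstacle here; the argument is essentially bookkeeping. The only small point of care is the commutation of $\ln$ with $\liminf$, which is valid because $\ln$ is a continuous non-decreasing map from $(0,\infty]$ to $(-\infty,\infty]$. Note that the inner expectation is bounded below by $e^{G(X_0)}>0$ via Jensen's inequality (or simply the nonnegativity of $g$), so no issue arises at the left endpoint either.
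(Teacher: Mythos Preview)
Your proof is correct and follows essentially the same approach as the paper: both directions rely on the observations that (i) for bounded $\tau$ one has $\tau\wedge T=\tau$ for large $T$, and (ii) for arbitrary $\tau$ the truncation $\tau\wedge T$ is bounded. Your second inequality is in fact slightly more streamlined than the paper's, which passes through an $\varepsilon$-optimal stopping time and a subsequence realising the $\liminf$; your direct argument avoids both. Two minor remarks: the commutation of $\ln$ with $\liminf$ is not actually needed (the left-hand side is independent of $T$), and the lower bound $e^{G(X_0)}$ for $\mathbb{E}_x[Z_{\tau\wedge T}]$ is not quite right---the correct (and sufficient) observation is simply $Z_t\geq 1$ since $g,G\geq 0$.
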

\begin{proof}
First, note that using boundedness of $\tau \in \mathcal{T}_b$, we get
\begin{align*}
w(x)&\leq \inf_{\tau \in \mathcal{T}_b}\liminf_{T\to\infty}\ln\mathbb{E}_x\left[e^{\int_{0}^{\tau\wedge T} g(X_s)ds +G(X_{\tau\wedge T})}\right]\\
&=  \inf_{\tau \in \mathcal{T}_b}\ln\mathbb{E}_x\left[e^{\int_{0}^{\tau} g(X_s)ds +G(X_{\tau})}\right], \quad x\in E.
\end{align*}
Second, let $x\in E$, $\varepsilon>0$, and $\tau_{\varepsilon}$ be an $\varepsilon$-optimal stopping time for $w(x)$. Then, there exists a sequence $(T_n)\subset \mathbb{R}_+$ such that $T_n\to\infty$ as $n\to\infty$ and
\begin{align*}
\inf_{\tau \in \mathcal{T}_b}\ln\mathbb{E}_x\left[e^{\int_{0}^{\tau} g(X_s)ds +G(X_{\tau})}\right] &\leq \lim_{n\to\infty}\ln\mathbb{E}_x\left[e^{\int_{0}^{\tau_{\varepsilon}\wedge T_n} g(X_s)ds +G(X_{\tau_{\varepsilon}\wedge T_n})}\right]\\
& = \liminf_{T\to\infty}\ln\mathbb{E}_x\left[e^{\int_{0}^{\tau_{\varepsilon}\wedge T} g(X_s)ds +G(X_{\tau_{\varepsilon}\wedge T})}\right]\\
& \leq w(x) +\varepsilon.
\end{align*}
Thus, letting $\varepsilon\to 0$ we get $ \inf_{\tau \in \mathcal{T}_b}\ln\mathbb{E}_x\left[e^{\int_{0}^{\tau} g(X_s)ds +G(X_{\tau})}\right]\leq w(x)$, which concludes the proof.
\end{proof}

\section{Discrete time optimal stopping}\label{S:discrete}

In this section we consider a discrete-time version of the problems~\eqref{eq:opt_stop_unb2} and~\eqref{eq:opt_stop_unb}. By $X$ we denote a standard discrete-time Markov process with values in $E$ and for simplicity we write $X=(X_n)_{n\in \mathbb{N}}$, where $\mathbb{N}:=\{0,1,2,\ldots \}$ denotes the set of non-negative integers.
It should be noted that the results in this section do not require continuity assumptions from Section~\ref{S:setting}. 

By analogy to~\eqref{eq:opt_stop_unb2} and~\eqref{eq:opt_stop_unb}, we define
\begin{align}
u(x)&:=\inf_{\tau\in \mathcal{T}_0}\ln\mathbb{E}_x\left[\exp\left(\sum_{i=0}^{\tau-1} g(X_i)+G(X_{\tau})\right)\right], \quad x\in E;\label{eq:opt_stop_unb_disc}\\
w(x)&:=\inf_{\tau\in \mathcal{T}_0}\liminf_{n\to\infty}\ln\mathbb{E}_x\left[\exp\left(\sum_{i=0}^{\tau\wedge n-1} g(X_i)+G(X_{\tau\wedge n})\right)\right], \quad x\in E,\label{eq:opt_stop_unb_disc2}
\end{align}
where $\mathcal{T}_0$ denotes the family of stopping times with values in $\mathbb{N}$ and we follow the convention $\sum_{i=0}^{-1}(\cdot)=0$. 
Also, let us define the Bellman operator
\[
Sh(x):=e^{G(x)} \wedge e^{g(x)} \mathbb{E}_x[h(X_1)], \quad  x\in E,
\]
where $h:E\mapsto \mathbb{R}_+$ is a non-negative measurable function. In this section we characterise solutions to the Bellman equation, i.e. measurable functions $v:E\mapsto \mathbb{R}_+$ satisfying
\begin{equation}\label{eq:Bellman}
e^{v(x)}=S e^v (x), \quad x\in E.
\end{equation}
More explicitly, in Theorem~\ref{th:und_over_prob} we show that $u$ and $w$ are minimal and maximal solutions to~\eqref{eq:Bellman}, respectively.

We start with finding the minimal and maximal solutions to~\eqref{eq:Bellman}. Recalling non-negativity of the functions $g$ and $G$ and~\eqref{eq:ineq_u_w}, we get 
\[
0\leq u(x)\leq w(x)\leq G(x), \quad x\in E.
\]
Based on these inequalities, to get the extremal solutions to~\eqref{eq:Bellman} we iterate the lower and upper bounds for $u$ and $w$. Thus, we define recursively the families of functions
\begin{align}
\underline{w}_0(x) & :=0, &  \underline{w}_{n+1}(x) & :=\ln S e^{\underline{w}_n}(x), & \quad n\in \mathbb{N},\, x\in E;\label{eq:underline_w}\\
\overline{w}_0(x) & :=G(x), &  \overline{w}_{n+1}(x) & :=\ln S e^{\overline{w}_n}(x), & \quad n\in \mathbb{N},\, x\in E.\label{eq:overline_w}
\end{align}
In the following proposition we show the probabilistic characterisation of the sequences $(\underline{w}_n)$ and $(\overline{w}_n)$. The proof is similar to the proof of Proposition 3 from~\cite{JelPitSte2019a}, where $G$ is assumed to be bounded from above, and therefore is omitted for brevity.

\begin{proposition}\label{pr:disc_und_over}
Let the sequences of functions $(\underline w_n)$ and $(\overline w_n)$ be given by~\eqref{eq:underline_w} and~\eqref{eq:overline_w}, respectively. Then,
\begin{enumerate}
\item For any $x\in E$, the sequence $(\underline w_{n}(x))$ is non-decreasing. Moreover, we get 
\[
e^{\underline w_{n}(x)}=\inf_{\tau\leq n} \E_x\left[e^{\sum_{i=0}^{\tau-1} g(X_i)+1_{\{\tau<n\}}G(X_\tau) }\right], \quad n\in \mathbb{N}, \, x\in E,
\]
and the optimal stopping time for $\underline w_{n}$ is given by
\begin{equation}\label{eq3'}
\underline \tau_n:=\min\left\{i\geq 0: \underline w_{n-i}(X_i)=G(X_i)\right\}\wedge n.
\end{equation}

\item For any $x\in E$,  the sequence $(\overline w_{n}(x))$ is non-increasing. Moreover we get 
\[
e^{\overline w_{n}(x)}=\inf_{\tau\leq n} \E_x\left[e^{\sum_{i=0}^{\tau-1} g(X_i)+G(X_\tau) }\right], \quad n\in \mathbb{N}, \, x\in E,
\]
and the optimal stopping time for $\overline w_{n}$ is given by
\begin{equation}\label{eq3''}
\overline \tau_n:=\min\left\{i\geq 0: \overline w_{n-i}(X_i)=G(X_i)\right\}.
\end{equation}
\end{enumerate}
\end{proposition}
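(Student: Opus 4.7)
Both statements are proved simultaneously by induction on $n$, using the Markov property to unfold the Bellman operator $S$. The base cases are immediate: for $n=0$ the only admissible stopping time is $\tau = 0$, giving $e^{\underline{w}_0(x)} = 1$ (the indicator $1_{\{0 < 0\}}$ kills the terminal cost) and $e^{\overline{w}_0(x)} = e^{G(x)}$. For the inductive step I would split the infimum over $\tau \leq n+1$ according to whether one stops immediately (contributing $e^{G(x)}$) or continues for at least one step. In the latter case, using the Markov property at time $1$ and writing $\tau = 1 + \tau'$ with $\tau' \leq n$, the inductive hypothesis identifies the optimal remaining value as $e^{\underline{w}_n(X_1)}$ (respectively $e^{\overline{w}_n(X_1)}$). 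Taking the minimum of the two alternatives reproduces exactly the definition of $S$, so
\[
\inf_{\tau \leq n+1} \E_x\!\left[e^{\sum_{i=0}^{\tau-1} g(X_i) + 1_{\{\tau < n+1\}} G(X_\tau)}\right] = e^{G(x)} \wedge e^{g(x)} \E_x\!\left[e^{\underline{w}_n(X_1)}\right] = e^{\underline{w}_{n+1}(x)},
\]
and the analogous identity holds for $\overline{w}_{n+1}$.

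The monotonicity claims come essentially for free from the induction. For the lower sequence, $\underline{w}_1(x) = \ln(e^{G(x)} \wedge e^{g(x)}) \geq 0 = \underline{w}_0(x)$ since $g, G \geq 0$; since $S$ is monotone in its argument, $\underline{w}_{n+1} \geq \underline{w}_n$ propagates $\underline{w}_{n+2} \geq \underline{w}_{n+1}$. For the upper sequence, $\overline{w}_1(x) \leq G(x) = \overline{w}_0(x)$ because $e^{G(x)} \wedge (\cdot) \leq e^{G(x)}$, and again monotonicity of $S$ propagates.

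To verify optimality of $\underline{\tau}_n$ and $\overline{\tau}_n$, I would argue by backward induction along the trajectory. By definition of $\underline{\tau}_n$, at each $i < \underline{\tau}_n$ one has $\underline{w}_{n-i}(X_i) < G(X_i)$, so the minimum in $e^{\underline{w}_{n-i}(X_i)} = e^{G(X_i)} \wedge e^{g(X_i)} \E_{X_i}[e^{\underline{w}_{n-i-1}(X_{i+1})}]$ is attained by the ``continue'' branch; at $i = \underline{\tau}_n$ either the branches tie (stopping is optimal) or $i = n$ (forced). Unwinding this observation via the tower property shows that $\underline{\tau}_n$ attains the infimum. For $\overline{\tau}_n$ the argument is identical; note that since $\overline{w}_0 = G$, one automatically has $\overline{\tau}_n \leq n$, so no ``$\wedge n$'' is needed in its definition.

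The main technical point to watch is integrability: with $G$ possibly unbounded, one must ensure $\E_x[e^{\underline{w}_n(X_1)}]$ and $\E_x[e^{\overline{w}_n(X_1)}]$ are finite at every induction step. For $\underline{w}_n$ this is not an issue because the trivial choice $\tau = n$ (which incurs no terminal cost in this formulation) yields the a priori bound $\underline{w}_n \leq n\|g\|_\infty$, so all expectations are finite. For $\overline{w}_n$ one needs an integrability hypothesis analogous to \eqref{IF} — namely $\E_x[e^{G(X_n)}] < \infty$ for every $n$ — which should be built into the discrete-time setting for the formulas to be meaningful.
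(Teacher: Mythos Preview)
Your proof plan is correct and follows the standard approach; the paper itself omits the proof, referring instead to Proposition~3 of \cite{JelPitSte2019a} (the bounded-$G$ case) and noting that the argument carries over unchanged. What you outline---induction on $n$, the one-step decomposition via the Markov property, monotonicity of $S$ to propagate the ordering, and the backward-induction verification that $\underline{\tau}_n$, $\overline{\tau}_n$ realise the infimum---is exactly that argument.

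One small remark on your integrability caveat for $\overline{w}_n$: you do not actually need to impose a separate discrete-time analogue of \eqref{IF}. The Bellman recursion $e^{\overline{w}_{n+1}(x)} = e^{G(x)} \wedge e^{g(x)} \E_x[e^{\overline{w}_n(X_1)}]$ is well-defined in $[0,\infty]$, and the minimum with $e^{G(x)}$ keeps $\overline{w}_{n+1}(x)$ finite regardless of whether $\E_x[e^{\overline{w}_n(X_1)}]$ is. On the probabilistic side, the choice $\tau = 0$ always yields $e^{G(x)} < \infty$, so the infimum is finite, and the inductive identification goes through in the extended nonnegative reals without extra hypotheses. This is consistent with the paper's remark that the discrete-time section requires none of the standing assumptions beyond \eqref{cost_functions}.
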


Based on Proposition~\ref{pr:disc_und_over} we may define
\begin{equation}\label{eq:und_over_w}
\underline{w}(x):=\lim_{n\to\infty}\underline{w}_n(x), \quad \text{and} \quad \overline{w}(x):=\lim_{n\to\infty}\overline{w}_n(x), \quad x\in E.
\end{equation}
Using monotone convergence theorem we get that both $\underline{w}$ and $\overline{w}$ satisfy the Bellman equation~\eqref{eq:Bellman}. Also, for any measurable function $v$ solving~\eqref{eq:Bellman} and satisfying $0\leq v(x)\leq G(x)$, we iteratively get $\underline{w}_n(x)\leq v(x)\leq \overline{w}_n(x)$, $x\in E$, and consequently 
\begin{equation}\label{eq:Bellman_bounds}
\underline{w}(x)\leq v(x)\leq  \overline{w}(x), \quad x\in E.
\end{equation}
Thus, the maps $\underline{w}$ and $\overline{w}$ are minimal and maximal solutions to the Bellman equation~\eqref{eq:Bellman}, respectively. For bounded $G$ one may show that $\underline{w}\equiv \overline{w}$; see Proposition 5 and Corollary 6 in~\cite{JelPitSte2019a} for details. However, for unbounded $G$ this may no longer be true; see Example~\ref{ex:3}. Thus, it is interesting to characterise the structure of solutions to~\eqref{eq:Bellman}. We start with the following lemma giving a martingale characterisation of solutions to the Bellman equation.

\begin{lemma}\label{lm:bellman1}
Let $v$ be a non-negative measurable solution to~\eqref{eq:Bellman} and let $\tau_v:=\inf\{n\in\mathbb{N}: v(X_n)\geq G(X_n)\}$. Define the process
\begin{equation}\label{eq:z_n}
z_v(n):=\exp\left(\sum_{i=0}^{n-1} g(X_i)+v(X_n)\right), \quad n\in \mathbb{N}.
\end{equation}
Then, for any stopping time $\tau$ we get that $(z_v(\tau \wedge n))$, $n\in \mathbb{N}$, is a submartingale. Also, $(z_v(\tau_v \wedge n))$, $n\in \mathbb{N}$, is a martingale.
\end{lemma}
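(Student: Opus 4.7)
The plan is to exploit the Bellman equation directly at the per-step level, where $e^{v(x)} = e^{G(x)} \wedge e^{g(x)}\mathbb{E}_x[e^{v(X_1)}]$ immediately yields the inequality $e^{v(x)} \leq e^{g(x)}\mathbb{E}_x[e^{v(X_1)}]$ for every $x$, together with the dichotomy that this inequality is an equality whenever $v(x) < G(x)$.

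First I would establish the submartingale property for the un-stopped process $(z_v(n))$. Note that $e^{v(x)} \leq e^{G(x)}$ gives $v \leq G$, so the integrability $\mathbb{E}_x[z_v(n)] < \infty$ reduces to the standing integrability of $\exp(\sum_{i=0}^{n-1} g(X_i) + G(X_n))$, which follows from the boundedness of $g$ together with the integrability hypothesis used throughout this section. Writing $A_n := \exp(\sum_{i=0}^{n-1} g(X_i))$, which is $\mathcal{F}_n$-measurable, the Markov property gives
\[
\mathbb{E}[z_v(n+1)\mid\mathcal{F}_n] = A_n\, e^{g(X_n)}\mathbb{E}_{X_n}[e^{v(X_1)}] \geq A_n\, e^{v(X_n)} = z_v(n),
\]
where the inequality is the Bellman inequality applied at $x = X_n$. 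Hence $(z_v(n))$ is a submartingale. The submartingale property of $(z_v(\tau \wedge n))$ for an arbitrary stopping time $\tau$ then follows from the standard fact that a submartingale stopped at a stopping time is again a submartingale.

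Second, I would refine the above computation on the event $\{\tau_v > n\}$, which, crucially, belongs to $\mathcal{F}_n$. By the definition of $\tau_v$ we have $v(X_n) < G(X_n)$ on this event, and therefore the minimum in the Bellman equation must be attained by the second argument, yielding the \emph{equality}
\[
e^{v(X_n)} = e^{g(X_n)}\mathbb{E}_{X_n}[e^{v(X_1)}] \quad \text{on } \{\tau_v > n\}.
\]
Splitting according to $\{\tau_v \leq n\}$ (on which $z_v(\tau_v \wedge (n+1)) = z_v(\tau_v \wedge n)$ is already $\mathcal{F}_n$-measurable) and $\{\tau_v > n\}$ (on which $\tau_v \wedge (n+1) = n+1$, so $z_v(\tau_v \wedge (n+1)) = z_v(n+1)$), the same Markov-property calculation as before now gives equality rather than inequality, producing $\mathbb{E}[z_v(\tau_v \wedge (n+1)) \mid \mathcal{F}_n] = z_v(\tau_v \wedge n)$.

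I do not expect any serious obstacle; the only point requiring minor care is the case split on $\{\tau_v \leq n\}$ vs.\ $\{\tau_v > n\}$, and in particular the observation that $\{\tau_v > n\} \in \mathcal{F}_n$ so that the factor $1_{\{\tau_v > n\}}$ can be pulled outside the conditional expectation before the Bellman equality is invoked. Integrability of $z_v(\tau_v \wedge n)$ (required for the martingale property to be meaningful) again follows from $v \leq G$ and the discrete-time analogue of Assumption~\eqref{IF}, which is tacitly in force.
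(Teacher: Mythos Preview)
Your proposal is correct and follows essentially the same approach as the paper: the paper likewise derives the submartingale inequality $\mathbb{E}_x[z_v(n+1)\mid\mathcal{F}_n]\geq z_v(n)$ from the Bellman inequality $e^{v(x)}\leq e^{g(x)}\mathbb{E}_x[e^{v(X_1)}]$ via the Markov property, invokes Doob's optional stopping to pass to $(z_v(\tau\wedge n))$, and for the martingale part performs exactly your case split on $\{\tau_v\leq n\}$ versus $\{\tau_v>n\}$, using that on the latter event the Bellman minimum is attained by the continuation term. Your explicit remarks on integrability (via $v\leq G$) are slightly more careful than what the paper spells out, but otherwise the arguments coincide.
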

\begin{proof}
First, using the inequality $e^{g(x)}\mathbb{E}_x\left[e^{v(X_1)}\right]\geq e^{v(x)}$, $x\in E$, and Markov property, for any $x\in E$ and $n\in \mathbb{N}$, we get
\[
\mathbb{E}_x\left[z_v(n+1)|\mathcal{F}_n\right]=e^{\sum_{i=0}^{n-1} g(X_i)} e^{g(X_n)}\mathbb{E}_{X_n}\left[e^{v(X_1)}\right]\geq z_v(n)
\]
and the process $(z_v(n))$, $n\in \mathbb{N}$, is a submartingale. Thus, using Doob optional stopping theorem, we get that for any stopping time $\tau$ the process $(z_v(\tau \wedge n))$, $n\in \mathbb{N}$, is also a submartingale.

Second, note that on the set $\{\tau_v>n\}$, we get $e^{v(X_n)}=e^{g(X_n)}\mathbb{E}_{X_n}\left[e^{v(X_1)}\right]$. Thus, for any $x\in E$ and $n\in \mathbb{N}$, we get
\begin{align*} 
\mathbb{E}_x\left[z_v(\tau_v\wedge (n+1))|\mathcal{F}_n \right] & = 1_{\{\tau_v\leq n\}} z_v(\tau_v) + 1_{\{\tau_v>n\}}e^{\sum_{i=0}^{n} g(X_i)}\bE_x\left[e^{v(X_{n+1})}|\mathcal{F}_n\right]  \\
& = 1_{\{\tau_v\leq n\}}  z_v(\tau_v) +1_{\{\tau_v>n\}}e^{\sum_{i=0}^{\tau_v\wedge n} g(X_i)}\mathbb{E}_{X_n}\left[e^{v(X_1)}\right]\\
& = 1_{\{\tau_v\leq n\}}  z_v(\tau_v\wedge n) +1_{\{\tau_v>n\}}e^{\sum_{i=0}^{\tau_v\wedge n-1} g(X_i)}e^{v(X_{\tau_v\wedge n})}\\
& = z_v(\tau_v\wedge n),
\end{align*}
which concludes the proof.
\end{proof}

Now we show that the minimal and maximal solutions to the Bellman equation~\eqref{eq:Bellman} coincide with the value functions given by~\eqref{eq:opt_stop_unb_disc} and~\eqref{eq:opt_stop_unb_disc2}.

\begin{theorem}\label{th:und_over_prob}
Let the maps $u$ and $w$ be given by~\eqref{eq:opt_stop_unb_disc} and~\eqref{eq:opt_stop_unb_disc2}, respectively. Then,
\begin{enumerate}
\item\label{p:th:und_over_prob:1} We get $ u\equiv\underline{w}$ and $w\equiv \overline{w}$, where the maps $\underline{w}$ and $\overline{w}$ are given by~\eqref{eq:und_over_w};
\item\label{p:th:und_over_prob:2} The functions $u$ and $w$ are solutions to~\eqref{eq:Bellman};
\item\label{p:th:und_over_prob:3} For any solution $v$ to the Bellman equation~\eqref{eq:Bellman} satisfying $0\leq v(\cdot)\leq G(\cdot)$ we get $u(\cdot)\leq v(\cdot)\leq w(\cdot)$.
\end{enumerate}
\end{theorem}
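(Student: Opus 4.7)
The plan is to establish part~\eqref{p:th:und_over_prob:1}, after which~\eqref{p:th:und_over_prob:2} and~\eqref{p:th:und_over_prob:3} follow immediately from the two facts already recorded between Proposition~\ref{pr:disc_und_over} and Lemma~\ref{lm:bellman1}: that $\underline{w}$ and $\overline{w}$ both solve~\eqref{eq:Bellman}, and that every non-negative solution $v\le G$ of~\eqref{eq:Bellman} is squeezed as in~\eqref{eq:Bellman_bounds}. For~\eqref{p:th:und_over_prob:1} I will prove the two double inequalities $\underline{w}\le u\le\underline{w}$ and $\overline{w}\le w\le\overline{w}$.

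For $\underline{w}\le u$, fix $\tau\in\cT_0$ and use the truncation $\tau\wedge n$ as a candidate in the variational formula for $\underline{w}_n$ from Proposition~\ref{pr:disc_und_over}. Since $\{\tau\wedge n<n\}=\{\tau<n\}$ and $\sum_{i=0}^{\tau\wedge n-1} g(X_i)\le\sum_{i=0}^{\tau-1} g(X_i)$, this yields
\[
e^{\underline{w}_n(x)}\le \E_x\!\left[\exp\!\Big(\sum_{i=0}^{\tau-1} g(X_i)+1_{\{\tau<n\}}G(X_\tau)\Big)\right].
\]
As $\tau<\infty$ $\bP_x$-a.s., $1_{\{\tau<n\}}\uparrow 1$, and monotone convergence on the right together with the monotonicity of $(\underline{w}_n)$ on the left give $e^{\underline{w}(x)}\le\E_x[\exp(\sum_{i=0}^{\tau-1}g+G(X_\tau))]$; taking $\inf_\tau$ yields $\underline{w}\le u$. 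For the reverse inequality $u\le\underline{w}$, set $\tau^{*}:=\inf\{n\in\bN:\underline{w}(X_n)=G(X_n)\}$. Since $\underline{w}$ solves~\eqref{eq:Bellman}, Lemma~\ref{lm:bellman1} gives that $(z_{\underline{w}}(\tau^{*}\wedge n))_n$ is a martingale, whence $\E_x[z_{\underline{w}}(\tau^{*}\wedge n)]=e^{\underline{w}(x)}$ for every $n$. Using $g\ge c>0$ and $\underline{w}\ge 0$, I have the pointwise bound $z_{\underline{w}}(\tau^{*}\wedge n)\ge e^{c(\tau^{*}\wedge n)}$, so Fatou gives $\E_x[e^{c\tau^{*}}]\le e^{\underline{w}(x)}\le e^{G(x)}<\infty$, forcing $\tau^{*}<\infty$ a.s. (this is the place where $\underline{w}\le G$, inherited from $\underline{w}_n\le G$ for every $n$, is used). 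On $\{\tau^{*}<\infty\}$ we have $z_{\underline{w}}(\tau^{*}\wedge n)\to z_{\underline{w}}(\tau^{*})=e^{\sum_{i=0}^{\tau^{*}-1}g+G(X_{\tau^{*}})}$ by the definition of $\tau^{*}$, and a second application of Fatou yields $\E_x[\exp(\sum_{i=0}^{\tau^{*}-1}g+G(X_{\tau^{*}}))]\le e^{\underline{w}(x)}$, hence $u(x)\le\underline{w}(x)$.

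The identity $w=\overline{w}$ is handled symmetrically and is simpler, since in~\eqref{eq:opt_stop_unb_disc2} any bounded stopping time $\tau'\le n$ satisfies $\tau'\wedge m=\tau'$ for $m\ge n$. This collapses the $\liminf_m$ in the definition of $w$ to $\E_x[\exp(\sum_{i=0}^{\tau'-1}g+G(X_{\tau'}))]$, so that minimising first over $\tau'\le n$ and then over $n$ gives $e^{w(x)}\le e^{\overline{w}_n(x)}\to e^{\overline{w}(x)}$, via Proposition~\ref{pr:disc_und_over}. For the converse $\overline{w}\le w$, Lemma~\ref{lm:bellman1} applied to $\overline{w}$ shows that $(z_{\overline{w}}(\tau\wedge n))_n$ is a submartingale for every $\tau$, so
\[
e^{\overline{w}(x)}\le\E_x[z_{\overline{w}}(\tau\wedge n)]\le \E_x\!\left[\exp\!\Big(\sum_{i=0}^{\tau\wedge n-1} g(X_i)+G(X_{\tau\wedge n})\Big)\right],
\]
using $\overline{w}\le G$; taking $\liminf_n$ and then $\inf_\tau$ delivers $\overline{w}\le w$.

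The main obstacle is the step $u\le\underline{w}$: one must manufacture a candidate stopping time directly from the Bellman solution $\underline{w}$, prove it is almost surely finite (which is where the positivity assumption $g\ge c>0$ is indispensable), and justify the passage to the limit $n\to\infty$ in the martingale identity using only Fatou's lemma, since in the unbounded regime we do not have uniform integrability of $(z_{\underline{w}}(\tau^{*}\wedge n))_n$. The remaining three inequalities are short comparisons built on the optimality of the finite-horizon values already catalogued in Proposition~\ref{pr:disc_und_over}.
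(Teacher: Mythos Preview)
Your proof is correct and follows essentially the same route as the paper: reduce parts~\eqref{p:th:und_over_prob:2} and~\eqref{p:th:und_over_prob:3} to~\eqref{p:th:und_over_prob:1} via~\eqref{eq:Bellman_bounds}, and establish the four inequalities from Proposition~\ref{pr:disc_und_over} and Lemma~\ref{lm:bellman1}, with the crucial step $u\le\underline{w}$ handled exactly as in the paper (martingale identity for $z_{\underline{w}}(\tau^{*}\wedge n)$, a.s.\ finiteness of $\tau^{*}$ from $g\ge c>0$, then Fatou). The only cosmetic difference is that for $\overline{w}\le w$ you invoke the submartingale inequality from Lemma~\ref{lm:bellman1}, whereas the paper plugs $\hat\tau\wedge n$ directly into the variational formula for $\overline{w}_n$ from Proposition~\ref{pr:disc_und_over}; both arguments are one line.
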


\begin{proof} Recalling~\eqref{eq:Bellman_bounds} and the successive discussion we get that~\eqref{p:th:und_over_prob:2} and~\eqref{p:th:und_over_prob:3} follow directly from~\eqref{p:th:und_over_prob:1}. Thus, it is enough to show~\eqref{p:th:und_over_prob:1}. For transparency, we split the rest of the proof into two parts: (1) proof of $u\equiv \underline{w}$; (2) proof of $w \equiv \overline{w}$.

\medskip
\noindent
\textit{Part 1.} We show that $u\equiv\underline{w} $. Recalling $\underline{w}_n$ from~\eqref{eq:underline_w} and Proposition~\ref{pr:disc_und_over}, for any $n\in \mathbb{N}$ and $x\in E$, we get
\begin{align*}
e^{\underline{w}_n(x)}&=\inf_{\tau\in \mathcal{T}_0}\mathbb{E}_x\left[e^{\sum_{i=0}^{\tau\wedge n-1} g(X_i)+1_{\{ \tau<n\}}G(X_{\tau})}\right]\\
& \leq \inf_{\tau\in \mathcal{T}_0}\mathbb{E}_x\left[e^{\sum_{i=0}^{\tau-1} g(X_i)+G(X_{\tau})}\right]=e^{u(x)},
\end{align*}
where the inequality follows from non-negativity of $g$ and $G$. Letting $n\to\infty$ we get $\underline{w} \leq u $. Now, let us define 
\begin{align}
\underline{z}(n)&:=\exp\left(\sum_{i=0}^{n-1} g(X_i)+\underline{w}(X_n)\right), \quad n\in \mathbb{N};\label{eq:th:und_over_prob:mart}\\
\underline{\tau}& :=\inf\{n\in\mathbb{N}: \underline{w}(X_n)\geq G(X_n)\};
\end{align}
and note that by Lemma~\ref{lm:bellman1} the process $(\underline{z}(\underline{\tau}\wedge n))$, $n\in \mathbb{N}$, is a martingale. Also, recalling that $g(\cdot)\geq c>0$ and $\underline{w}(\cdot)\geq 0$, and using Fatou Lemma, for any $x\in E$, we get
\begin{align*}
\mathbb{E}_x\left[e^{\underline\tau c }\right] &= \mathbb{E}_x\left[\liminf_{n\to\infty}e^{(\underline\tau\wedge n) c }\right]  \\
& \leq \liminf_{n\to\infty}\mathbb{E}_x\left[e^{\sum_{i=0}^{\underline\tau\wedge n-1} g(X_i)+\underline{w}(X_{\underline\tau\wedge n})}\right]\\
& =\mathbb{E}_x\left[ \underline{z}(0)\right]=e^{\underline{w}(x)}\leq e^{G(x)}<\infty.
\end{align*}
In particular, we get $\mathbb{P}_x[\underline{\tau}<\infty]=1$. Thus, noting that $\underline{w}(X_{\underline\tau})=G(X_{\underline\tau})$, we get
\begin{align}\label{eq:pr:und_over_prob:1}
e^{u(x)}&\leq \mathbb{E}_x\left[e^{\sum_{i=0}^{\underline\tau-1} g(X_i)+G(X_{\underline\tau})}\right] \nonumber\\
&=\mathbb{E}_x\left[e^{\sum_{i=0}^{\underline\tau-1} g(X_i)+\underline{w}(X_{\underline\tau})}\right]\nonumber\\
&=\mathbb{E}_x\left[\liminf_{n\to\infty} e^{\sum_{i=0}^{\underline\tau\wedge n-1} g(X_i)+\underline{w}(X_{\underline\tau\wedge n})}\right]\nonumber\\
&\leq \liminf_{n\to\infty} \mathbb{E}_x\left[e^{\sum_{i=0}^{\underline\tau\wedge n-1} g(X_i)+\underline{w}(X_{\underline\tau\wedge n})}\right] =\mathbb{E}_x\left[ \underline{z}(0)\right]= e^{\underline{w}(x)},
\end{align}
hence $u\equiv \underline{w}$, which concludes the proof of this part.

\medskip
\noindent
\textit{Part 2.}
We show that $ w\equiv\overline{w}$. Recalling Proposition~\ref{pr:disc_und_over} and the maps $\overline{w}_k$ from~\eqref{eq:overline_w}, for any $k\in \mathbb{N}$ and $x\in E$, we get
\begin{align*}
e^{w(x)}&\leq \inf_{\tau\leq k}\liminf_{n\to\infty}\mathbb{E}_x\left[e^{\sum_{i=0}^{\tau\wedge n-1} g(X_i)+G(X_{\tau\wedge n})}\right]\\
& = \inf_{\tau\leq k}\mathbb{E}_x\left[e^{\sum_{i=0}^{\tau-1} g(X_i)+G(X_{\tau})}\right]=e^{\overline{w}_k(x)}.
\end{align*}
Thus, letting $k\to\infty$, we get $w\leq \overline{w}$. Also, for any $n\in \mathbb{N}$ and $\hat\tau\in \mathcal{T}_0$ we get
\[
\inf_{\tau \leq n}  \E_x\left[e^{\sum_{i=0}^{\tau-1} g(X_i)+G(X_\tau)}\right] \leq  \E_x\left[e^{\sum_{i=0}^{\hat\tau\wedge n-1} g(X_i)+G(X_{\hat\tau\wedge n})}\right].
\]
Thus, letting $n\to\infty$ and taking infimum over $\hat\tau\in \mathcal{T}_0$, we get $\overline{w}\leq w$, which concludes the proof.
\end{proof}

\begin{remark}
From Theorem~\ref{th:und_over_prob} we deduce that in the unbounded case the family of finite time horizon stopping problems may not converge to their infinite horizon version. More specifically, from Proposition~\ref{pr:disc_und_over} we get that the function $\overline{w}_n$ may be seen as a finite horizon counterpart of $u$, with stopping times bounded by $n\in \mathbb{N}$. Thus, one might conjecture that $\overline{w}_n$ converges to $u$ as $n\to\infty$. However, from Theorem~\ref{th:und_over_prob} we get $\overline{w}_n \to w$ as $n\to\infty$ and from Examples~\ref{ex:1} and~\ref{ex:3} we see that in general $u\neq w$. Also, note that Theorem~\ref{th:und_over_prob} provides a finite horizon approximation scheme for $u$; this can be done with the help of the family $\underline{w}_n$.
\end{remark}

From the proof of Theorem~\ref{th:und_over_prob} we get a useful corollary about the optimal stopping time for $u$.
\begin{corollary}\label{cor:optimal_u}
Let $u$ be given by~\eqref{eq:opt_stop_unb_disc}. Then, the stopping time 
\begin{equation}\label{eq:cor:optimal_u}
\underline{\tau}=\inf\{n\in\mathbb{N}: \underline{w}(X_n)\geq G(X_n)\}
\end{equation}
is optimal for $u$. Also, the process $(\underline{z}(n\wedge \underline{\tau}))$, $n\in \mathbb{N}$, with $\underline{z}$ given by~\eqref{eq:th:und_over_prob:mart}, is a uniformly integrable martingale. 
\end{corollary}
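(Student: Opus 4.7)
The plan is to read off both statements from the computations already performed in Part 1 of the proof of Theorem~\ref{th:und_over_prob}, since essentially all the work has been done there.

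First, I would observe that by Theorem~\ref{th:und_over_prob}\eqref{p:th:und_over_prob:1} we have $u \equiv \underline{w}$, and that the chain of inequalities~\eqref{eq:pr:und_over_prob:1} established in the proof reads
\[
e^{u(x)} \leq \mathbb{E}_x\bigl[e^{\sum_{i=0}^{\underline{\tau}-1} g(X_i)+G(X_{\underline{\tau}})}\bigr] \leq e^{\underline{w}(x)}=e^{u(x)},
\]
so equality holds throughout. Since $\mathbb{P}_x[\underline{\tau}<\infty]=1$ (also shown in the proof of Theorem~\ref{th:und_over_prob}), this shows that $\underline{\tau}$ is admissible and achieves the infimum defining $u(x)$, which gives the optimality claim in~\eqref{eq:cor:optimal_u}.

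For the uniform integrability statement, I would invoke Lemma~\ref{lm:bellman1} applied to $v=\underline{w}$, which yields that $(\underline{z}(\underline{\tau}\wedge n))_{n\in\mathbb{N}}$ is a non-negative martingale with constant expectation $\underline{z}(0)=e^{\underline{w}(x)}$. On $\{\underline{\tau}<\infty\}$ (which has $\mathbb{P}_x$-probability one) the sequence $\underline{z}(\underline{\tau}\wedge n)$ converges almost surely to $\underline{z}(\underline{\tau})=e^{\sum_{i=0}^{\underline{\tau}-1}g(X_i)+G(X_{\underline{\tau}})}$. Combining the equality from the chain above with the constancy of expectations gives
\[
\lim_{n\to\infty}\mathbb{E}_x\bigl[\underline{z}(\underline{\tau}\wedge n)\bigr]=e^{\underline{w}(x)}=\mathbb{E}_x\bigl[\underline{z}(\underline{\tau})\bigr]<\infty.
\]
For a non-negative sequence converging almost surely, convergence of the $L^1$ norms to the norm of the almost sure limit is equivalent to $L^1$ convergence (by Scheffé's lemma or the standard Vitali-type argument), and this in turn is equivalent to uniform integrability. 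Hence $(\underline{z}(\underline{\tau}\wedge n))_{n\in\mathbb{N}}$ is uniformly integrable, completing the proof.

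No step here is a real obstacle; the only thing to be careful about is to note explicitly that one actually has $\mathbb{E}_x[\underline{z}(\underline{\tau})]=e^{\underline{w}(x)}$ (not merely $\leq$), which is precisely what the equality chain from~\eqref{eq:pr:und_over_prob:1} provides, and which is exactly the hypothesis needed to upgrade the a.s.\ martingale convergence into uniform integrability.
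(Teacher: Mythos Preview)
Your proposal is correct and follows essentially the same approach as the paper: optimality is read off from the equality chain~\eqref{eq:pr:und_over_prob:1}, the martingale property comes from Lemma~\ref{lm:bellman1}, and uniform integrability is deduced from~\eqref{eq:pr:und_over_prob:1} as well. The only difference is that you spell out the uniform integrability argument explicitly via Scheff\'e's lemma (a.s.\ convergence plus convergence of $L^1$-norms), whereas the paper simply states that uniform integrability follows from~\eqref{eq:pr:und_over_prob:1} without elaborating.
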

\begin{proof}
Optimality of $\underline{\tau}$ follows directly from~\eqref{eq:pr:und_over_prob:1}. Also, martingale property of $(\underline{z}(n\wedge \underline{\tau}))$, $n\in \mathbb{N}$, follows from Lemma~\ref{lm:bellman1}. Finally, uniform integrability follows from~\eqref{eq:pr:und_over_prob:1}.
\end{proof}

Now we formulate a sufficient condition for the identity $u\equiv w$.
To ease the notation, we define the process
\[
Z_n:=\exp\left(\sum_{i=0}^{ n-1}g(X_i)+G(X_{n})\right), \quad n\in \mathbb{N}.
\]

\begin{theorem}\label{th:suff_disc}
Let $u$ and $w$ be given by~\eqref{eq:opt_stop_unb_disc} and~\eqref{eq:opt_stop_unb_disc2}, respectively. Also, let $\underline{\tau}=\inf\{t\geq 0: \underline{w}(X_t)\geq G(X_t)\}$. If the process $(Z_{n\wedge \underline{\tau}})$, $n\geq 0$, is uniformly integrable, then we get $u\equiv w$.
\end{theorem}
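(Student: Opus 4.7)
The plan is to combine the inequality $u \leq w$ (which is the discrete analogue of \eqref{eq:ineq_u_w} and holds unconditionally) with a matching reverse inequality obtained by plugging in the candidate stopping time $\underline{\tau}$ and transferring the uniform integrability assumption into a convergence statement.

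First, I would note that the inequality $u \leq w$ follows immediately from $Z_{\tau} \leq \liminf_{n\to\infty} Z_{\tau \wedge n}$ and Fatou's Lemma, applied pointwise to each stopping time $\tau \in \mathcal{T}_0$. So the task reduces to proving $w \leq u$.

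For the reverse inequality, I would use $\underline{\tau}$ as a test stopping time in the definition \eqref{eq:opt_stop_unb_disc2} of $w$. By Corollary~\ref{cor:optimal_u}, $\underline{\tau}$ is optimal for $u$, so $e^{u(x)} = \mathbb{E}_x[Z_{\underline{\tau}}]$, and the same corollary (or the estimate in the proof of Theorem~\ref{th:und_over_prob} using $g(\cdot) \geq c > 0$) shows that $\mathbb{P}_x[\underline{\tau} < \infty] = 1$. Consequently $Z_{\underline{\tau} \wedge n} \to Z_{\underline{\tau}}$ $\mathbb{P}_x$-almost surely as $n \to \infty$. Combining this almost sure convergence with the assumed uniform integrability of $(Z_{n \wedge \underline{\tau}})$ yields $L^1$ convergence, hence
\[
\lim_{n\to\infty} \mathbb{E}_x\left[Z_{\underline{\tau} \wedge n}\right] = \mathbb{E}_x\left[Z_{\underline{\tau}}\right] = e^{u(x)}.
\]

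Plugging this into \eqref{eq:opt_stop_unb_disc2} gives
\[
e^{w(x)} \leq \liminf_{n\to\infty} \mathbb{E}_x\left[Z_{\underline{\tau} \wedge n}\right] = e^{u(x)},
\]
so $w \leq u$, and together with $u \leq w$ this gives $u \equiv w$. There is no real obstacle here; the only subtlety worth double-checking is the almost sure finiteness of $\underline{\tau}$, which is needed to pass from $Z_{\underline{\tau} \wedge n}$ to $Z_{\underline{\tau}}$ in the limit, and this has already been established in the course of proving Theorem~\ref{th:und_over_prob} and recorded in Corollary~\ref{cor:optimal_u}.
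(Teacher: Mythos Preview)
Your proposal is correct and follows essentially the same approach as the paper: use $\underline{\tau}$ as a test stopping time in the definition of $w$, invoke Corollary~\ref{cor:optimal_u} for optimality of $\underline{\tau}$ (and the finiteness of $\underline{\tau}$ established in the proof of Theorem~\ref{th:und_over_prob}), then pass to the limit using uniform integrability to get $e^{w(x)}\leq e^{u(x)}$. The paper's proof is slightly terser but the logic is identical.
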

\begin{proof}
Recall that by Corollary~\ref{cor:optimal_u} the stopping time $\underline{\tau}$ is optimal for $u$. Thus, using uniform integrability of $(Z_{n\wedge \underline{\tau}})$, $n\geq 0$, for any $x\in E$, we get
\begin{align*}
e^{w(x)}\leq \lim_{n\to\infty} \E_x\left[e^{\sum_{i=0}^{\underline\tau\wedge n-1} g(X_i)+G(X_{\underline\tau\wedge n})}\right]&=\E_x\left[e^{\sum_{i=0}^{\underline\tau-1} g(X_i)+G(X_{\underline\tau})}\right]=e^{u(x)}.
\end{align*}
Recalling that we always get $u\leq w$, we conclude the proof.
\end{proof}

\begin{remark}\label{rm:lm:suff_disc:2}
By analogy to~\eqref{eq:cor:optimal_u}, let us define $\overline{\tau}:=\inf\{t\geq 0: \overline{w}(X_t)\geq G(X_t)\}$. Since $\underline{w}\leq \overline{w}$, we get $\overline{\tau}\leq \underline{\tau}$, where is given by~\eqref{eq:cor:optimal_u}. Based on the condition from Theorem~\ref{th:suff_disc} it is natural to ask whether uniform integrability of $(Z_{\overline{\tau}\wedge n})$ is also sufficient for $u\equiv w$. However, as discussed in Remark~\ref{rm:ex3}, this is not the case.
\end{remark}

\section{Continuous time optimal stopping}\label{S:continuous}
In this section, by analogy to~\eqref{eq:opt_stop_unb_disc} and~\eqref{eq:opt_stop_unb_disc2}, we consider the continuous time optimal stopping problems
\begin{align}
u(x)&:=\inf_{\tau}\ln\mathbb{E}_x\left[\exp\left(\int_{0}^{\tau} g(X_s)ds +G(X_{\tau})\right)\right], \quad x\in E;\label{eq:opt_stop_unb_cont}\\
w(x)&:=\inf_{\tau}\liminf_{T\to\infty}\ln\mathbb{E}_x\left[\exp\left(\int_{0}^{\tau\wedge T} g(X_s)ds+G(X_{\tau\wedge T})\right)\right], \quad x\in E.\label{eq:opt_stop_unb_cont2}
\end{align}
Assuming~\eqref{cost_functions}--\eqref{Feller}, we prove several regularity properties of the maps $u$ and $w$. Also, we show various approximation results, including finite time horizon limits. These results extend the analysis from~\cite{JelPitSte2019a} to the case when $G$ is unbounded from above.

First, by analogy to Proposition~\ref{pr:disc_und_over} we consider the finite time horizon optimal stopping problems. For any $T\geq 0$, let us define
\begin{align}
\underline{w}_T(x) & :=\inf_{\tau\leq T} \ln\E_x\left[e^{\int_0^\tau g(X_s)ds + 1_{\{\tau<T\}}G(X_\tau) }\right],\quad x\in E,\label{eq:w^t_approx_from_below}\\
\overline{w}_T(x) & :=\inf_{\tau\leq T} \ln \E_x\left[e^{\int_0^{\tau} g(X_s)ds+G(X_{\tau})}\right], \quad  x\in E.\label{eq:w^T_approx_from_above}
\end{align}
In Proposition~\ref{pr:finite_hor} we summarise the properties of the maps $(T,x)\mapsto \underline{w}_T(x)$ and $(T,x)\mapsto \overline{w}_T(x)$.  
The proof is deferred to Appendix~\ref{S:proofs}.

\begin{proposition}\label{pr:finite_hor}
Let the maps $(\underline w_T)$ and $(\overline w_T)$ be given by~\eqref{eq:w^t_approx_from_below} and~\eqref{eq:w^T_approx_from_above}, respectively. Then,
\begin{enumerate}
\item The map $(T,x)\mapsto \underline{w}_T(x)$ is jointly continuous and, for any $x\in E$, the map $T\mapsto \underline{w}_T(x)$ is non-decreasing. Also, for any $T\geq 0$, an optimal stopping time for $\underline w_T$ is given by
\begin{equation}\label{optstop}
\underline{\tau}_T:=\inf\left\{t\geq 0: \underline{w}_{T-t}(X_t)=G(X_t)\right\}\wedge T.
\end{equation}
Moreover, the process
\[
\underline{z}_T(t):=e^{\int_0^{t\wedge T} g(X_s) ds+\underline{w}_{T-t\wedge T}(X_{t\wedge T})}, \quad t\geq 0,
\]
is a submartingale and $(\underline{z}_T(t\wedge \underline{\tau}_T))$, $t\geq 0$, is a martingale.
\item The map $(T,x)\mapsto \overline{w}_T(x)$ is jointly continuous and, for any $x\in E$, the map $T\mapsto \overline{w}_T(x)$ is non-increasing. Also, for any $T\geq 0$, an optimal stopping time for $\overline w_T$ is given by
\begin{equation}\label{eq:prop:coninuity_w_T:optstop}
\overline{\tau}_T:=\inf\left\{t\geq 0:\overline{w}_{T-t}(X_t)=G(X_t)\right\}.
\end{equation}
Moreover, the process
\[
\overline{z}_T(t):=e^{\int_0^{t\wedge T} g(X_s) ds+\overline{w}_{T-t\wedge T}(X_{t\wedge T})}, \quad t\geq 0,
\]
is a submartingale and $(\overline{z}_T(t\wedge \overline{\tau}_T))$, $t\geq 0$, is a martingale.
\end{enumerate}
\end{proposition}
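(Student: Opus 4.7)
The plan is to establish both parts in parallel: monotonicity in $T$ follows by direct comparison of stopping times, while continuity and the (sub)martingale claims are obtained via dyadic time discretisation (reducing to Proposition~\ref{pr:disc_und_over}) and the dynamic programming principle. Monotonicity of $\overline{w}_T$ is immediate since the feasible set $\{\tau \leq T\}$ grows with $T$. For $\underline{w}_T$, given a near-optimal $\tau_2 \leq T_2$ for $\underline{w}_{T_2}$, the truncation $\tau_1 := \tau_2 \wedge T_1$ gives a pointwise smaller integrand (the running cost is non-decreasing in $\tau$, and the terminal contribution $1_{\{\tau_1 < T_1\}} G(X_{\tau_1})$ either matches $1_{\{\tau_2 < T_2\}} G(X_{\tau_2})$ on $\{\tau_2 < T_1\}$ or vanishes elsewhere against a non-negative quantity), so $\underline{w}_{T_1} \leq \underline{w}_{T_2}$.

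For continuity, I would work via dyadic discretisation. For each $n \in \mathbb{N}$ define $\overline{w}_T^{(n)}(x)$ and $\underline{w}_T^{(n)}(x)$ by restricting the infima in~\eqref{eq:w^T_approx_from_above} and~\eqref{eq:w^t_approx_from_below} to stopping times with values in $2^{-n}\mathbb{N}$, with $T \in 2^{-n}\mathbb{N}$. Applying Proposition~\ref{pr:disc_und_over} to the chain $(X_{k 2^{-n}})$ with the Feynman--Kac-type discrete Bellman operator
\[
S_n h(x) := e^{G(x)} \wedge \mathbb{E}_x\bigl[e^{\int_0^{2^{-n}} g(X_s)ds}\, h(X_{2^{-n}})\bigr]
\]
expresses each discretised value as an iterate of $S_n$ (started from $1$ for $\underline{w}^{(n)}$ and from $e^G$ for $\overline{w}^{(n)}$). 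Assumption~\eqref{Feller} ensures that $S_n$ preserves the class of continuous functions dominated by $e^G$, hence each discretised iterate is continuous in $x$. Density of dyadic stopping times, quasi-left continuity of $X$, and the uniform integrability supplied by Assumption~\eqref{IF} then yield pointwise convergence to $\underline{w}_T$ and $\overline{w}_T$; joint continuity in $(T, x)$ follows by combining this with the monotonicity in $T$ established above and a standard semi-continuity argument.

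Finally, the (sub)martingale claims and the optimality of the hitting times both follow from the dynamic programming principle
\[
e^{\overline{w}_T(x)} = \inf_{\sigma \leq s}\mathbb{E}_x\left[e^{\int_0^\sigma g(X_u)du}\bigl(1_{\{\sigma<s\}} e^{G(X_\sigma)} + 1_{\{\sigma=s\}} e^{\overline{w}_{T-s}(X_s)}\bigr)\right], \quad 0 \leq s \leq T,
\]
(analogously for $\underline{w}_T$), itself a consequence of the Markov property applied at time $s$. Specialising to $\sigma \equiv s$ gives the submartingale inequalities for $\overline{z}_T$ and $\underline{z}_T$; before the respective hitting times, the strict inequality $\overline{w}_{T-t}(X_t) < G(X_t)$ (resp.\ $\underline{w}_{T-t}(X_t) < G(X_t)$) forces the infimum in the DPP to be attained by continuation, upgrading the submartingale to a martingale on $[0, \overline{\tau}_T]$ (resp.\ $[0, \underline{\tau}_T]$). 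Optimality is then extracted by stopping the martingale at the hitting time and invoking the boundary identification (using $\overline{w}_0 \equiv G$ for $\overline{\tau}_T$ and $\underline{w}_0 \equiv 0$ for $\underline{\tau}_T$) together with the uniform integrability from Assumption~\eqref{IF}. The principal obstacle throughout is propagating continuity and uniform integrability through a Bellman iteration involving the unbounded function $G$; Assumptions~\eqref{Feller} and~\eqref{IF} are tailored precisely to handle this.
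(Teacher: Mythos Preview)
Your overall strategy tracks the paper closely in two respects: monotonicity via direct comparison is exactly the paper's argument (its Step~1), and your dyadic time discretisation is essentially the paper's Step~2, which also iterates the Bellman operator and uses Assumption~\eqref{Feller} to propagate continuity. That part is fine.

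The genuine gap is lower semicontinuity of $x\mapsto \underline{w}_T(x)$ (and of $x\mapsto \overline{w}_T(x)$). Your dyadic value functions $\underline{w}_T^{(n)}$ \emph{decrease} to $\underline{w}_T$ as $n\to\infty$ (larger class of admissible stopping times, smaller infimum), so the limit is only upper semicontinuous. You then write that ``joint continuity follows by combining this with the monotonicity in $T$ and a standard semi-continuity argument,'' but monotonicity in $T$ does not supply lower semicontinuity in $x$, and no standard argument does without an additional ingredient. The paper closes this gap by a separate approximation: it truncates $G$ to $G\wedge n$, obtaining bounded-cost finite-horizon problems $\underline{v}_T^n$ and $\overline{v}_T^n$ which are continuous by the bounded-$G$ theory of~\cite{JelPitSte2019a}, and which \emph{increase} to $\underline{w}_T$ and $\overline{w}_T$ respectively (Lemma~\ref{lm:fin1}). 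The increasing limit of continuous functions is lower semicontinuous, and together with the dyadic-induced upper semicontinuity this yields continuity in $x$. Only then does Dini's theorem, combined with your monotonicity in $T$, upgrade separate continuity to joint continuity. Your proposal is missing this $G$-truncation step entirely.

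A secondary issue is that you take the dynamic programming principle as given (``a consequence of the Markov property''), whereas a rigorous derivation here already presupposes enough regularity of the value function to carry out the measurable selection or $\varepsilon$-optimal patching argument. The paper avoids this circularity by establishing continuity first and then invoking Snell-envelope results of Fakeev (Lemma~\ref{lm:opt_fin}) to obtain the submartingale property, the optimality of $\underline{\tau}_T$, and the martingale identity directly. Your DPP route is morally equivalent but would need the continuity in hand before it can be made precise.
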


Based on Proposition~\ref{pr:finite_hor} we may define the limits
\begin{equation}\label{eq:w.both2}
\underline w(x):=\lim_{T\to\infty}\underline w_{T}(x)\quad\textrm{and}\quad\overline w(x) :=\lim_{T\to\infty}\overline w_{T}(x), \quad x\in E.
\end{equation}
Let us now link the functions $\underline{w}$ and $\overline{w}$ with~\eqref{eq:opt_stop_unb_cont} and~\eqref{eq:opt_stop_unb_cont2}.

\begin{theorem}\label{th:equiv_cont}
Let the functions $u$ and $w$ be given by~\eqref{eq:opt_stop_unb_cont} and~\eqref{eq:opt_stop_unb_cont2}, respectively. Also, let $\underline{w}$ and $\overline{w}$ be given by~\eqref{eq:w.both2}. Then we get $u\equiv \underline{w}$ and $w\equiv \overline{w}$. Also, $u$ is lower semicontinuous and $w$ is upper semicontinuous.
\end{theorem}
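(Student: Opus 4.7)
The proof mirrors Theorem~\ref{th:und_over_prob}, with Proposition~\ref{pr:finite_hor} replacing Proposition~\ref{pr:disc_und_over}. First I would record the elementary bounds: taking $\tau = 0$ in \eqref{eq:w^t_approx_from_below} and \eqref{eq:w^T_approx_from_above} with $T > 0$ gives $0 \leq \underline{w}_T(x), \overline{w}_T(x) \leq G(x)$, and letting $T \to \infty$ yields $0 \leq \underline{w}, \overline{w} \leq G$. Since $\underline{w}_T$ is continuous and $T \mapsto \underline{w}_T$ is non-decreasing, the pointwise limit $\underline{w} = \sup_T \underline{w}_T$ is lower semicontinuous; dually, $\overline{w} = \inf_T \overline{w}_T$ is upper semicontinuous. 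Thus the semicontinuity assertions follow once the identifications $u \equiv \underline{w}$ and $w \equiv \overline{w}$ are established.

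For $u \equiv \underline{w}$, the direction $\underline{w} \leq u$ follows from the chain (valid for any $\tau$ which is $\mathbb{P}_x$-a.s.\ finite) $e^{\underline{w}_T(x)} \leq \mathbb{E}_x[e^{\int_0^{\tau \wedge T} g + 1_{\{\tau < T\}} G(X_\tau)}] \leq \mathbb{E}_x[e^{\int_0^\tau g + G(X_\tau)}]$, using $g, G \geq 0$ on $\{\tau \geq T\}$; letting $T \to \infty$ and infimizing over $\tau$ concludes. For $u \leq \underline{w}$, I adapt Part~1 of Theorem~\ref{th:und_over_prob}. Define $\underline{z}(t) := \exp(\int_0^t g(X_s) ds + \underline{w}(X_t))$ and $\underline{\tau} := \inf\{t \geq 0 : \underline{w}(X_t) \geq G(X_t)\}$. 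The submartingales $\underline{z}_T$ of Proposition~\ref{pr:finite_hor} increase pointwise to $\underline{z}$, so $\underline{z}$ is a submartingale by monotone convergence of conditional expectations. To upgrade $\underline{z}(\cdot \wedge \underline{\tau})$ to a martingale, introduce $\underline{\tau}_\infty := \lim_T \underline{\tau}_T$ (a decreasing limit since $T \mapsto \underline{w}_T$ is non-decreasing, so the stopping region $\{\underline{w}_{T-t}(X_t) = G(X_t)\}$ grows in $T$), and note $\underline{\tau} \leq \underline{\tau}_\infty$ because $\underline{w}_{T-t}(X_t) = G(X_t)$ entails $\underline{w}(X_t) \geq G(X_t)$. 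Passing to the limit $T \to \infty$ in the identity $e^{\underline{w}_T(x)} = \mathbb{E}_x[\underline{z}_T(t \wedge \underline{\tau}_T)]$ via dominated convergence, with dominating function $e^{\|g\|_\infty t} \zeta_t$ integrable by Assumption~\eqref{IF}, yields $e^{\underline{w}(x)} = \mathbb{E}_x[\underline{z}(t \wedge \underline{\tau}_\infty)]$; the requisite pointwise limit $\underline{w}_{T - t \wedge \underline{\tau}_T}(X_{t \wedge \underline{\tau}_T}) \to \underline{w}(X_{t \wedge \underline{\tau}_\infty})$ uses the joint continuity of $\underline{w}_T$ and that $\underline{w}(X_{\underline{\tau}_\infty}) = G(X_{\underline{\tau}_\infty})$ on $\{\underline{\tau}_\infty < \infty\}$. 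Applied to the stopping times $0 \leq t \wedge \underline{\tau} \leq t \wedge \underline{\tau}_\infty$, the submartingale inequality forces $\mathbb{E}_x[\underline{z}(t \wedge \underline{\tau})] = e^{\underline{w}(x)}$, so $\underline{z}(\cdot \wedge \underline{\tau})$ is a martingale. Then $g \geq c$ and Fatou yield $\mathbb{E}_x[e^{c \underline{\tau}}] \leq e^{\underline{w}(x)} < \infty$, hence $\underline{\tau} < \infty$ a.s.; lower semicontinuity of $\underline{w}$, continuity of $G$, and $\underline{w} \leq G$ give $\underline{w}(X_{\underline{\tau}}) = G(X_{\underline{\tau}})$; and Fatou once more gives $e^{u(x)} \leq \mathbb{E}_x[e^{\int_0^{\underline{\tau}} g + G(X_{\underline{\tau}})}] \leq \liminf_t \mathbb{E}_x[\underline{z}(t \wedge \underline{\tau})] = e^{\underline{w}(x)}$.

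For $w \equiv \overline{w}$, the inequality $w \leq \overline{w}$ is immediate from Lemma~\ref{lm:w_bounded}: for every $T \geq 0$, restricting to bounded stopping times $\tau \leq T$ gives $e^{w(x)} \leq e^{\overline{w}_T(x)}$; let $T \to \infty$. For $\overline{w} \leq w$, the bound $e^{\overline{w}_T(x)} \leq \mathbb{E}_x[e^{\int_0^{\tau \wedge T} g + G(X_{\tau \wedge T})}]$ (since $\tau \wedge T$ is bounded by $T$), followed by letting $T \to \infty$ on the left (monotone limit to $\overline{w}$) and taking $\liminf_T$ on the right, then infimum over $\tau$, yields the claim.

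The principal obstacle is upgrading $\underline{z}(\cdot \wedge \underline{\tau})$ from a submartingale to a martingale without direct access to the continuous-time Bellman equation, which is only established in Section~\ref{S:Bellman_cont}. The resolution is to pass to the limit $T \to \infty$ in the finite-horizon martingale identity of Proposition~\ref{pr:finite_hor} at the approximating stopping times $\underline{\tau}_T$, where the dominating control $\zeta_t$ supplied by Assumption~\eqref{IF} and the interplay between $\underline{\tau}$ and $\underline{\tau}_\infty$ are both essential.
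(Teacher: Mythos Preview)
Your treatment of $w\equiv\overline{w}$ and of the easy direction $\underline{w}\leq u$ matches the paper. The issue lies in the reverse inequality $u\leq\underline{w}$, where your route diverges from the paper's and carries a genuine gap.

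You build the argument around the hitting time $\underline{\tau}:=\inf\{t\geq 0:\underline{w}(X_t)\geq G(X_t)\}$ and the claim $\underline{w}(X_{\underline{\tau}})=G(X_{\underline{\tau}})$, justified by ``lower semicontinuity of $\underline{w}$, continuity of $G$, and $\underline{w}\leq G$''. But this is precisely what lower semicontinuity alone cannot give: the set $\{y:\underline{w}(y)\geq G(y)\}$ is the superlevel set $\{\underline{w}-G\geq 0\}$ of a merely lower semicontinuous function, hence only $G_\delta$ and not closed in general. For a right-continuous process hitting a non-closed set there is no guarantee that $X_{\underline\tau}$ lies in the set. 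The same objection applies to your assertion $\underline{w}(X_{\underline{\tau}_\infty})=G(X_{\underline{\tau}_\infty})$, which you need for the pointwise limit inside the dominated convergence step; from $\underline{w}(X_{\underline{\tau}_T})=G(X_{\underline{\tau}_T})$ and $X_{\underline{\tau}_T}\to X_{\underline{\tau}_\infty}$ one only obtains $G(X_{\underline{\tau}_\infty})\geq\underline{w}(X_{\underline{\tau}_\infty})$ via lower semicontinuity, which is vacuous. Note also that the paper establishes the analogous identity (Proposition~\ref{pr:Bellman_cont_opt} and Proposition~\ref{pr:opt_u}) only under an explicit continuity hypothesis on the solution. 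A secondary point: the sequence $\underline{\tau}_T=\hat{\underline\tau}_T\wedge T$ is \emph{not} decreasing in $T$; only $\hat{\underline\tau}_T$ is. The paper handles this by a Borel--Cantelli argument showing that $\{\underline\tau_n<n\}$ occurs eventually, after which $\underline\tau_n=\hat{\underline\tau}_n$ and monotonicity holds.

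The paper avoids both difficulties by never invoking regularity of $\underline{w}$. It works directly with $\hat\tau:=\lim_n\underline\tau_n$ (the limit existing by the Borel--Cantelli step) and applies Fatou to the \emph{finite-horizon} costs:
\[
e^{u(x)}\leq \mathbb{E}_x\Bigl[e^{\int_0^{\hat\tau}g+G(X_{\hat\tau})}\Bigr]
=\mathbb{E}_x\Bigl[\lim_n e^{\int_0^{\underline\tau_n}g+1_{\{\underline\tau_n<n\}}G(X_{\underline\tau_n})}\Bigr]
\leq\liminf_n e^{\underline w_n(x)}=e^{\underline w(x)}.
\]
The middle equality uses only right-continuity of $X$ and continuity of $G$; no property of $\underline{w}$ beyond $\underline{w}_T\leq G$ enters. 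The martingale property of $\underline{z}(\cdot\wedge\underline\tau)$ that you attempt here is in fact deferred to Section~\ref{S:Bellman_cont} and proved there (Theorem~\ref{th:Bellman_cont}) by a different limiting argument.
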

\begin{proof}
The proof for $w\equiv \overline{w}$ follows the lines of the second step in the proof of Theorem~\ref{th:und_over_prob} and is omitted for brevity. Now we show that $u\equiv \underline{w}$. The proof is partially based on Theorem 15 in~\cite{JelPitSte2019a}. For transparency, we present it in detail.

First, recalling non-negativity of $g$ and $G$, for any $T\geq 0$ and $x\in E$ we get
\begin{align*}
e^{\underline{w}_T(x)}&=\inf_{\tau}\mathbb{E}_x\left[e^{\int_0^{\tau\wedge T} g(X_s)ds+1_{\{ \tau<T\}}G(X_{\tau})}\right]\\
& \leq \inf_{\tau}\mathbb{E}_x\left[e^{\int_{0}^{\tau} g(X_s)ds+G(X_{\tau})}\right]=e^{u(x)}.
\end{align*}
Thus, letting $T\to\infty$, we get $\underline{w}\leq u$. Let us now show the reverse inequality.

For any $T>0$, let $\underline{\tau}_T$ be an optimal stopping time for $\underline{w}_T$, given by the formula~\eqref{optstop}. Define 
\begin{equation}\label{eq:limit_tau}
{\hat{\underline\tau}}_T:=\inf\left\{t\geq 0: \underline{w}_{T-t}(X_t)\geq G(X_t)\right\}
\end{equation}
and observe that $\underline{\tau}_T={\hat{\underline\tau}}_T\wedge T$. By monotonicity of the sequence $(\underline{w}_n(x))_{n\in \mathbb{N}}$, we get ${\hat{\underline\tau}}_{n+1}\leq {\hat{\underline\tau}}_n$. Thus,  for any $n\in\bN$, on the set $\{{{\underline\tau}}_n<n\}$, we get ${\hat{\underline\tau}}_{n}={{\underline\tau}}_{n}$, thus ${\hat{\underline\tau}}_{n+1}={{\underline\tau}}_{n+1}$, and consequently ${{\underline\tau}}_{n+1}\leq {{\underline\tau}}_n$. Moreover, recalling that $g(\cdot)\geq c>0$ and $G(\cdot)\geq 0$, for any $x\in E$, we get
\begin{equation*}
e^{G(x)}\geq e^{\underline{w}_{T}(x)}=\E_x\left[e^{\int_0^{\underline{\tau}_T} g(X_s)ds + 1_{\{\underline{\tau}_T<T\}}G(X_{\underline{\tau}_T})}\right]\geq \E_x\left[1_{\{\underline{\tau}_T=T\}}\right]e^{cT}.
\end{equation*}
Consequently, for any $x\in E$, we get
$
\sum_{n=1}^\infty \bP_x\left[\underline{\tau}_{n}=n\right]\leq \sum_{n=1}^\infty{\frac{e^{G(x)}}{e^{cn}}}<\infty$. Hence, by Borel-Cantelli Lemma, for any $x\in E$, we get $\mathbb{P}_x\left[ \bigcup_{n=1}^\infty \{\underline\tau_n<n\}\right]=1$, and consequently the stopping time
\begin{equation}\label{eq:hat_tau}
\hat{\tau}:=\lim_{n\to \infty} \underline{\tau}_n
\end{equation}
is well defined. Also, we get that $\mathbb{P}_x[\hat\tau<\infty]=1$, $x\in E$. This follows from the fact that for $\mathbb{P}_x$ almost all $\omega \in \Omega$, starting from some $n$ (depending on $\omega)$, the sequence $(\underline{\tau}_n(\omega))$ is non-increasing. Thus, using right continuity of $(X_t)$ and Fatou Lemma, for any $x\in E$, we get
\begin{align}\label{eq:th:w_continuity:ineq2}
e^{u(x)} \leq \E_x\left[e^{\int_0^{\hat{\tau}} g(X_s)ds+G(X_{\hat{\tau}})}\right] & = \E_x\left[\lim_{n\to \infty}\left(e^{\int_0^{\underline{\tau}_n} g(X_s)ds+1_{\{\underline{\tau}_n<n\}}G(X_{\underline{\tau}_n})}\right)\right] \nonumber \\
& \leq \liminf_{n\to\infty} e^{\underline{w}_n(x)} = e^{\underline{w}(x)},
\end{align}
which concludes the proof of $u\equiv \underline{w}$.

Finally, recalling that by Proposition~\ref{pr:finite_hor} the map $u$ is an increasing limit of continuous functions, we get that $u$ is lower semicontinuous. Using similar argument for $w$ we get upper semicontinuity.
\end{proof}

\begin{remark}\label{rm:opt_tau_cont}
From the proof of Theorem~\ref{th:equiv_cont} we get that the stopping time $\hat{\tau}$ given by~\eqref{eq:hat_tau} is optimal for $u\equiv \underline{w}$; see~\eqref{eq:th:w_continuity:ineq2}. Also, note that in the proof we showed that $\mathbb{P}_x[\hat\tau<\infty]=1$, $x\in E$; see the discussion following~\eqref{eq:hat_tau}.
\end{remark}

In Theorem~\ref{th:equiv_cont} we showed that the function $u$ given by~\eqref{eq:opt_stop_unb2} may be seen as a limit of finite horizon stopping problems $\underline{w}_T$. Let us now show that $u$ may also be approximated by stopping problems with truncated terminal cost function. More explicitly, for any $n\in \mathbb{N}$, we define
\begin{equation}\label{eq:opt_stop_bound}
u_n(x):=\inf_{\tau}\ln\mathbb{E}_x\left[\exp\left(\int_0^\tau g(X_s)ds+G(X_\tau)\wedge n\right)\right],\quad  x \in E.
\end{equation}
Clearly, we have $u_n(x)\leq u_{n+1}(x)\leq u(x)$ for any $x\in E$ and $n\in \mathbb{N}$. 
In Theorem~\ref{th:inf1} we link the functions $u$ and $u_n$. 
\begin{theorem}\label{th:inf1}
Let the functions $u$ and $u_n$ be given by~\eqref{eq:opt_stop_unb_cont} and~\eqref{eq:opt_stop_bound}, respectively. Then, for any $x\in E$, we get $u(x)=\lim_{n\to\infty} u_n(x)$. 
\end{theorem}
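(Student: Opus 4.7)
The easy inequality $\lim_{n\to\infty} u_n(x) \leq u(x)$ is immediate from $G \wedge n \leq G$. For the reverse direction, the plan is to reduce to finite-horizon problems and exploit joint monotonicity.

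First, I would apply Theorem~\ref{th:equiv_cont} to the auxiliary problem with terminal cost $G \wedge n$ in place of $G$. All three standing assumptions transfer: continuity and non-negativity are preserved under truncation, Assumption~\eqref{IF} is trivial since $G \wedge n \leq n$, and~\eqref{Feller} for $G \wedge n$ is inherited from~\eqref{Feller} for $G$ because any continuous $h$ with $0 \leq h \leq G \wedge n$ also satisfies $h \leq G$. This yields $u_n(x) = \lim_{T \to \infty} \underline{w}^n_T(x)$, where $\underline{w}^n_T(x) := \inf_{\tau \leq T} \ln \mathbb{E}_x\left[e^{\int_0^\tau g(X_s)ds + 1_{\{\tau<T\}}(G(X_\tau) \wedge n)}\right]$ is the finite-horizon analogue of $\underline{w}_T$ with $G$ replaced by $G \wedge n$.

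The heart of the argument is to show $\lim_{n\to\infty} \underline{w}^n_T(x) = \underline{w}_T(x)$ for each $T \geq 0$ and $x \in E$. The inequality $\underline{w}^n_T \leq \underline{w}_T$ is clear. For the reverse, using $e^{G(X_\sigma)} - e^{G(X_\sigma) \wedge n} \leq e^{G(X_\sigma)} 1_{\{G(X_\sigma)>n\}} \leq \zeta_T \, 1_{\{\zeta_T > e^n\}}$ together with boundedness of $g$, I would estimate, for every stopping time $\sigma \leq T$,
\[
0 \leq \mathbb{E}_x\left[e^{\int_0^\sigma g(X_s)\, ds + 1_{\{\sigma<T\}} G(X_\sigma)}\right] - \mathbb{E}_x\left[e^{\int_0^\sigma g(X_s)\, ds + 1_{\{\sigma<T\}}(G(X_\sigma) \wedge n)}\right] \leq e^{T \|g\|_\infty}\, \mathbb{E}_x\left[\zeta_T\, 1_{\{\zeta_T > e^n\}}\right].
\]
The right-hand bound is independent of $\sigma$, so taking infimum over $\sigma \leq T$ gives $e^{\underline{w}_T(x)} - e^{\underline{w}^n_T(x)} \leq e^{T \|g\|_\infty}\, \mathbb{E}_x[\zeta_T\, 1_{\{\zeta_T > e^n\}}]$, and by~\eqref{IF} and dominated convergence the right side tends to $0$ as $n \to \infty$.

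Finally, $\underline{w}^n_T(x)$ is non-decreasing in both $n$ (as $G \wedge n \uparrow G$) and $T$ (by Proposition~\ref{pr:finite_hor} applied to the capped problem), so the iterated limits commute:
\[
\lim_{n\to\infty} u_n(x) = \sup_n \sup_T \underline{w}^n_T(x) = \sup_T \sup_n \underline{w}^n_T(x) = \sup_T \underline{w}_T(x) = u(x),
\]
the last equality being Theorem~\ref{th:equiv_cont}. The main obstacle is the need to swap $\lim_n$ with the infimum over $\sigma$ in the definition of $\underline{w}^n_T$; pointwise monotone convergence for each fixed $\sigma$ is not enough, and Assumption~\eqref{IF} is essential in providing a single dominating envelope $\zeta_T$ that controls the tails uniformly in the stopping time.
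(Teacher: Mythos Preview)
Your proof is correct but follows a genuinely different route from the paper's. The paper argues directly at the infinite-horizon level: it takes the optimal stopping times $\tau_n$ for $u_n$ (supplied by the bounded-cost theory), shows via a Borel--Cantelli estimate that $(\tau_n)$ is eventually non-increasing and hence converges to some $\tilde\tau$, and then uses right-continuity of $X$ and Fatou to pass to the limit in $\mathbb{E}_x[e^{\int_0^{\tau_n} g + G(X_{\tau_n})\wedge n}]$. Your approach instead descends to the finite-horizon level, proves $\underline{w}^n_T \to \underline{w}_T$ for each fixed $T$ via the uniform tail bound furnished by $\zeta_T$, and then swaps the two monotone limits. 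This finite-horizon convergence is exactly the content of the paper's Lemma~\ref{lm:fin1} (where $\underline{w}^n_T$ appears as $\underline{v}^n_T$), though the paper proves that lemma by another Borel--Cantelli argument rather than by your direct $\zeta_T$-envelope estimate. The paper's route yields a concrete limiting stopping time $\tilde\tau$ that is optimal for $u$ as a by-product; your route is arguably cleaner structurally, leaning on the finite-horizon machinery and a single dominated-convergence step, and it makes transparent why Assumption~\eqref{IF} is exactly what is needed to interchange $\lim_n$ with $\inf_\sigma$.
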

\begin{proof}
Let us define the sequence of events $A_n:=\{G(X_{\tau_n})\leq n\}, n\in \mathbb{N}$, where 
\[
\tau_n:=\inf\{t\geq 0:u_n(X_t)\geq G(X_t)\wedge n\}.
\]
Using Theorem 15 from~\cite{JelPitSte2019a} we get that $\tau_n$ is an optimal stopping time for $u_n(x)$, $x\in E$, $n\in \mathbb{N}$. Also, recalling that $g(\cdot)\geq 0$, for any $x\in E$ and $k\in \mathbb{N}$, we get
\begin{align*}
e^{G(x)}\geq e^{u_k(x)}&=\mathbb{E}_x\left[e^{\int_0^{\tau_k} g(X_s)ds+G(X_{\tau_k})\wedge k}\right]\\
&\geq \mathbb{E}_x\left[1_{A_k^c}e^{\int_0^{\tau_k} g(X_s)ds+G(X_{\tau_k})\wedge k}\right]\geq \mathbb{P}_x\left[A_k^c\right]e^k.
\end{align*}
Thus $\mathbb{P}_x\left[A_k^c\right]\leq \frac{e^{G(x)}}{e^k}$ and $\sum_{k=1}^\infty \mathbb{P}_x\left[A_k^c\right]<\infty$. Hence, from Borel-Cantelli Lemma, for any $x\in E$, we get
\begin{equation}\label{eq:calc1}
\mathbb{P}_x\left[ \cup_{n=1}^\infty \cap_{k=n}^\infty A_k\right]=1.
\end{equation}
Let us fix $n\in \mathbb{N}$ and note that on the set $\cap_{k=n}^\infty A_k$, for any $j\geq 0$, we get
\begin{multline*}
u_{n+j+1}(X_{\tau_{n+j}})\geq u_{n+j}(X_{\tau_{n+j}})\geq G(X_{\tau_{n+j}})\wedge(n+j)\\
=G(X_{\tau_{n+j}})\geq G(X_{\tau_{n+j}})\wedge (n+j+1).
\end{multline*}
Thus, on the set $\cap_{k=n}^\infty A_k$, for any $j\geq 0$, we get $\tau_{n+j+1} \leq \tau_{n+j}$. Combining this with~\eqref{eq:calc1}, we may define the stopping time $\tilde{\tau}:=\lim_{n\to\infty}\tau_n$. 
Moreover, we get that $\tilde\tau$ is almost surely finite since, for any $n\in \mathbb{N}$, the stopping time $\tau_n$ is almost surely finite; see Remark 16 in~\cite{JelPitSte2019a} for details. Thus, using right continuity of $X$ and Fatou Lemma, for any $x\in E$, we get
\begin{align*}
e^{u(x)}&\leq \mathbb{E}_x\left[e^{\int_0^{\tilde\tau} g(X_s)ds+G(X_{\tilde\tau})}\right]\nonumber\\
&=\mathbb{E}_x\left[\lim_{n\to\infty}e^{\int_0^{\tau_n } g(X_s)ds+G(X_{\tau_n})\wedge n}\right]\nonumber\\
& \leq \lim_{n\to\infty}\mathbb{E}_x\left[e^{\int_0^{\tau_n } g(X_s)ds+G(X_{\tau_n })\wedge n}\right]=\lim_{n\to\infty}e^{u_n(x)}\leq e^{u(x)}.
\end{align*}
and consequently $\lim_{n\to\infty} u_n(x)=u(x)$, $x\in E$.
\end{proof}
\begin{remark}
By analogy to Theorem~\ref{th:inf1} one could try to approximate the function $w$ from~\eqref{eq:opt_stop_unb_cont2} by the family
\begin{equation*}\label{eq:opt_stop_bound2}
w_n(x):=\inf_{\tau}\liminf_{T\to\infty}\ln\mathbb{E}_x\left[\exp\left(\int_0^{\tau\wedge T} g(X_s)ds+G(X_{\tau\wedge T})\wedge n\right)\right],\,  n\in \mathbb{N}, \, x \in E.
\end{equation*}
However, since for any $n\in \mathbb{N}$ the map $G(\cdot) \wedge n$ is bounded, using Theorem 15 from~\cite{JelPitSte2019a} we get $u_n\equiv w_n$ and by Theorem~\ref{th:inf1} we get $w_n\to u$. In fact, the identity $u_n\equiv w_n$  may also be deduced from Corollary~\ref{cor:identity_bounded} in this paper.
\end{remark}

\section{Continuous time Bellman equation}\label{S:Bellman_cont}

In this section we extend the results from Section~\ref{S:discrete} to the continuous time case. We consider the continuous time Bellman equation which takes the form of optimal stopping dynamic programming principle
\begin{equation}\label{eq:Bellman_cont}
e^{v(x)}=\inf_{\tau} \mathbb{E}_x\left[e^{\int_0^{\tau \wedge t} g(X_s) ds+1_{\{\tau< t \}}G(X_\tau)+1_{\{\tau\geq  t \}}v(X_t)}\right], \quad t\geq 0,\,x\in E.
\end{equation}
By analogy to Section~\ref{S:discrete} we show that the maps $u$ and $w$ are minimal and maximal solutions to this equation, respectively.

First, note that~\eqref{eq:Bellman_cont} may be expressed in the operator form as
\[
\Phi_t v(x)=v(x), \quad t\geq 0,\, x\in E,
\]
where, for any $t\geq 0$, the operator $\Phi_t$ is given by
\begin{equation}\label{eq:Phi}
\Phi_t h(x):=\inf_{\tau} \ln \mathbb{E}_x\left[e^{\int_0^{\tau \wedge t} g(X_s) ds+1_{\{\tau< t \}}G(X_\tau)+1_{\{\tau\geq t \}}h(X_t)}\right], \, x\in E,
\end{equation}
and $h:E\mapsto \mathbb{R}_+$ is a non-negative measurable function. To characterise the solutions to~\eqref{eq:Bellman_cont}, for any $t\geq 0$, let us define recursively
\begin{align}
\underline{v}_0^t(x) & =0, &  \underline{v}_{n+1}^t(x) & =\Phi_t \underline{v}_n^t(x), & \quad n\in \mathbb{N},\, x\in E;\label{eq:underline_w_cont}\\
\overline{v}_0^t(x) & =G(x), &  \overline{v}_{n+1}^t(x) & =\Phi_t \overline{v}_n^t(x), & \quad n\in \mathbb{N},\, x\in E.\label{eq:overline_w_cont}
\end{align}
We start with linking $\underline{v}_n^t$ and $\overline{v}_n^t$ with the functions $\underline{w}_T$ and $\overline{w}_T$.
\begin{proposition}\label{pr:hat_w}
For any $t\geq 0$ and $n\in \mathbb{N}$, let the maps $\underline{v}_n^t$ and $\overline{v}_n^t$ be given by~\eqref{eq:underline_w_cont} and~\eqref{eq:overline_w_cont}, respectively. Then, 
\begin{enumerate}
\item\label{p:pr:hat_w:1} For any $t\geq 0$ and $n\in \mathbb{N}$, we get $\underline{v}_n^t\equiv \underline{w}_{nt}$ and $\overline{v}_n^t\equiv \overline{w}_{nt}$, where the functions $\underline{w}_T$ and $\overline{w}_T$ are given by~\eqref{eq:w^t_approx_from_below} and~\eqref{eq:w^T_approx_from_above}, respectively.
\item\label{p:pr:hat_w:2} For any $x\in E$ and $t\geq 0$, we get 
\[
\lim_{n\to\infty} \underline{v}_n^t(x)= u(x)\quad \text{and} \quad \lim_{n\to\infty} \overline{v}_n^t(x)=w(x),
\]
 where the functions $u$ and $w$ be given by~\eqref{eq:opt_stop_unb_cont} and~\eqref{eq:opt_stop_unb_cont2}, respectively. In particular, the limits $\lim_{n\to\infty} \underline{v}_n^t(x)$ and $\lim_{n\to\infty} \overline{v}_n^t(x)$ are well-defined and independent of $t\geq 0$.
\end{enumerate}
\end{proposition}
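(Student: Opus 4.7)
My plan is to prove Part~\eqref{p:pr:hat_w:1} by induction on $n$, and then derive Part~\eqref{p:pr:hat_w:2} by combining it with Proposition~\ref{pr:finite_hor} and Theorem~\ref{th:equiv_cont}. Throughout I focus on $t>0$; the case $t=0$ is trivial because $\Phi_0$ reduces to the identity, so both $\underline{v}_n^0\equiv 0\equiv \underline{w}_0$ and $\overline{v}_n^0\equiv G\equiv \overline{w}_0$. The base case $n=0$ is immediate from the definitions: only $\tau=0$ is admissible for $\underline{w}_0$, giving $\underline{w}_0\equiv 0\equiv \underline{v}_0^t$, and $\overline{w}_0(x)=G(x)=\overline{v}_0^t(x)$.

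For the inductive step, assuming $\underline{v}_n^t\equiv \underline{w}_{nt}$, I will establish the dynamic programming identity
\[
\Phi_t\underline{w}_{nt}(x)=\underline{w}_{(n+1)t}(x),\qquad x\in E,
\]
together with its counterpart $\Phi_t\overline{w}_{nt}=\overline{w}_{(n+1)t}$. The ``$\geq$'' direction uses the strong Markov property: for any stopping time $\rho\leq (n+1)t$, set $\sigma:=\rho\wedge t$ and decompose the cost on $\{\rho<t\}$ (where the contribution $e^{\int_0^\rho g(X_s)\,ds+G(X_\rho)}$ matches the first piece of $\Phi_t$) and on $\{\rho\geq t\}$, on which conditioning on $\mathcal{F}_t$ and applying the Markov property produces a shifted horizon-$nt$ stopping problem whose conditional expectation is bounded below by $e^{\int_0^t g(X_s)\,ds}\,e^{\underline{w}_{nt}(X_t)}$. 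For the ``$\leq$'' direction, I glue: given an arbitrary $\sigma$ and the optimal stopping time $\underline{\tau}_{nt}$ for $\underline{w}_{nt}$ supplied by Proposition~\ref{pr:finite_hor}, form the stopping time equal to $\sigma$ on $\{\sigma<t\}$ and to $t$ plus the restarted version of $\underline{\tau}_{nt}$ on $\{\sigma\geq t\}$. A direct Markov computation shows this glued time is admissible for $\underline{w}_{(n+1)t}(x)$ and achieves the value of $\Phi_t\underline{w}_{nt}(x)$. The argument for $\overline{w}$ is structurally identical; the only difference is that no indicator multiplies the terminal cost, so the ``stop exactly at $t$'' possibility is folded seamlessly into the continuation value $\overline{w}_{nt}(X_t)\leq G(X_t)$.

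Once Part~\eqref{p:pr:hat_w:1} is established, Part~\eqref{p:pr:hat_w:2} follows easily. The identities $\underline{v}_n^t=\underline{w}_{nt}$ and $\overline{v}_n^t=\overline{w}_{nt}$ reduce the limit questions to $\lim_{n\to\infty}\underline{w}_{nt}(x)$ and $\lim_{n\to\infty}\overline{w}_{nt}(x)$. For $t>0$ we have $nt\to\infty$, so by~\eqref{eq:w.both2} and the monotonicity asserted in Proposition~\ref{pr:finite_hor} these limits equal $\underline{w}(x)$ and $\overline{w}(x)$, which by Theorem~\ref{th:equiv_cont} coincide with $u(x)$ and $w(x)$. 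Independence from $t$ is automatic from this identification.

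The main obstacle is the ``$\leq$'' direction of the dynamic programming identity, since it requires gluing a family of stopping times parameterised by the starting point $y\in E$ into a single well-defined stopping time after time $t$. This is precisely where the explicit formulas for the optimal stopping times in Proposition~\ref{pr:finite_hor}, as hitting times of continuous level sets, are used to bypass any abstract measurable selection argument and ensure that the glued object is a genuine stopping time with respect to the original filtration.
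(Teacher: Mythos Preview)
Your proposal is correct and your handling of Part~\eqref{p:pr:hat_w:2} matches the paper exactly. For Part~\eqref{p:pr:hat_w:1}, however, you take a genuinely different route from the paper in the inductive step. You argue both inequalities in $\Phi_t\underline{w}_{nt}=\underline{w}_{(n+1)t}$ by direct Markov-property manipulations: one direction decomposes an arbitrary $\rho\leq(n+1)t$ on $\{\rho<t\}$ versus $\{\rho\geq t\}$ and bounds the conditional tail from below by the value function at horizon $nt$; the other glues an arbitrary $\sigma$ to the optimal time $\underline{\tau}_{nt}$ restarted at time $t$. The paper instead works entirely with the longer-horizon Snell envelope $\underline{z}_{(n+1)t}(s)=e^{\int_0^s g+\underline{w}_{(n+1)t-s}(X_s)}$ from Proposition~\ref{pr:finite_hor}: the submartingale property combined with the pointwise bound $\underline{w}_{(n+1)t-s}\leq G$ yields one inequality, and the martingale property of $\underline{z}_{(n+1)t}(\cdot\wedge\underline{\tau}_{(n+1)t})$ evaluated at time $t$ (together with $\underline{w}_{(n+1)t-\underline{\tau}_{(n+1)t}}(X_{\underline{\tau}_{(n+1)t}})=G(X_{\underline{\tau}_{(n+1)t}})$ on $\{\underline{\tau}_{(n+1)t}<t\}$) yields the other. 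The paper's argument is slightly slicker in that it avoids gluing altogether and never needs the optimal time for horizon $nt$, only for $(n+1)t$; your argument is more explicitly a dynamic-programming verification and makes transparent exactly where the Markov property and the explicit hitting-time form of $\underline{\tau}_{nt}$ enter. Both routes use Proposition~\ref{pr:finite_hor} as the essential input, just different parts of it.
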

\begin{proof}
For transparency, we split the proof into two parts.

\medskip
\noindent
\textit{Proof of \eqref{p:pr:hat_w:1}.} 
We present the proof only for $\underline{v}_n^t$; the argument for $\overline{v}_n^t$ is similar and is omitted for brevity. Also, for the notational convenience we set $t=1$; the general case follows the same logic. 

We proceed by induction. The claim for $n=0$ follows directly from the definition. Let us assume that for some $n\in \mathbb{N}$ we get $\underline{v}_n^1\equiv \underline{w}_{n}$. Define the process $\underline{z}_{n+1}(t):=e^{\int_0^{t\wedge (n+1)} g(X_s) ds+\underline{w}_{n+1-t\wedge (n+1)}(X_{t\wedge (n+1)})}$, $t\geq 0$. Using Proposition~\ref{pr:finite_hor} and Doob optional stopping theorem, for any stopping time $\tau$ we get that the process $(\underline{z}_{n+1}(\tau\wedge t))$, $t\geq 0$, is a submartingale. In particular, for any $x\in E$, we get $\mathbb{E}_x[\underline{z}_{n+1}(0)]\leq \inf_{\tau} \mathbb{E}_x\left[\underline{z}_{n+1}(\tau \wedge 1)\right]$. Then, recalling that $\underline{w}_T(x)\leq G(x)$ for any $x\in E$ and $T\geq 0$, we get
\begin{align}\label{eq:lm:hat_w:1}
e^{\underline{w}_{n+1}(x)}=\mathbb{E}_x[\underline{z}_{n+1}(0)]&\leq \inf_{\tau} \mathbb{E}\left[e^{\int_0^{\tau \wedge 1} g(X_s) ds+\underline{w}_{n+1-\tau\wedge 1}(X_{\tau\wedge 1})}\right]\nonumber\\
& \leq  \inf_{\tau} \mathbb{E}\left[e^{\int_0^{\tau \wedge 1} g(X_s) ds+1_{\{\tau< 1 \}}G(X_\tau)+1_{\{\tau\geq 1 \}}\underline{w}_{n}(X_{1})}\right].
\end{align}
Recall that by Proposition~\ref{pr:finite_hor} the process $(\underline{z}_{n+1}( \underline{\tau}_{n+1}\wedge t))$, $t\geq 0$, is a martingale, where $\underline{\tau}_{n+1}:=\inf\{t\geq 0: \underline{w}_{n+1-t}(X_t)= G(X_t)\}\wedge (n+1)$. Also, on the event $\{\underline{\tau}_{n+1}<n+1\}$ we get $\underline{w}_{n+1-\underline{\tau}_{n+1}}(X_{\underline{\tau}_{n+1}})=G(X_{\underline{\tau}_{n+1}})$. Thus, for any $x\in E$, we get
\[
e^{\underline{w}_{n+1}(x)}= \mathbb{E}\left[e^{\int_0^{\underline{\tau}_{n+1} \wedge 1} g(X_s) ds+1_{\{\underline{\tau}_{n+1}< 1 \}}G(X_{\underline{\tau}_{n+1}})+1_{\{\underline{\tau}_{n+1}\geq  1 \}}\underline{w}_{n}(X_{1})}\right].
\]
Combining this with~\eqref{eq:lm:hat_w:1} and using induction assumption, for any $x\in E$, we get
\begin{align*}
e^{\underline{w}_{n+1}(x)}&=\inf_{\tau} \mathbb{E}\left[e^{\int_0^{\tau \wedge 1} g(X_s) ds+1_{\{\tau< 1 \}}G(X_\tau)+1_{\{\tau\geq  1 \}}\underline{w}_{n}(X_{1})}\right]\\
&=e^{\Phi_1 \underline{w}_{n}(x)}= e^{\Phi_1 \underline{v}_{n}^1(x)}=e^{\underline{v}_{n+1}^1(x)},
\end{align*}
which concludes the proof of this point.

\medskip
\noindent
\textit{Proof of \eqref{p:pr:hat_w:2}.} Recalling~\eqref{p:pr:hat_w:1} and Theorem~\ref{th:equiv_cont}, for any $x\in E$ and $t\geq 0$, we get
\[
\lim_{n\to\infty} \underline{v}_n^t(x) = \lim_{n\to\infty} \underline{w}_{nt} (x) = \underline{w}(x)= u(x).
\]
Using similar argument we get $\lim_{n\to\infty} \underline{v}_n^t(x)=w(x)$, $t\geq 0$, $x\in E$, which concludes the proof.
\end{proof}

In the following theorem we characterise the solutions to the Bellman equation~\eqref{eq:Bellman_cont}. In particular, we get that $u$ and $w$ are minimal and maximal solutions to~\eqref{eq:Bellman_cont}, respectively. This may be seen as a continuous time version of Theorem~\ref{th:und_over_prob}.

\begin{theorem}\label{th:Bellman_cont}
Let the functions $u$ and $w$ be given by~\eqref{eq:opt_stop_unb_cont} and~\eqref{eq:opt_stop_unb_cont2}, respectively. Then,
\begin{enumerate}
\item\label{p:2} The functions $u$ and $w$ are solutions to~\eqref{eq:Bellman_cont}.
\item\label{p:3} For any solution $v$ to the Bellman equation~\eqref{eq:Bellman_cont} satisfying $0\leq v(\cdot)\leq G(\cdot)$ we get $u(\cdot)\leq v(\cdot)\leq w(\cdot)$.
\end{enumerate}
\end{theorem}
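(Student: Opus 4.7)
The plan is to exploit the iteration schemes $(\underline{v}_n^t)$ and $(\overline{v}_n^t)$ from Proposition~\ref{pr:hat_w} together with the monotonicity of the operator $\Phi_t$. Recall that $\Phi_t$ is monotone (if $h\leq h'$ pointwise then $\Phi_t h\leq \Phi_t h'$, as the integrand inside the infimum is pointwise monotone in the input function), and by Proposition~\ref{pr:hat_w} we have $\underline{v}_n^t\uparrow u$ and $\overline{v}_n^t\downarrow w$, with $\underline{v}_{n+1}^t=\Phi_t\underline{v}_n^t$ and $\overline{v}_{n+1}^t=\Phi_t\overline{v}_n^t$. Part (2) then follows immediately by induction once (1) is established: from $0\leq v$ and monotonicity of $\Phi_t$, $\underline{v}_n^t\leq v$ for every $n$ and hence $u\leq v$; from $v\leq G$, $\overline{v}_n^t\geq v$ and hence $w\geq v$.

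For Part (1), the inequality $u\leq \Phi_t u$ is immediate: from $\underline{v}_n^t\leq u$ and monotonicity of $\Phi_t$, $\underline{v}_{n+1}^t=\Phi_t\underline{v}_n^t\leq \Phi_t u$; let $n\to\infty$. For the reverse inequality $u\geq \Phi_t u$, I would invoke the optimal stopping time $\hat\tau$ for $u$ from Remark~\ref{rm:opt_tau_cont}, split
\[
e^{u(x)}=\mathbb{E}_x\!\left[1_{\{\hat\tau<t\}}e^{\int_0^{\hat\tau}g(X_s)ds+G(X_{\hat\tau})}\right]+\mathbb{E}_x\!\left[1_{\{\hat\tau\geq t\}}e^{\int_0^{\hat\tau}g(X_s)ds+G(X_{\hat\tau})}\right],
\]
and apply the strong Markov property at time $t$ to the second term. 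Writing the conditional expectation in terms of $\mathbb{P}_{X_t}$ and using the trivial bound $\mathbb{E}_y[\exp(\int_0^\sigma g(X_s)ds+G(X_\sigma))]\geq e^{u(y)}$ for every $\mathbb{P}_y$-a.s.\ finite stopping time $\sigma$, the second term is bounded below by $\mathbb{E}_x[1_{\{\hat\tau\geq t\}}e^{\int_0^t g(X_s)ds+u(X_t)}]$. This gives $e^{u(x)}\geq e^{\Phi_t u(x)}$ since the right-hand side is the value of the integrand in $\Phi_t u(x)$ at the stopping time $\hat\tau$.

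For $w$, the inequality $w\geq \Phi_t w$ is analogous and immediate: $\overline{v}_n^t\geq w$ together with monotonicity yields $\overline{v}_{n+1}^t=\Phi_t\overline{v}_n^t\geq \Phi_t w$, hence $w\geq \Phi_t w$ by letting $n\to\infty$. For the reverse $w\leq \Phi_t w$, I would pass to the limit in $\overline{v}_{n+1}^t=\Phi_t\overline{v}_n^t$ via dominated convergence. Fix $\tau$; since $0\leq \overline{v}_n^t\leq G$, the integrand inside $\mathbb{E}_x$ is bounded by $\exp(\int_0^{\tau\wedge t}g(X_s)ds+G(X_{\tau\wedge t}))\leq e^{t\|g\|_\infty}\zeta_t$, which is $\mathbb{P}_x$-integrable by Assumption~\eqref{IF}. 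Thus
\[
\mathbb{E}_x\!\left[e^{\int_0^{\tau\wedge t}g(X_s)ds+1_{\{\tau<t\}}G(X_\tau)+1_{\{\tau\geq t\}}\overline{v}_n^t(X_t)}\right]\longrightarrow \mathbb{E}_x\!\left[e^{\int_0^{\tau\wedge t}g(X_s)ds+1_{\{\tau<t\}}G(X_\tau)+1_{\{\tau\geq t\}}w(X_t)}\right].
\]
Since $e^{\overline{v}_{n+1}^t(x)}$ is bounded above by the left-hand side for every $\tau$, taking $n\to\infty$ and then infimum over $\tau$ gives $e^{w(x)}\leq e^{\Phi_t w(x)}$.

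The main obstacles are locating the correct comparison in the Markov-property step for $u\geq \Phi_t u$ (which requires invoking the existence of an almost surely finite optimal stopping time from Remark~\ref{rm:opt_tau_cont}) and securing the dominating function in the $w$-argument, where Assumption~\eqref{IF} is essential to guarantee the integrability of $\zeta_t$. The monotone-convergence half (for $u\leq\Phi_t u$ and $w\geq\Phi_t w$) is routine once $\Phi_t$'s monotonicity is noted.
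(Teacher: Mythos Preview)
Your proposal is correct. For Part~(2) and for the $w$-half of Part~(1) it coincides with the paper's argument almost verbatim: both pass to the limit in the iteration $\overline v_{n+1}^t=\Phi_t\overline v_n^t$ via monotone (equivalently dominated) convergence, and both deduce the sandwich $u\le v\le w$ from monotonicity of $\Phi_t$ together with Proposition~\ref{pr:hat_w}.

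The genuine difference is in how you prove that $u$ solves~\eqref{eq:Bellman_cont}. For $u\le\Phi_t u$ the paper first shows that $\underline z(t)=e^{\int_0^t g+u(X_t)}$ is a submartingale (as a limit of the finite-horizon submartingales $\underline z_T$) and then applies optional stopping; you instead use the one-line iteration bound $\underline v_{n+1}^t=\Phi_t\underline v_n^t\le\Phi_t u$, which is shorter. For $u\ge\Phi_t u$ the paper passes through the finite-horizon martingale identities $e^{\underline w_T(x)}=\mathbb E_x[\underline z_T(\underline\tau_T\wedge t)]$ and lets $T\to\infty$ (using Assumption~\eqref{IF}), obtaining directly the equality in~\eqref{eq:pr:Bellman_cont:2}. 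You take a different route, invoking the Markov property at the deterministic time $t$ with the optimal $\hat\tau$ and the lower bound $\mathbb E_y[\,\cdot\,]\ge e^{u(y)}$. That step is standard in dynamic-programming proofs but deserves one extra line of justification: you need that on $\{\hat\tau\ge t\}$ the conditional cost can be written as $\mathbb E_{X_t}[e^{\int_0^\sigma g+G(X_\sigma)}]$ for some stopping time $\sigma$. Here this holds because $\hat\tau$ is a terminal time (from $\underline\tau_T-t=\underline\tau_{T-t}\circ\theta_t$ on $\{\underline\tau_T>t\}$ one gets $\hat\tau-t=\hat\tau\circ\theta_t$ on $\{\hat\tau>t\}$), which in fact gives the \emph{equality} $\mathbb E_x[e^{\int_t^{\hat\tau}g+G(X_{\hat\tau})}\,|\,\mathcal F_t]=e^{u(X_t)}$ rather than merely the inequality you use. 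The paper's limiting approach sidesteps having to articulate this point.
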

\begin{proof} For transparency, we split the proof into two parts.

\medskip
\noindent
\textit{Proof of \eqref{p:2}.} First, we prove that $u$ satisfies~\eqref{eq:Bellman_cont}. Let us define the process
\begin{equation}\label{eq:und_z}
\underline{z}(t):=e^{\int_0^t g(X_s) ds+ \underline{w}(X_t)}, \quad t\geq 0,
\end{equation}
where $\underline{w}$ is given by~\eqref{eq:w.both2}. We show that $(\underline{z}(t))$, $t\geq 0$, is a submartingale. From Proposition~\ref{pr:finite_hor}, using submartingale property of $\underline{z}_T$, for any $T,t, h\geq 0$ and $x\in E$, we get
\begin{equation*}\label{eq:lm:mart_inf_hor:subm}
e^{\int_0^{t\wedge T} g(X_s) ds+ \underline{w}_{T-t\wedge T}(X_{t\wedge T})}\leq \mathbb{E}_x\left[e^{\int_0^{(t+h)\wedge T} g(X_s) ds+ \underline{w}_{T-(t+h)\wedge T}(X_{(t+h)\wedge T})}|\mathcal{F}_t\right].
\end{equation*}
Thus, recalling monotonicity of $T\mapsto\underline{w}_T(x)$, $x\in E$, and letting $T\to\infty$, for any $t, h\geq 0$ and $x\in E$, we get
\begin{equation}\label{eq:pr:Bellman_cont:0.5}
\underline{z}(t)= e^{\int_0^{t} g(X_s) ds+ \underline{w}(X_{t})}\leq \mathbb{E}_x\left[e^{\int_0^{t+h} g(X_s) ds+ \underline{w}(X_{t+h})}|\mathcal{F}_t\right]=\mathbb{E}_x\left[\underline{z}(t+h)|\mathcal{F}_t\right],
\end{equation}
which concludes the proof of submartingale property of $(\underline{z}(t))$, $t\geq 0$.

Next, using submartingale property of $(\underline{z}(t))$, $t\geq 0$, Doob optional stopping theorem, and the fact that $\underline{w}\leq G$, for any $t\geq 0$ and $x\in E$, we get
\begin{align}\label{eq:pr:Bellman_cont:1}
e^{\underline{w}(x)}=\mathbb{E}_x\left[\underline{z}(0)\right]&\leq \inf_{\tau} \mathbb{E}_x\left[\underline{z}(\tau \wedge t)\right]\nonumber\\
& \leq \inf_{\tau} \mathbb{E}_x\left[ e^{\int_0^{\tau \wedge t} g(X_s) ds+ 1_{\{\tau<t\}}G(X_{\tau})+1_{\{\tau\geq t\}}\underline{w}(X_t)}\right].
\end{align}
To conclude the proof we show that for any $t\geq 0$ and $x\in E$, we get
\begin{align}\label{eq:pr:Bellman_cont:2}
e^{\underline{w}(x)}=\mathbb{E}_x\left[e^{\int_0^{\hat{\tau}\wedge t} g(X_s) ds+ 1_{\{\hat{\tau}<t \}} G(X_{\hat{\tau}})+1_{\{\hat{\tau}\geq t \}}\underline{w}(X_t) }\right],
\end{align}
where the stopping time $\hat{\tau}$ is given by~\eqref{eq:hat_tau}. From Proposition~\ref{pr:finite_hor}, using martingale property of $(\underline{z}_T(t\wedge \underline{\tau}_T))$, for any $t\geq 0$, $T\geq t$,  and $x\in E$, we get
\begin{align*}
e^{\underline{w}_T(x)}=\mathbb{E}_x[\underline{z}_T(0)]&=\mathbb{E}_x\left[e^{\int_0^{\underline{\tau}_T\wedge t} g(X_s) ds+ \underline{w}_{T-\underline{\tau}_T\wedge t}(X_{\underline{\tau}_T\wedge t})}\right]\\
& = \mathbb{E}_x\left[e^{\int_0^{\underline{\tau}_T\wedge t} g(X_s) ds+ 1_{\{\underline{\tau}_T<t \}} G(X_{\underline{\tau}_T})+1_{\{\underline{\tau}_T\geq t \}}\underline{w}_{T-t}(X_t) }\right]
\end{align*}
Thus, using right-continuity of $X$, recalling Assumption~\eqref{IF}, and letting $T\to\infty$, we get~\eqref{eq:pr:Bellman_cont:2}. Combining this with~\eqref{eq:pr:Bellman_cont:1}, for any $t\geq 0$ and $x\in E$, we get
\[
e^{\underline{w}(x)}=\inf_{\tau} \mathbb{E}_x\left[ e^{\int_0^{\tau \wedge t} g(X_s) ds+ 1_{\{\tau<t\}}G(X_{\tau})+1_{\{\tau\geq t\}}\underline{w}(X_t)}\right].
\]
Recalling that by Theorem~\ref{th:equiv_cont} we get $u\equiv \underline{w}$, we conclude the proof that $u$ satisfies~\eqref{eq:Bellman_cont}.

Second, we prove that $w$ is also a solution to~\eqref{eq:Bellman_cont}. Noting that for any $x\in E$ the sequence $(\overline{v}_n^t(x))_{n\in \mathbb{N}}$ is non-increasing, using Proposition~\ref{pr:hat_w} and monotone convergence theorem, for any $t\geq 0$ and $x\in E$, we get
\begin{align*}
e^{w(x)}&=\inf_{n\in \mathbb{N}}e^{\overline{v}_{n+1}^t(x)}\\
&=\inf_{\tau}\inf_{n\in \mathbb{N}}\mathbb{E}_x\left[e^{\int_0^{\tau \wedge t} g(X_s) ds+1_{\{\tau< t \}}G(X_\tau)+1_{\{\tau\geq t\}}\overline{v}_{n}^t(X_1)}\right]\\
&=\inf_{\tau} \mathbb{E}_x\left[e^{\int_0^{\tau \wedge t} g(X_s) ds+1_{\{\tau< t \}}G(X_\tau)+1_{\{\tau\geq  t \}}w(X_t)}\right]=e^{\Phi_t w(x)},
\end{align*}
thus $w$ is a solution to~\eqref{eq:Bellman_cont}. 

\medskip
\noindent
\textit{Proof of \eqref{p:3}.} Recall that if $v$ is a solution to~\eqref{eq:Bellman_cont}, then $\Phi_t v= v$, for any $t\geq 0$.  Thus, recalling~\eqref{eq:underline_w_cont} and~\eqref{eq:overline_w_cont}, inductively we get $\underline{v}_n^t(x)\leq v(x)\leq \overline{v}_n^t(x)$ for any $t\geq 0$, $n\in \mathbb{N}$, and $x\in E$. Hence, letting $n\to\infty$ and using Proposition~\ref{pr:hat_w} we get \eqref{p:3}. 
\end{proof}

\begin{remark}
It should be noted that combining Theorem~\ref{th:equiv_cont} with Theorem~\ref{th:Bellman_cont} we get a possible numerical approximation scheme for extremal solutions to the Bellman equation. More specifically, we get that the map $u$, which is the smallest solution to~\eqref{eq:Bellman_cont}, could be approximated by finite horizon optimal stopping value functions $\underline{w}_T$ as $T\to\infty$. Also, note that in the Step 2 of the proof of Proposition~\ref{pr:finite_hor} we discuss a possible iterative procedure to approximate $\underline{w}_T$. Similar relations hold for the map $w$, which could be approximated by $\overline{w}_T$ as $T\to\infty$. 
\end{remark}

Based on Theorem~\ref{th:Bellman_cont} we get the following corollary.
\begin{corollary}\label{pr:equality_cont}
Let $u$ and $w$ be given by~\eqref{eq:opt_stop_unb_cont} and~\eqref{eq:opt_stop_unb_cont2}, respectively. Then, the following are equivalent
\begin{enumerate}
\item We get $u\equiv w$;
\item There is a unique solution to the Bellman equation~\eqref{eq:Bellman_cont} in the class of measurable functions $v$ satisfying $0\leq v(\cdot)\leq G(\cdot)$.
\end{enumerate}
\end{corollary}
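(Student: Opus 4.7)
The plan is to derive both implications directly from Theorem~\ref{th:Bellman_cont}, observing that $u$ and $w$ are themselves members of the admissible class $\{v : 0 \le v(\cdot) \le G(\cdot)\}$ of solutions.

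First I would verify that both $u$ and $w$ lie in that admissible class. Nonnegativity follows from $g(\cdot),G(\cdot)\ge 0$. The upper bound $u(x)\le G(x)$ is obtained by plugging $\tau\equiv 0$ into the infimum defining $u$ in \eqref{eq:opt_stop_unb_cont}, and $w(x)\le G(x)$ follows in the same way from \eqref{eq:opt_stop_unb_cont2} using that $\tau\equiv 0$ is bounded. Combined with \eqref{eq:ineq_u_w} this yields the chain $0\le u(x)\le w(x)\le G(x)$ for all $x\in E$, and Theorem~\ref{th:Bellman_cont}\eqref{p:2} says both are solutions to \eqref{eq:Bellman_cont}.

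For the implication $u\equiv w \Rightarrow$ uniqueness, I would take any measurable $v$ solving \eqref{eq:Bellman_cont} with $0\le v\le G$. Theorem~\ref{th:Bellman_cont}\eqref{p:3} gives $u(\cdot)\le v(\cdot)\le w(\cdot)$, and the assumed identity $u\equiv w$ forces $v\equiv u\equiv w$. Conversely, if the Bellman equation has at most one solution in the admissible class, then applying this uniqueness to the two admissible solutions $u$ and $w$ provided by the previous paragraph immediately gives $u\equiv w$.

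There is no real obstacle here; the argument is a one-line consequence of the minimal/maximal characterization in Theorem~\ref{th:Bellman_cont}, the only technical point being to record the elementary fact that $u,w$ themselves satisfy the two-sided bound $0\le\cdot\le G$ so that they qualify as admissible solutions to which the uniqueness hypothesis applies.
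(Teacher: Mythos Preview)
Your argument is correct and is precisely the intended one: the paper states this corollary immediately after Theorem~\ref{th:Bellman_cont} with only the remark ``Based on Theorem~\ref{th:Bellman_cont} we get the following corollary,'' and your write-up simply spells out that deduction. The only content is the sandwich $u\le v\le w$ from part~\eqref{p:3} together with the observation that $u,w$ themselves are admissible solutions, exactly as you have it.
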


In the next proposition we study the properties of continuous solutions to the Bellman equation~\eqref{eq:Bellman_cont}. This may be seen as a continuous time analogue of Lemma~\ref{lm:bellman1}. Note that, in contrast to the discrete time case, here we additionally require continuity of $v$. 
\begin{proposition}\label{pr:Bellman_cont_opt}
Let $v$ be a continuous solution to~\eqref{eq:Bellman_cont} satisfying $0\leq v(\cdot)\leq G(\cdot)$. Also, let us define
\[
\tau_v:=\inf\{ t\geq 0: v(X_t)\geq G(X_t)\}.
\]
Then, the infimum in~\eqref{eq:Bellman_cont} is attained for the stopping time $\tau_v$, i.e. for any $x\in E$ and $T\geq 0$ we get
\begin{equation}\label{eq:Bellman_cont_opt}
e^{v(x)}=\mathbb{E}_x\left[\exp\left(\int_0^{\tau_v \wedge T} g(X_s) ds+1_{\{\tau_v< T \}}G(X_{\tau_v})+1_{\{\tau_v\geq  T \}}v(X_T)\right)\right].
\end{equation}
Moreover, the process
\begin{equation}\label{eq:z_v}
z_v(t):=\exp\left(\int_0^{t}g(X_s)ds+v(X_{ t})\right), \quad t\geq 0,
\end{equation}
is a submartingale and $z_v(t\wedge \tau_v)$, $t\geq 0$, is a martingale.
\end{proposition}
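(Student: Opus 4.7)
The plan is to establish the following three steps in order: (i) the process $(z_v(t))_{t\geq 0}$ is a submartingale; (ii) $\tau_v$ is a stopping time with $v(X_{\tau_v})=G(X_{\tau_v})$ on $\{\tau_v<\infty\}$; and (iii) the identity $\mathbb{E}_x[z_v(\tau_v\wedge T)]=e^{v(x)}$ for every $T\geq 0$ and $x\in E$, which is precisely~\eqref{eq:Bellman_cont_opt} once one splits the indicator on $\{\tau_v<T\}$. Once these are in hand, the martingale property of $(z_v(t\wedge \tau_v))$ is automatic: it is a submartingale by (i) and Doob's optional stopping, and its expectation is the constant $e^{v(x)}$ by (iii), which forces equality in the conditional submartingale inequality.

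For (i), I would fix $s,h\geq 0$ and apply~\eqref{eq:Bellman_cont} at the point $X_s$ with horizon $h$ and the admissible choice $\tau=+\infty$ to obtain $e^{v(X_s)}\leq \mathbb{E}_{X_s}[e^{\int_0^h g(X_u)du+v(X_h)}]$; multiplying by $e^{\int_0^s g(X_u)du}$ and using the Markov property yields $z_v(s)\leq \mathbb{E}_x[z_v(s+h)\mid \mathcal{F}_s]$. For (ii), the continuity of $v$ and $G$ together with $v\leq G$ make the set $\{v\geq G\}=\{v=G\}$ closed in $E$, so $\tau_v$ is the first entry time of the right-continuous process $X$ into a closed set, hence a stopping time, and $v(X_{\tau_v})=G(X_{\tau_v})$ on $\{\tau_v<\infty\}$ follows from the right-continuity of $X$ and the continuity of $v$ and $G$.

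Step (iii) is the crux, and the main obstacle. The inequality $\mathbb{E}_x[z_v(\tau_v\wedge T)]\geq e^{v(x)}$ follows at once from (i) via Doob's optional stopping on the bounded stopping time $\tau_v\wedge T$; splitting on $\{\tau_v<T\}$ and using $v(X_{\tau_v})=G(X_{\tau_v})$ rewrites $z_v(\tau_v\wedge T)$ as the integrand on the right-hand side of~\eqref{eq:Bellman_cont_opt}. For the reverse inequality I would identify $(z_v(t))_{t\leq T}$ with the value process of the finite-horizon stopping problem on $[0,T]$ having obstacle $Y_s=e^{\int_0^s g(X_u)du+G(X_s)}$ and terminal payoff $e^{\int_0^T g(X_u)du+v(X_T)}$. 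Because $v=\Phi_r v$ for \emph{every} $r\geq 0$, the Markov property makes the remaining value from state $X_s$ over $[s,T]$ equal to $v(X_s)$, so $(z_v(t))$ is exactly the Snell envelope of this problem; its first meeting time with the obstacle, namely $\tau_v\wedge T$, is then an optimal stopping time, which gives $\mathbb{E}_x[z_v(\tau_v\wedge T)]=e^{v(x)}$. Concretely, this Snell-envelope step can be carried out by adapting the iterative / penalisation scheme that yields Proposition~\ref{pr:finite_hor} in Appendix~\ref{S:proofs} from the terminal costs $0$ and $G$ to the terminal cost $v$, exploiting that the $\Phi_r$-iterates starting from $v$ stay at $v$. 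Unlike in the discrete case (Lemma~\ref{lm:bellman1}), where the infimum in the Bellman equation ranges over only two options and optimality of $\tau_v$ can be read off pointwise, here the infimum runs over a continuum of stopping times, and the continuity of $v$---which is precisely what makes $\{v=G\}$ closed and $\tau_v$ attain the boundary value---is indispensable for invoking the Snell-envelope argument.
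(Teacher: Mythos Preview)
Your proposal is correct and follows essentially the same route as the paper. The paper's proof is a direct application of Lemma~\ref{lm:opt_fin} (in Appendix~\ref{S:proofs}) with the terminal cost $h=v$: since $v=\Phi_t v$ for every $t$, the finite-horizon value $v_T$ with terminal cost $v$ satisfies $v_T\equiv v$, so joint continuity of $(T,x)\mapsto v_T(x)$ is trivial and Lemma~\ref{lm:opt_fin} immediately delivers optimality of $\tau_v\wedge T$, the submartingale property of $z_v$, and the martingale property of $z_v(\cdot\wedge\tau_v)$; your step~(iii) is precisely the Snell-envelope content of that lemma, so rather than ``adapting the scheme behind Proposition~\ref{pr:finite_hor}'' you can simply invoke Lemma~\ref{lm:opt_fin} directly.
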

\begin{proof}
For any $T\geq 0$ let us define
\[
e^{v_T(x)}=\inf_{\tau\leq T}\mathbb{E}_x\left[e^{\int_0^{\tau} g(X_s) ds+1_{\{\tau< T \}}G(X_\tau)+1_{\{\tau=T \}}v(X_T)}\right]
\]
and note that by~\eqref{eq:Bellman_cont} in fact we have $v_T\equiv v$ for any $T\geq 0$. In particular, we get that the map $(T,x)\mapsto v_T(x)$ is continuous. Hence, using Lemma~\ref{lm:opt_fin}, we get that the stopping time 
\begin{equation}\label{eq:Bellman_cont_optstop}
\tau_T:=\inf\{t\geq 0: v_{T-t}(X_t)\geq G(X_t)\}\wedge T=\tau_v\wedge T
\end{equation}
is optimal for $e^{v_T}$. Thus, for any $x\in E$ and $T\geq 0$, we get
\begin{align*}
e^{v(x)}=e^{v_T(x)}&=\mathbb{E}_x\left[e^{\int_0^{\tau_T} g(X_s) ds+1_{\{\tau_T< T \}}G(X_{\tau_T})+1_{\{\tau_T = T \}}v(X_T)}\right]\\
&=\mathbb{E}_x\left[e^{\int_0^{\tau_v \wedge T} g(X_s) ds+1_{\{\tau_v< T \}}G(X_{\tau_v})+1_{\{\tau_v \geq T \}}v(X_T)}\right]
\end{align*}
and~\eqref{eq:Bellman_cont_opt} holds. Finally, using Lemma~\ref{lm:opt_fin} again we also get the submartingale property of $z_v(t)$, $t\geq 0$, and the martingale property of $z_v(t\wedge \tau_v)$, $t\geq 0$. 
\end{proof}

In the following lemma we show when a continuous solution to the Bellman equation may be expressed as an expectation of the stopped value process. 

\begin{lemma}
Let $v$ be a continuous solution to~\eqref{eq:Bellman_cont} such that $0\leq v(\cdot)\leq G(\cdot)$. Also, let $\tau_v$ be as in Proposition~\ref{pr:Bellman_cont_opt}. Then, we get
\[
e^{v(x)}=\mathbb{E}_x\left[e^{\int_0^{\tau_v } g(X_s) ds+G(X_{\tau_v})}\right], \quad x\in E
\]
if and only if
\[
\lim_{T\to\infty}\mathbb{E}_x\left[1_{\{\tau_v\geq  T \}}e^{\int_0^{T} g(X_s) ds+v(X_T)}\right]=0, \quad x\in E.
\]
\end{lemma}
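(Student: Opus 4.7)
The plan is to use Proposition~\ref{pr:Bellman_cont_opt} as the starting point and simply decompose the identity there according to whether $\tau_v < T$ or $\tau_v \geq T$. Writing
\[
A(T) := \mathbb{E}_x\left[1_{\{\tau_v < T\}} e^{\int_0^{\tau_v} g(X_s)ds + G(X_{\tau_v})}\right], \quad B(T) := \mathbb{E}_x\left[1_{\{\tau_v \geq T\}} e^{\int_0^T g(X_s)ds + v(X_T)}\right],
\]
formula~\eqref{eq:Bellman_cont_opt} gives $e^{v(x)} = A(T) + B(T)$ for every $T \geq 0$. Since $T \mapsto 1_{\{\tau_v < T\}} e^{\int_0^{\tau_v} g(X_s)ds + G(X_{\tau_v})}$ is monotone non-decreasing, monotone convergence yields
\[
\lim_{T \to \infty} A(T) = \mathbb{E}_x\left[1_{\{\tau_v < \infty\}} e^{\int_0^{\tau_v} g(X_s)ds + G(X_{\tau_v})}\right].
\]
Consequently $B(T)$ has a limit as $T \to \infty$ and $e^{v(x)} = \lim_{T \to \infty} A(T) + \lim_{T \to \infty} B(T)$.

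For the forward implication, assume $e^{v(x)} = \mathbb{E}_x\left[e^{\int_0^{\tau_v} g(X_s)ds + G(X_{\tau_v})}\right]$. Since $g(\cdot) \geq c > 0$, on $\{\tau_v = \infty\}$ the integrand on the right is $+\infty$, so finiteness of $e^{v(x)} \leq e^{G(x)} < \infty$ forces $\mathbb{P}_x[\tau_v < \infty] = 1$. Therefore $\lim_{T \to \infty} A(T) = \mathbb{E}_x\left[e^{\int_0^{\tau_v} g(X_s)ds + G(X_{\tau_v})}\right] = e^{v(x)}$, so $\lim_{T \to \infty} B(T) = 0$.

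For the reverse implication, assume $\lim_{T \to \infty} B(T) = 0$. Using $v \geq 0$ and $g \geq c > 0$, we get the lower bound
\[
B(T) \geq \mathbb{E}_x\left[1_{\{\tau_v \geq T\}} e^{cT}\right] \geq e^{cT} \mathbb{P}_x[\tau_v = \infty],
\]
so $B(T) \to 0$ forces $\mathbb{P}_x[\tau_v = \infty] = 0$. Then the monotone convergence step above gives $\lim_{T \to \infty} A(T) = \mathbb{E}_x\left[e^{\int_0^{\tau_v} g(X_s)ds + G(X_{\tau_v})}\right]$, and combining with $e^{v(x)} = \lim_T A(T) + \lim_T B(T) = \lim_T A(T)$ yields the desired identity.

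There is no real obstacle here; the whole argument is essentially bookkeeping around the finite-horizon representation~\eqref{eq:Bellman_cont_opt}. The only subtle point worth stating carefully is that the assumption $g(\cdot) \geq c > 0$ from~\eqref{cost_functions} plays a double role: it rules out contributions from $\{\tau_v = \infty\}$ in the forward direction, and it produces the exponential lower bound $e^{cT}\mathbb{P}_x[\tau_v=\infty]$ needed for the reverse direction.
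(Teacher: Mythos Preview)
Your proof is correct and follows essentially the same decomposition as the paper: split~\eqref{eq:Bellman_cont_opt} into the $\{\tau_v<T\}$ and $\{\tau_v\ge T\}$ parts, apply monotone convergence to the first, and read off the equivalence. The only difference is organisational: the paper proves $\mathbb{P}_x[\tau_v<\infty]=1$ once and unconditionally, using Fatou's lemma on the identity~\eqref{eq:Bellman_cont_opt} to get $\mathbb{E}_x[e^{c\tau_v}]\le e^{v(x)}<\infty$, and then the limiting decomposition $e^{v(x)}=\mathbb{E}_x[e^{\int_0^{\tau_v}g+G(X_{\tau_v})}]+\lim_T B(T)$ gives both directions at once; you instead establish almost-sure finiteness of $\tau_v$ separately in each direction using the hypothesis at hand. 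Both work, but the paper's route avoids the case split.
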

\begin{proof}
Let $v$ be a continuous solution to the Bellman equation~\ref{eq:Bellman_cont} satisfying $0\leq v(\cdot)\leq G(\cdot)$. Using Proposition~\ref{pr:Bellman_cont_opt}, for any $x\in E$ and $T\geq 0$, we get
\begin{align}\label{eq:lm:iff:1}
e^{v(x)}&=\mathbb{E}_x\left[e^{\int_0^{\tau_v \wedge T} g(X_s) ds+1_{\{\tau_v< T \}}G(X_{\tau_v})+1_{\{\tau_v\geq  T \}}v(X_T)}\right]\nonumber \\
&=\mathbb{E}_x\left[1_{\{\tau_v< T \}}e^{\int_0^{\tau_v \wedge T} g(X_s) ds+G(X_{\tau_v})}+1_{\{\tau_v\geq  T \}}e^{\int_0^{\tau_v \wedge T} g(X_s) ds+v(X_T)}\right].
\end{align}
Thus, recalling that $g(\cdot)\geq c>0$ and using Fatou Lemma, we get
\begin{equation}\label{eq:lm:iff:2}
\mathbb{E}_x\left[e^{\tau_v c}\right]\leq \mathbb{E}_x\left[\liminf_{T\to\infty}e^{(\tau_v\wedge T) c}\right]\leq \mathbb{E}_x\left[\liminf_{T\to\infty}e^{\int_0^{\tau_v\wedge T} g(X_s)ds}\right] \leq  e^{v(x)}<\infty,
\end{equation}
and, in particular, we get $\mathbb{P}_x[\tau_v<\infty]=1$. Thus, letting $T\to\infty$ in~\eqref{eq:lm:iff:1}, we get
\begin{align*}
e^{v(x)}=\lim_{T\to\infty}\mathbb{E}_x\left[1_{\{\tau_v< T \}}e^{\int_0^{\tau_v } g(X_s) ds+G(X_{\tau_v})}+1_{\{\tau_v\geq  T \}}e^{\int_0^{T} g(X_s) ds+v(X_T)}\right]\\
=\mathbb{E}_x\left[e^{\int_0^{\tau_v } g(X_s) ds+G(X_{\tau_v})}\right]+\lim_{T\to\infty}\mathbb{E}_x\left[1_{\{\tau_v\geq  T \}}e^{\int_0^{T} g(X_s) ds+v(X_T)}\right],
\end{align*}
where the second equality follows from monotone convergence theorem. This concludes the proof.
\end{proof}

Using Proposition~\ref{pr:Bellman_cont_opt} we get the closed-form formula for an optimal stopping time for the function $u$ under the continuity assumption. 

\begin{proposition}\label{pr:opt_u}
Let the function $u$ be given by~\eqref{eq:opt_stop_unb_cont}. Assume that $u$ is continuous. Then the stopping time 
\begin{equation}\label{eq:lm:opt_u}
\tau_u:=\inf\{t\geq 0: u(X_t)\geq G(X_t)\}
\end{equation}
 is optimal for $u$.
\end{proposition}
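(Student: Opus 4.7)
The plan is to invoke Proposition~\ref{pr:Bellman_cont_opt} with $v:=u$. By Theorem~\ref{th:Bellman_cont}\eqref{p:2} the function $u$ is a solution to the Bellman equation~\eqref{eq:Bellman_cont}; by non-negativity of $g$ and $G$ together with the trivial choice $\tau\equiv 0$ in~\eqref{eq:opt_stop_unb_cont} one has $0\leq u(\cdot)\leq G(\cdot)$; and $u$ is continuous by hypothesis. Hence all assumptions of Proposition~\ref{pr:Bellman_cont_opt} are satisfied, so identity~\eqref{eq:Bellman_cont_opt} applies with $\tau_v=\tau_u$.

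From this identity I would draw two consequences. First, keeping only the running-cost term in the exponent and using $g\geq c>0$ together with $G,u\geq 0$, one gets
\[
\mathbb{E}_x\left[e^{c(\tau_u\wedge T)}\right]\leq e^{u(x)}\leq e^{G(x)}<\infty,
\]
so sending $T\to\infty$ and invoking monotone convergence yields $\mathbb{E}_x[e^{c\tau_u}]<\infty$, and in particular $\tau_u<\infty$ $\mathbb{P}_x$-almost surely. Second, restricting~\eqref{eq:Bellman_cont_opt} to the event $\{\tau_u<T\}$ and discarding the non-negative summand $1_{\{\tau_u\geq T\}}u(X_T)$ gives
\[
e^{u(x)}\geq\mathbb{E}_x\left[1_{\{\tau_u<T\}}e^{\int_0^{\tau_u}g(X_s)ds+G(X_{\tau_u})}\right].
\]
Letting $T\to\infty$, the combination of $\mathbb{P}_x[\tau_u<\infty]=1$ and monotone convergence produces
\[
\mathbb{E}_x\left[e^{\int_0^{\tau_u}g(X_s)ds+G(X_{\tau_u})}\right]\leq e^{u(x)}.
\]
The reverse inequality is immediate from the definition of $u$ as an infimum in~\eqref{eq:opt_stop_unb_cont}, so equality holds and $\tau_u$ is optimal.

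I do not foresee a serious obstacle: the substantive work is already packaged in Proposition~\ref{pr:Bellman_cont_opt}, and what remains is the almost-sure-finiteness argument above together with two routine invocations of monotone convergence. The proof parallels but is lighter than the equivalence stated in the preceding lemma, because here only one direction of that equivalence is actually needed; one does not have to exhibit the vanishing of the tail term $\mathbb{E}_x[1_{\{\tau_u\geq T\}}e^{\int_0^{T}g(X_s)ds+u(X_T)}]$, since the inequality $u\leq w$ combined with the minimality characterization of $u$ already closes the estimate.
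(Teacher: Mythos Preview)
Your proof is correct and takes a somewhat different route from the paper's. Both arguments begin the same way: verify that $u$ is a continuous solution of~\eqref{eq:Bellman_cont} with $0\leq u\leq G$, invoke Proposition~\ref{pr:Bellman_cont_opt}, and show $\mathbb{P}_x[\tau_u<\infty]=1$ via the running-cost lower bound $g\geq c>0$. The divergence is in the closing step. The paper uses the identity $u(X_{\tau_u})=G(X_{\tau_u})$ to rewrite the target expectation as $\mathbb{E}_x[z_u(\tau_u)]$, then applies Fatou's lemma and the martingale property of $(z_u(t\wedge\tau_u))$ from Proposition~\ref{pr:Bellman_cont_opt} to bound it above by $e^{u(x)}$. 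You instead split the identity~\eqref{eq:Bellman_cont_opt} over $\{\tau_u<T\}$ and $\{\tau_u\geq T\}$, drop the non-negative second piece, and pass to the limit by monotone convergence. Your approach is arguably more direct: it avoids the explicit appeal to the martingale property and Fatou, extracting everything from the single identity~\eqref{eq:Bellman_cont_opt}. The paper's approach, on the other hand, makes the role of the martingale structure transparent. A minor point: your final sentence about ``the inequality $u\leq w$ combined with the minimality characterization of $u$'' is not actually used anywhere in your argument; the reverse inequality comes simply from $\tau_u$ being an admissible stopping time in~\eqref{eq:opt_stop_unb_cont}.
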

\begin{proof}
By Theorem~\ref{th:Bellman_cont} we know that $u$ satisfies the Bellman equation~\eqref{eq:Bellman_cont}. Also, as in~\eqref{eq:lm:iff:2}, we may show that $\mathbb{P}_x[\tau_u<\infty]=1$ for any $x\in E$. Thus, using  Proposition~\ref{pr:Bellman_cont_opt}, continuity of $u$, Fatou Lemma, and martingale property of the process $(z_u(t\wedge \tau_u))$, we get
\begin{align*}
e^{u(x)}&\leq \mathbb{E}_x\left[e^{\int_0^{\tau_u} g(X_s)ds+G(X_{\tau_u})}\right]=\mathbb{E}_x\left[e^{\int_0^{\tau_u} g(X_s)ds+u(X_{\tau_u})}\right]\\
& \leq \liminf_{t\to\infty}\mathbb{E}_x\left[e^{\int_0^{\tau_u\wedge t} g(X_s)ds+u(X_{\tau_u\wedge t})}\right]=\mathbb{E}_x\left[z_u(0)\right]=e^{u(x)}, \quad x\in E,
\end{align*}
which concludes the proof.
\end{proof}

\begin{remark}\label{rm:opt_u}
Recall that by Remark~\ref{rm:opt_tau_cont} we get that $\hat{\tau}$ from~\eqref{eq:hat_tau} is also optimal for $u$. However, the stopping time $\tau_u$ from~\eqref{eq:lm:opt_u} is smaller than $\hat{\tau}$. Indeed, noting that $u\equiv \underline{w}\geq \underline{w}_T$ for any $T\geq 0$, we get $\tau_u\leq \hat{\tau}$.
\end{remark}

Now, let us define the process
\begin{equation}\label{eq:z_unif}
Z (t):=\exp\left(\int_0^{t}g(X_s)ds+G(X_{t})\right), \quad t\geq 0.
\end{equation}
By analogy to Theorem~\ref{th:suff_disc} we may formulate a sufficient condition for $u\equiv w$. In particular, this gives uniqueness of a solution to~\eqref{eq:Bellman_cont}.

\begin{theorem}\label{th:suff_cont}
Let $u$ and $w$ be given by~\eqref{eq:opt_stop_unb_cont} and~\eqref{eq:opt_stop_unb_cont2}, respectively. Also, let $\hat{\tau}$ be given by~\eqref{eq:hat_tau}. Assume that the process $(Z(t\wedge \hat{\tau}))$, $t\geq 0$, given by~\eqref{eq:z_unif}, is uniformly integrable. Then,
\begin{enumerate}
\item\label{p:th:suff_cont:1} We get $u\equiv w$ and this function is continuous.
\item\label{p:th:suff_cont:2} The stopping time
\begin{equation}\label{eq:pr:suff_cont:over_tau}
\tau_u:=\inf\{t\geq 0: u(X_t)\geq G(X_t)\}
\end{equation}
is optimal for $u$. Also, we get $\tau_u=\lim_{T\to\infty} \overline{\tau}_T$, where $\overline{\tau}_T$ is given by~\eqref{eq:prop:coninuity_w_T:optstop}.
\item\label{p:th:suff_cont:3} The stopping time $\tau_u$ given by~\eqref{eq:pr:suff_cont:over_tau} is also optimal for $w$, i.e. we get
\begin{equation}\label{eq:opt_w_cont}
w(x)=\liminf_{T\to\infty}\ln\mathbb{E}_x\left[e^{\int_0^{\tau_u\wedge T} g(X_s)ds+G(X_{\tau_u\wedge T})}\right], \quad x\in E.
\end{equation}
\end{enumerate}
\end{theorem}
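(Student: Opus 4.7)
The plan is to treat the three items of the theorem in order, exploiting two ingredients already available: by Remark~\ref{rm:opt_tau_cont} the stopping time $\hat{\tau}$ is optimal for $u$, and by general principles $u\le w$.

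For~\eqref{p:th:suff_cont:1}, optimality of $\hat{\tau}$ gives $e^{u(x)}=\mathbb{E}_x[Z(\hat{\tau})]$. The uniform-integrability assumption, via Lemma~\ref{lm:convention}, turns the Fatou inequality into an equality, so
\[
e^{w(x)}\le \liminf_{T\to\infty}\mathbb{E}_x\!\left[Z(\hat{\tau}\wedge T)\right]=\mathbb{E}_x[Z(\hat{\tau})]=e^{u(x)},
\]
where the first inequality is the definition of $w$ applied to $\tau=\hat{\tau}$. Combined with $u\le w$ this yields $u\equiv w$; since $u$ is lower and $w$ is upper semicontinuous by Theorem~\ref{th:equiv_cont}, the common value is continuous.

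For~\eqref{p:th:suff_cont:2}, continuity of $u$ and Proposition~\ref{pr:opt_u} immediately give optimality of $\tau_u$ for $u$. To identify $\tau_u$ with $\lim_{T\to\infty}\overline{\tau}_T$, I would first note that $T\mapsto\overline{w}_T$ is non-increasing and $\overline{w}_T\le G$, so if $\overline{w}_{T_2-t}(X_t)=G(X_t)$ then also $\overline{w}_{T_1-t}(X_t)=G(X_t)$ for $T_1\le T_2$; hence $T\mapsto\overline{\tau}_T$ is non-decreasing. Moreover, at $t=\tau_u$ one has $u(X_{\tau_u})=G(X_{\tau_u})$ (since $u\le G$), and the sandwich $u\le \overline{w}_{T-\tau_u}\le G$ forces $\overline{w}_{T-\tau_u}(X_{\tau_u})=G(X_{\tau_u})$, so $\overline{\tau}_T\le\tau_u$. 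Writing $\tau_\infty:=\lim_{T\to\infty} \overline{\tau}_T\le\tau_u\le\hat{\tau}<\infty$ a.s., I would then pass to the limit $T\to\infty$ in the identity $\overline{w}_{T-\overline{\tau}_T}(X_{\overline{\tau}_T})=G(X_{\overline{\tau}_T})$: on the right, quasi-left-continuity of $X$ and continuity of $G$ give $G(X_{\overline{\tau}_T})\to G(X_{\tau_\infty})$; on the left, the same quasi-left-continuity combined with Dini's theorem (applied to the monotone sequence $\overline{w}_S\downarrow u$ of continuous functions on the compact range of $s\mapsto X_s$ over $[0,\tau_\infty]$) gives convergence to $u(X_{\tau_\infty})$. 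The resulting identity $u(X_{\tau_\infty})=G(X_{\tau_\infty})$ then yields $\tau_u\le\tau_\infty$, closing the identification. This uniform-convergence step is the main technical obstacle, since it bootstraps pointwise monotone convergence into convergence along the moving argument $X_{\overline{\tau}_T}$.

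Finally, for~\eqref{p:th:suff_cont:3}, decompose
\[
\mathbb{E}_x\!\left[Z(\tau_u\wedge T)\right]=\mathbb{E}_x\!\left[1_{\{\tau_u\le T\}}Z(\tau_u)\right]+\mathbb{E}_x\!\left[1_{\{\tau_u> T\}}Z(T)\right].
\]
The first term rises to $\mathbb{E}_x[Z(\tau_u)]=e^{u(x)}=e^{w(x)}$ by monotone convergence and the optimality of $\tau_u$ from~\eqref{p:th:suff_cont:2}. Because $\tau_u\le\hat{\tau}$ by Remark~\ref{rm:opt_u}, the second term is dominated by $\mathbb{E}_x[1_{\{\hat{\tau}>T\}}Z(T)]$, whose liminf vanishes by Lemma~\ref{lm:convention} and the uniform-integrability hypothesis. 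Combining the two pieces gives $\liminf_{T\to\infty}\mathbb{E}_x[Z(\tau_u\wedge T)]=e^{w(x)}$, which is~\eqref{eq:opt_w_cont}.
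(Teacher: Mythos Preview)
Your proposal is correct and follows essentially the same route as the paper's proof: part~(1) is identical; part~(2) uses the same sandwich $u\le\overline{w}_{T-t}\le G$ to get $\overline{\tau}_T\le\tau_u$ and then Dini (together with quasi-left-continuity) to pass to the limit in $\overline{w}_{T-\overline{\tau}_T}(X_{\overline{\tau}_T})=G(X_{\overline{\tau}_T})$; part~(3) is the paper's argument reorganised slightly, working directly with the decomposition $\mathbb{E}_x[Z(\tau_u\wedge T)]=\mathbb{E}_x[1_{\{\tau_u\le T\}}Z(\tau_u)]+\mathbb{E}_x[1_{\{\tau_u>T\}}Z(T)]$ instead of first deducing uniform integrability of $(Z(t\wedge\tau_u))$. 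Two small points of care worth making explicit: in part~(2) the inequality $\overline{\tau}_T\le\tau_u$ only holds on $\{\tau_u\le T\}$ (the paper restricts to this event), though this is harmless in the limit; and the ``compact range'' you invoke for Dini is really the \emph{closure} of the range of the c\`adl\`ag path over $[0,\tau_\infty]$, which is indeed compact.
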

\begin{proof}
For transparency, we prove the claims point by point.

\medskip
\noindent
\textit{Proof of~\eqref{p:th:suff_cont:1}.} Recalling that by Remark~\ref{rm:opt_tau_cont} the stopping time $\hat{\tau}$ given by~\eqref{eq:hat_tau} is optimal for $u$ and using uniform integrability of $(Z(t\wedge \hat{\tau}))$, $t\geq 0$, for any $x\in E$, we get
\begin{align*}
e^{w(x)}\leq \lim_{T\to\infty} \E_x\left[e^{\int_0^{\hat\tau\wedge T} g(X_s)ds+G(X_{\hat\tau\wedge T})}\right]&=\E_x\left[e^{\int_0^{\hat\tau} g(X_s)ds+G(X_{\hat\tau})}\right]=e^{u(x)}.
\end{align*}
Recalling that we always get $u\leq w$, we conclude the proof of $u\equiv w$. Continuity follows from lower semicontinuity of $u$ and upper semicontinuity of $w$; see Theorem~\ref{th:equiv_cont} for details.

\medskip
\noindent
\textit{Proof of~\eqref{p:th:suff_cont:2}.} Note that optimality of $\tau_u$ follows from Proposition~\ref{pr:opt_u} and the fact that $u\equiv w$. Let us now show that
\begin{equation}\label{eq:th:suff_cont:1}
\tau_u=\lim_{T\to\infty} \overline{\tau}_T,
\end{equation}
Recalling Proposition~\ref{pr:finite_hor}, we get that the map $T\mapsto \overline{\tau}_T$ is increasing, hence the limit $\overline{\tau}:=\lim_{T\to\infty} \overline{\tau}_T$ is well-defined. Also, recalling that from~\eqref{eq:lm:iff:2} we get $\mathbb{P}_x\left[\tau_u<\infty\right]=1$ and using the fact that $u\equiv w\equiv \overline{w}\leq \overline{w}_T$ for any $T\geq 0$, on the event $\{\tau_u\leq T\}$ we get
\[
\overline{w}_{T-\tau_u}(X_{\tau_u})\geq u(X_{\tau_u})\geq G(X_{\tau_u}).
\]
Thus, we get $\overline{\tau}_T\leq \tau_u\wedge T$, hence, letting $T\to\infty$, we get $\overline{\tau}\leq \tau_u$. In particular, we get $\mathbb{P}_x\left[\overline{\tau}<\infty\right]=1$, $x\in E$. Also, recalling joint continuity of $(T,x)\mapsto \overline{w}_T(x)$, we get 
\begin{equation}\label{eq:th:suff_cont:2}
\overline{w}_{T-\overline{\tau}_T}(X_{\overline{\tau}_T})=G(X_{\overline{\tau}_T}).
\end{equation} We show that this implies $u(X_{\overline{\tau}})=G(X_{\overline{\tau}})$ and consequently $\tau_u\leq \overline{\tau}$. First, note that from a.s. finiteness of $\overline{\tau}$, we get $(T-\overline{\tau}_T)\to \infty$ as $T\to\infty$. Second, note that for any $T_n\to\infty$ and $x_n\to x$, we get
\[
|\overline{w}_{T_n}(x_n)-\overline{w}(x)|\leq |\overline{w}_{T_n}(x_n)-\overline{w}(x_n)|+|\overline{w}(x_n)-\overline{w}(x)|\to 0, \quad n\to\infty;
\]
this follows from Dini's theorem combined with the fact that $(\overline{w}_{T_n})$ is a sequence of continuous functions converging monotonically to the continuous function $\overline{w}$. Thus, letting $T\to\infty$ in~\eqref{eq:th:suff_cont:2}, we get $\overline{w}(X_{\overline{\tau}})=G(X_{\overline{\tau}})$, which combined with the fact that $u\equiv w \equiv \overline{w}$ concludes the proof of this part.

\medskip
\noindent
\textit{Proof of~\eqref{p:th:suff_cont:3}.} To show~\eqref{eq:opt_w_cont} it is enough to prove uniform integrability of $(Z(t\wedge\tau_u))$, $t\geq 0$, and use~\eqref{p:th:suff_cont:2}. Recalling that $\underline{w}\geq \underline{w}_T$ for any $T\geq 0$, on the set $\{\underline{\tau}_T<T\}$, we get $\underline{w}(X_{\underline{\tau}_T})\geq \underline{w}_{T-\underline{\tau}_T}(X_{\underline{\tau}_T})\geq G(X_{\underline{\tau}_T})$. Thus, letting $T\to\infty$, using continuity of $\underline{w}\equiv u$, and recalling that $\underline{w}\leq G$, we get $\underline{w}(X_{\hat{\tau}})=G(X_{\hat{\tau}})$ and consequently 
\begin{equation}\label{eq:pr:suff_cont:1}
\tau_u\leq \hat\tau.
\end{equation}
From Lemma~\ref{lm:convention} and uniform integrability of $(Z(t\wedge\hat{\tau}))$, $t\geq 0$, for any $x\in E$, we get $\liminf_{T\to\infty} \mathbb{E}_x\left[1_{\{\hat{\tau}>T \}} Z_T \right]=0$. Hence, using~\eqref{eq:pr:suff_cont:1}, for any $x\in E$, we also get $\liminf_{T\to\infty} \mathbb{E}_x\left[ 1_{\{ \overline\tau>T \}} Z_T \right]=0$ and, again by Lemma~\ref{lm:convention}, we conclude the proof of uniform integrability of $Z(t\wedge\tau_u)$, $t\geq 0$. Thus, recalling~\eqref{p:th:suff_cont:1} and~\eqref{p:th:suff_cont:2}, for any $x\in E$, we get
\[
e^{w(x)}=e^{u(x)}=\mathbb{E}_x\left[e^{\int_0^{\tau_u} g(X_s)ds+G(X_{\tau_u})}\right]=\lim_{T\to\infty} \mathbb{E}_x\left[e^{\int_0^{\tau_u\wedge T} g(X_s)ds+G(X_{\tau_u\wedge T})}\right],
\]
which concludes the proof.
\end{proof}

\begin{remark} In Theorem~\ref{th:suff_cont} continuity of $u$ was a consequence of the identity $u\equiv w$. However, if we know in advance that $u$ is continuous, we may obtain the results of Theorem~\ref{th:suff_cont} under weaker conditions. Namely, following the proof of Theorem~\ref{th:suff_cont}, we can see that, assuming continuity of $u$, one may replace uniform integrability of $Z(t\wedge\hat{\tau})$, $t\geq 0$, by uniform integrability of $Z(t\wedge{\tau_u})$, $t\geq 0$, where $\tau_u:=\inf\{t\geq 0:u(X_t)\geq G(X_t)\}$. Note that by Remark~\ref{rm:opt_u} the latter condition is less restrictive as $\tau_u\leq \hat{\tau}$.
\end{remark}

If the function $G$ is bounded, using Theorem~\ref{th:suff_cont} we may recover the results from~\cite{JelPitSte2019a}; see Theorem 15 therein.
\begin{corollary}\label{cor:identity_bounded}
If $G$ is bounded, then $u\equiv w$ and this function is continuous.
\end{corollary}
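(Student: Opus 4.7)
The plan is to invoke Theorem~\ref{th:suff_cont}, so the whole task reduces to verifying uniform integrability of the family $(Z(t\wedge\hat{\tau}))_{t\geq 0}$, where $\hat{\tau}$ is the stopping time from~\eqref{eq:hat_tau}, which is optimal for $u$ by Remark~\ref{rm:opt_tau_cont}. Once uniform integrability is established, Theorem~\ref{th:suff_cont}\eqref{p:th:suff_cont:1} immediately yields both $u\equiv w$ and continuity of the common value.

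The key observation is that the boundedness of $G$ together with the non-negativity of both $g$ and $G$ produces a trivial integrable majorant. Setting $M:=\sup_{y\in E}G(y)<\infty$ and using $g\geq 0$, the map $t\mapsto \int_0^{t\wedge \hat{\tau}}g(X_s)\,ds$ is non-decreasing in $t$, so for every $t\geq 0$,
\[
Z(t\wedge \hat{\tau}) = \exp\!\left(\int_0^{t\wedge \hat{\tau}}g(X_s)\,ds+G(X_{t\wedge \hat{\tau}})\right) \leq e^{M}\exp\!\left(\int_0^{\hat{\tau}}g(X_s)\,ds\right) \leq e^{M}\,Z(\hat{\tau}),
\]
where the last inequality uses $G\geq 0$ to re-insert $G(X_{\hat{\tau}})$ into the exponential. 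Since $\hat{\tau}$ is optimal for $u$ and $0\leq u(x)\leq G(x)\leq M$, one has $\mathbb{E}_x[Z(\hat{\tau})]=e^{u(x)}\leq e^{M}<\infty$, so $Z(\hat{\tau})$ is $\mathbb{P}_x$-integrable. The dominated family $(Z(t\wedge \hat{\tau}))_{t\geq 0}$ is therefore uniformly integrable, and Theorem~\ref{th:suff_cont} closes the argument.

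There is essentially no hard part: the corollary falls out by pairing the boundedness of $G$ with the monotonicity of the running cost integral. The only subtleties worth flagging are the use of $G\geq 0$ when passing from $\exp(\int_0^{\hat{\tau}}g)$ to $Z(\hat{\tau})$, and the appeal to Remark~\ref{rm:opt_tau_cont} to identify $\mathbb{E}_x[Z(\hat{\tau})]$ with $e^{u(x)}$.
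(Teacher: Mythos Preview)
Your proof is correct and follows essentially the same route as the paper: both establish uniform integrability of $(Z(t\wedge\hat{\tau}))_{t\geq 0}$ by exhibiting the integrable dominant $e^{\|G\|}\exp\bigl(\int_0^{\hat{\tau}}g(X_s)\,ds\bigr)$ and then invoke Theorem~\ref{th:suff_cont}. The only minor difference is how integrability of the dominant is justified: you cite optimality of $\hat{\tau}$ from Remark~\ref{rm:opt_tau_cont} to get $\mathbb{E}_x[Z(\hat{\tau})]=e^{u(x)}\leq e^{\|G\|}$ directly, while the paper re-derives the same bound by Fatou's lemma applied to the finite-horizon approximations $e^{\underline{w}_n(x)}$; your shortcut is legitimate since Remark~\ref{rm:opt_tau_cont} is already available at this point.
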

\begin{proof}
Recalling~\eqref{eq:hat_tau} and the following discussion, for any $x\in E$,  we get that for $\mathbb{P}_x$ almost all $\omega \in \Omega$, starting from some $n\in \mathbb{N}$ (depending on $\omega$), the sequence $(\underline{\tau}_n(\omega))$ is non-increasing. Thus, using right-continuity of $X$, we get $G(X_{\hat{\tau}}) = \lim_{n\to\infty} 1_{\{\underline{\tau}_n<n \}}G(X_{\underline{\tau}_n})$. Consequently, recalling non-negativity of $G$, Proposition~\ref{pr:finite_hor}, and using Fatou Lemma, for any $x\in E$, we get
\begin{align*}
\mathbb{E}_x\left[e^{\int_0^{\hat\tau} g(X_s) ds}\right]&\leq \mathbb{E}_x\left[e^{\int_0^{\hat\tau} g(X_s) ds+G(X_{\hat{\tau}})}\right] \\
&= \mathbb{E}_x\left[\lim_{n\to\infty}e^{\int_0^{\underline\tau_n} g(X_s) ds+1_{\{\underline{\tau}_n<n \}}G(X_{\underline{\tau}_n})}\right]\\
&\leq \lim_{n\to\infty}\mathbb{E}_x\left[e^{\int_0^{\underline\tau_n} g(X_s) ds+1_{\{\underline{\tau}_n<n \}}G(X_{\underline{\tau}_n})}\right]\\
&=\lim_{n\to\infty}e^{\underline{w}_n(x)}\leq e^{G(x)}<\infty.
\end{align*}
Combining this with the inequality $Z(t\wedge \hat{\tau})\leq e^{\int_0^{\hat\tau} g(X_s) ds} e^{\Vert G\Vert}$, $t\geq 0$, we get that by bounded convergence theorem the process $(Z(t\wedge \hat{\tau}))$, $t\geq 0$, is uniformly integrable. Consequently, using Theorem~\ref{th:suff_cont} we conclude the proof.
\end{proof}

\section{Reference examples}\label{S:app2}

In this section we provide a series of examples illustrating our assumptions and results. In particular, we provide a more general criterion for Assumptions~\eqref{IF}--\eqref{Feller}. Also, we show explicit formulae for multiple solutions to the Bellman equation.

\subsection{Examples for Assumptions~\eqref{IF}--\eqref{Feller}}\label{SS:assumptions}

In this section we comment on Assumptions~\eqref{IF} and~\eqref{Feller}. We show that they may be deduced from a more general condition:
\begin{enumerate} 
\item[(\namedlabel{IF_2}{$\mathcal{B}1$})] For any $T\geq 0$ and a compact set $K\subseteq E$ we get 
\[
\lim_{m\to\infty}\sup_{x\in K} \mathbb{E}_x\left[\zeta_T 1_{\{\zeta_T\geq m\}}\right]=0,
\]
where $\zeta_T=\sup_{t\in [0,T]} e^{G(X_t)}$.
\end{enumerate}
Condition~\eqref{IF_2} may be seen as a stronger form of integrability for $\zeta_T$. Namely, it requires that the tail of $\zeta_T$ is $\mathbb{P}_x$-integrable uniformly in $x$ from compact set.  Exemplary dynamics satisfying~\eqref{IF_2} is shown in Example~\ref{ex4}.

Let us now show that~\eqref{IF_2} implies~\eqref{IF} and~\eqref{Feller}.
\begin{lemma}\label{lm:assumpt_rel}
Assume~\eqref{IF_2}. Then~\eqref{IF} and~\eqref{Feller} hold.
\end{lemma}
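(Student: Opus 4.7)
The assertion (A2) is essentially immediate from (B1): for any $x\in E$ the singleton $K=\{x\}$ is compact, so (B1) yields $\mathbb{E}_x[\zeta_T 1_{\{\zeta_T\geq m\}}]\to 0$ as $m\to\infty$. In particular this quantity is finite for some $m_0$, which combined with the trivial estimate $\mathbb{E}_x[\zeta_T 1_{\{\zeta_T< m_0\}}]\leq m_0$ gives $\mathbb{E}_x[\zeta_T]<\infty$.

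\textbf{Truncation scheme for (A3).} Fix $T\geq 0$ and a continuous $h$ with $0\leq h\leq G$, and put
\[
F(x):=\mathbb{E}_x\!\left[e^{\int_0^T g(X_s)\,ds+h(X_T)}\right],\qquad F_m(x):=\mathbb{E}_x\!\left[e^{\int_0^T g(X_s)\,ds+h(X_T)\wedge \ln m}\right].
\]
I would establish continuity of $F$ by proving (i) each $F_m$ is continuous and (ii) $F_m\to F$ uniformly on every compact $K\subset E$; local compactness of $E$ then transfers uniform-on-compacts convergence of continuous functions into continuity of $F$. For (ii), using $e^{h(X_T)}\leq e^{G(X_T)}\leq \zeta_T$ together with boundedness of $g$, one gets the clean estimate
\[
0\leq F(x)-F_m(x)\leq e^{T\|g\|_\infty}\,\mathbb{E}_x\!\left[\zeta_T 1_{\{\zeta_T>m\}}\right],
\]
and by (B1) the right-hand side tends to $0$ uniformly for $x\in K$.

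\textbf{Continuity of $F_m$ and main obstacle.} For fixed $m$ the exponent $\int_0^T g(X_s)\,ds+h(X_T)\wedge \ln m$ is bounded and, because $g$ is bounded continuous while $h\wedge \ln m$ is bounded continuous, depends continuously on the c\`{a}dl\`{a}g path away from the $\mathbb{P}_x$-null set of trajectories with a jump at $T$. The $C_0$-Feller property ensures $\mathbb{P}_{x_n}\Rightarrow \mathbb{P}_x$ weakly on the Skorokhod space whenever $x_n\to x$, hence continuity of $F_m$ follows from the portmanteau theorem; alternatively one can expand $\exp(\int_0^T g(X_s)\,ds)$ as a power series and handle each term by iterated applications of the semigroup on $C_0(E)$, combined with a compactly supported cutoff to pass from $C_b$ to $C_0$. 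This step -- extending the $C_0$-Feller continuity from bare transition expectations to Feynman--Kac weighted expectations of bounded continuous terminal functions -- is the main technical obstacle. The rest of the argument, namely (A2) and the uniform truncation bound, reduces to a direct application of (B1).
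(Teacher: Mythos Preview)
Your proposal is correct and follows essentially the same route as the paper: both arguments handle (A2) by the trivial tail-plus-bounded-part decomposition, and both obtain (A3) by truncating $h$ to a bounded function, proving continuity of the truncated Feynman--Kac expectation, and then showing the truncation error vanishes uniformly on compacts via (B1). The only cosmetic difference is that the paper cites external references (Gikhman--Skorokhod and Palczewski--Stettner) for continuity of the bounded truncated functional, whereas you sketch the underlying argument via weak convergence on Skorokhod space or the power-series/iterated-semigroup technique; both are standard and valid.
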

\begin{proof}

For~\eqref{IF}, it is enough to note that for any $T\geq 0$, $x\in E$, and sufficiently large $m\in \mathbb{N}$, we get
\[
\mathbb{E}_x\left[\zeta_T\right]= \mathbb{E}_x\left[ \zeta_T 1_{\{\zeta_T< m\}}\right]+\mathbb{E}_x\left[ \zeta_T 1_{\{\zeta_T\geq  m\}}\right]\leq m+1<\infty.
\] 

For~\eqref{Feller}, let $T\geq 0$, $x\in E$, $(x_n)\to x$, and $h:E\mapsto \mathbb{R}_+$ be continuous and such that $h(\cdot)\leq G(\cdot)$. Let $\Gamma\subseteq E$ be a compact set satisfying $x\in \Gamma$ and $(x_n)\subset \Gamma$. We get
\begin{multline*}
\left|\mathbb{E}_x\left[e^{\int_0^T g(X_s)+h(X_T)}\right]-\mathbb{E}_{x_n}\left[e^{\int_0^T g(X_s)+h(X_T)}\right]\right| \\
\leq \left|\mathbb{E}_x\left[e^{\int_0^T g(X_s)+h(X_T)\wedge m}\right]-\mathbb{E}_{x_n}\left[e^{\int_0^T g(X_s)+h(X_T)\wedge m}\right]\right| \nonumber\\
 + 2\sup_{y\in \Gamma} \left|\mathbb{E}_y\left[e^{\int_0^T g(X_s)+h(X_T)}\right]-\mathbb{E}_{y}\left[e^{\int_0^T g(X_s)+h(X_T)\wedge m}\right]\right|.
\end{multline*}
Also, combining Lemma 4 from~\cite[Section II.5]{GikSko2004} and Corollary 2.2 from~\cite{PalSte2010}, we get that the map $x\mapsto \mathbb{E}_x\left[e^{\int_0^T g(X_s)+h(X_T)\wedge m}\right]$ is continuous for any $m\in \mathbb{N}$. Thus, to conclude the proof it is enough to show that 
\begin{equation}\label{eq:lm:assumpt_rel:1}
\sup_{y\in \Gamma} \left|\mathbb{E}_y\left[e^{\int_0^T g(X_s)+h(X_T)}\right]-\mathbb{E}_{y}\left[e^{\int_0^T g(X_s)+h(X_T)\wedge m}\right]\right|\to 0, \quad m\to \infty.
\end{equation}
Using~\eqref{IF_2}, for any $\varepsilon >0$ and sufficiently big $m\in \mathbb{N}$, we get
\begin{align*}
\sup_{y\in \Gamma} \mathbb{E}_y\left[e^{\int_0^T g(X_s)}\left|e^{h(X_T)}-e^{h(X_T)\wedge m}\right|\right]& \leq 2\sup_{y\in \Gamma} \mathbb{E}_y\left[e^{\int_0^T g(X_s)}e^{h(X_T)}1_{\{h(X_T)\geq m\}}\right]  \nonumber\\
&\leq 2e^{T\Vert g\Vert} \sup_{y\in \Gamma} \mathbb{E}_y \left[\zeta_T 1_{\{\zeta_T\geq m\}}\right]\leq \varepsilon.
\end{align*}
Thus, we get~\eqref{eq:lm:assumpt_rel:1}, which concludes the proof.
\end{proof}

Let us now show the exemplary dynamics satisfying Condition~\eqref{IF_2}.

\begin{example}\label{ex4}
Let $E=\mathbb{R}$, $G(x)=|x|$, and the process $(X_t)$ be a Brownian motion. Also, let $K\subseteq E$ be a compact set and $L_K:=\sup_{x\in K} |x|$. Note that under $\mathbb{P}_x$ we get $X_t=x+W_t$, where $W$ is a standard Brownian motion (starting from $0$). For the notational convenience, for any $T\geq 0$, we set $\zeta_T:=\sup_{t\in [0,T]} e^{|X_t|}$ and $S_T:=\sup_{t\in [0,T]} |W_t|$, $T\geq 0$. We show that for any $T\geq 0$ we get
\begin{equation}\label{eq:ex4:0.5}
\lim_{n\to\infty} \sup_{x\in K} \mathbb{E}_x\left[\zeta_T 1_{\{\zeta_T\geq e^n\}}\right] =0.
\end{equation}

Note that, for any $x\in E$, $T\geq 0$, and $n\in \mathbb{N}$, we get
\begin{align}\label{eq:ex4:0.75}
\sup_{x\in K} \mathbb{E}_x\left[\zeta_T 1_{\{\zeta_T\geq e^n\}}\right]&=\sup_{x\in K} \mathbb{E}_x\left[\sup_{t\in [0,T]}e^{|x+W_t|} 1_{\{\sup_{t\in [0,T]}|x+W_t|\geq n\}}\right]\nonumber\\
& \leq \sup_{x\in K} e^{|x|} \mathbb{E}_x\left[e^{S_T} 1_{\{ S_T\geq n-L_K\}}\right].
\end{align}
Moreover, we get that $\mathbb{E}_x\left[e^{S_T} 1_{\{ S_T\geq n-L_K\}}\right]$ is independent of $x\in E$. Thus, noting that $\sup_{x\in K} e^{|x|}<\infty$, to conclude the proof of~\eqref{eq:ex4:0.5}, it is enough to show 
\begin{equation}\label{eq:ex4:1}
\mathbb{E}_0\left[e^{S_T} \right]<\infty.
\end{equation}
Indeed, noting that $\mathbb{E}_0\left[e^{S_T} 1_{\{ S_T< n-L_K\}}\right]$ converges increasingly to $\mathbb{E}_0\left[e^{S_T} \right]$ as $n\to\infty$, and 
\[
\mathbb{E}_0\left[e^{S_T} \right]=\mathbb{E}_0\left[e^{S_T} 1_{\{ S_T< n-L_K\}}\right]+\mathbb{E}_0\left[e^{S_T} 1_{\{ S_T\geq n-L_K\}}\right],
\]
from~\eqref{eq:ex4:1} we get $\lim_{n\to\infty} \mathbb{E}_0\left[e^{S_T} 1_{\{ S_T\geq n-L_K\}}\right]=0$, which together with~\eqref{eq:ex4:0.75} implies~\eqref{eq:ex4:0.5}.

Let us now show~\eqref{eq:ex4:1}. Recalling that $(-W)$ is also a Brownian motion, we get
\[
\mathbb{E}_0\left[e^{S_T} \right]\leq \mathbb{E}_0\left[e^{\max\left( \sup_{t\in [0,T]} W_t, \sup_{t\in [0,T]} (-W_t)\right)}\right]\leq 2 \mathbb{E}_0\left[e^{\sup_{t\in [0,T]} W_t}\right].
\]
Recall that by reflection principle the distribution of $\sup_{t\in [0,T]} W_t$ is equal to the distribution of $|W_T|$; see e.g. Proposition 3.7 in~\cite[Chapter III]{RevYor1999} for details. Thus, we get
\[
\mathbb{E}_0\left[e^{S_T} \right]\leq 2 \mathbb{E}_0\left[e^{|W_T|}\right]<\infty,
\]
which concludes the proof.
\end{example}

\subsection{Examples for the Bellman equation}\label{SS:Bellman}

In this section we provide a series of computable examples related to the Bellman equation. In particular, we show a dynamics with a non-unique solution to this equation.

First, we show an example, where there is a strict inequality between the maps $u$ and $w$ given by~\eqref{eq:opt_stop_unb_disc} and~\eqref{eq:opt_stop_unb_disc2}. Recall that we already showed $u\leq w$.

\begin{example}\label{ex:1}
Let $E=\{1,2,3,\ldots\}$, $g\equiv c>0$ and $G(x)=x$, $x\in E$. Let $(X_n)_{n\in \mathbb{N}}$ be an i.i.d. sequence of discrete Pareto random variables, i.e.
\[
\mathbb{P}[X_n=k]=\frac{1}{C k^2}, \quad n\in \mathbb{N},\, k\in E,
\]
where $C:=\sum_{k=1}^\infty \frac{1}{k^2}=\frac{\pi^2}{6}$ is a normalizing constant. Recalling \eqref{eq:opt_stop_unb_disc2} and~\eqref{eq:opt_stop_unb_disc}, let us consider
\begin{align*}
u(x) & :=\inf_{\tau\in \mathcal{T}_0} \ln \mathbb{E}_x\left[e^{c\tau+X_{\tau}}\right], \quad x\in E;\\
w(x) & :=\inf_{\tau\in \mathcal{T}_0} \liminf_{n\to\infty} \ln \mathbb{E}_x\left[e^{c(\tau\wedge n)+X_{\tau\wedge n}}\right], \quad x\in E.
\end{align*}
Recalling~\eqref{eq:ineq_u_w}, we get $u(x)\leq w(x)$, $x\in E$. Let us show that this inequality may be strict.

First, we show that $w(x)=x$, $x\in E$. Recalling Theorem~\ref{th:und_over_prob}, we get 
\[
w(x)=\lim_{n\to\infty}\overline{w}_n(x),
\]
where $\overline{w}_n(x):=\inf_{\tau\leq n} \ln \mathbb{E}_x\left[e^{c\tau+X_{\tau}}\right]$, $x\in E$. Also, using Proposition~\ref{pr:disc_und_over}, for any $n\in \mathbb{N}$ and $x\in E$, we get $e^{\overline{w}_{n+1}(x)}=Se^{\overline{w}_{n}}(x)$, where the operator $S$ is given by $Sh(x):=e^x\wedge e^c \mathbb{E}_x[h(X_1)]$ and $\overline{w}_0(x)=x$. Noting that $\mathbb{E}_x[e^{X_1}]=+\infty$, $x\in E$, inductively we get $\overline{w}_n(x)=x$, $x\in E$, and consequently $w(x)=x$, $x\in E$.

Second, note that for $x\in E\setminus\{1\}$, $\tau_1:=\inf\{n\geq 0:X_n=1\}$, $p_1:=\mathbb{P}[X_1=1]$, and $c>0$ satisfying $c<-\ln(1-p_1)\approx 0.94$, we get
\begin{align*}
e^{u(x)} \leq \mathbb{E}_x\left[e^{c\tau_1+X_{\tau_1}}\right]=e\mathbb{E}_x\left[e^{c\tau_1}\right]&=e\sum_{k=1}^{\infty} e^{kc} p_1(1-p_1)^{k-1}\\
&= p_1 e^{c+1}\frac{1}{1-e^c(1-p_1)}=:B<\infty.
\end{align*}
Consequently, for $x>\ln B$, we get
\[
u(x)\leq \ln B<x=w(x),
\]
thus, there is a strict inequality between $u$ and $w$.

To better explain this situation, we directly show that the process 
\[
Z_{T\wedge\tau_1}:=e^{c\tau_1\wedge T+X_{\tau_1\wedge T}}, \quad T\in \mathbb{N}
\] 
is not uniformly integrable, cf. Lemma~\ref{lm:convention}. It is enough to show that
\[
L:=\lim_{n\to\infty} \sup_{T\in \mathbb{N}} \mathbb{E}\left[e^{c\tau_1\wedge T+X_{\tau_1\wedge T}}1_{\{e^{c\tau_1\wedge T+X_{\tau_1\wedge T}}\geq e^n \}}\right]=+\infty.
\]
Note that
\begin{align*}
L & \geq \lim_{n\to\infty} \sup_{T\in \mathbb{N}} e^n \mathbb{P}\left[c\tau_1\wedge T+X_{\tau_1\wedge T} \geq  n \right]\\
& = \lim_{n\to\infty} \sup_{T\in \mathbb{N}} e^n \left( \mathbb{P}\left[\tau_1 \leq T, c\tau_1+X_{\tau_1} \geq  n \right]+ \mathbb{P}\left[\tau_1 > T, cT+X_{T} \geq  n \right]\right)\\
& \geq \lim_{n\to\infty} \sup_{T\in \mathbb{N}} e^n \mathbb{P}\left[\tau_1 > T, X_{T} \geq  n-cT \right].
\end{align*}
Thus, setting $A:=\{1\}\subseteq E$ and for any $n\in \mathbb{N}$ setting $T=[0.5 n]$, where $[x]$ stands for the integer part of $x\in\mathbb{R}$, we get
\begin{align*}
L &\geq \lim_{n\to\infty} e^n \mathbb{P}\left[\tau_1 > [0.5 n], X_{[0.5 n]} \geq  n-c[0.5 n] \right]\\
& \geq \lim_{n\to\infty} e^n \mathbb{P}\left[X_1\in A^c, \ldots, X_{[0.5 n]-1}\in A^c, X_{[0.5 n]} = [n-c[0.5 n]]+1\right]\\
& = \lim_{n\to\infty} e^n (1-p_1)^{[0.5 n]-1} \frac{1}{C ([n-c[0.5 n]]+1)^2}.
\end{align*}
Let $a_n:=e^n (1-p_1)^{[0.5 n]-1} \frac{1}{C ([n-c[0.5 n]]+1)^2}$, $n\in \mathbb{N}$, and note that for $b_n:=e^n (1-p_1)^{0.5 n-1} \frac{1}{C n^2(1-0.5 c)^2}$, $n\in \mathbb{N}$, we get $\lim_{n\to\infty} \frac{a_n}{b_n}=1$. Also, we get 
\[
\lim_{n\to\infty} \frac{b_{n+1}}{b_n}=\lim_{n\to\infty} e(1-p_1)^{0.5} \frac{n^2}{(n+1)^2}=e(1-p_1)^{0.5}.
\]
Thus, noting that $e(1-p_1)^{0.5}\approx 1.7>1$, we get $b_n\to\infty$, hence $a_n\to \infty$ and $L=+\infty$. Consequently, the process $(Z_{T\wedge\tau_1})$, ${T\in \mathbb{N}}$, is not uniformly integrable.
\end{example}

In the next example we show explicit formulae for distinct solutions to the Bellman equation in the discrete time setting.

\begin{example}\label{ex:3}
Let $E=[0,+\infty)\subset\mathbb{R}$, $g\equiv c>0$ and $G(x)=x$, $x\in E$. Let $\alpha\in [0,1]$ and $(X_n)_{n\in \mathbb{N}}$ be a time-homogeneous Markov process with a transition probability
\[
\mathbb{P}_x[X_1=0]=\alpha, \, \mathbb{P}_x[X_1=x+1]=1-\alpha, \quad x\in E.
\]
Recalling \eqref{eq:opt_stop_unb_disc} and~\eqref{eq:opt_stop_unb_disc2}, let us consider
\begin{align*}
u(x) & :=\inf_{\tau\in \mathcal{T}_0} \ln \mathbb{E}_x\left[e^{c\tau+X_{\tau}}\right], \quad x\in E;\\
w(x) & :=\inf_{\tau\in \mathcal{T}_0} \liminf_{n\to\infty} \ln\mathbb{E}_x\left[e^{c(\tau\wedge n)+X_{\tau\wedge n}}\right], \quad x\in E.
\end{align*}
Also, let $K:=\ln \left(\frac{\alpha e^c}{1-(1-\alpha)e^c}\right)$; note that this constant is well-defined if $(1-\alpha)e^c<1$. We show that within this model
\begin{itemize}
\item If $\alpha \in [0,1-e^{-c}]$, then $u(x)=x=w(x)$, $x\in E$;
\item If $\alpha \in (1-e^{-c},1-e^{-c-1}]$, then $u(x)=x\wedge K$ and $w(x)=x$, $x\in E$;
\item If $\alpha \in (1-e^{-c-1},1]$, then $u(x)=x\wedge K=w(x)$, $x\in E$.
\end{itemize}
In particular, recalling Theorem~\ref{th:und_over_prob}, for $\alpha \in (1-e^{-c},1-e^{-c-1}]$ we get two distinct solutions to the Bellman equation
\begin{equation}\label{eq:esc_Bellman}
e^{v(x)}=e^x\wedge e^c\left(\alpha e^{v(0)}+(1-\alpha)e^{v(x+1)}\right),\quad  x\in E.
\end{equation}
Namely, we get that both $u$ and $w$ satisfy~\eqref{eq:esc_Bellman}, but $u(x)<w(x)$ for $x>K$. In fact, in this case we may construct infinitely many solutions to~\eqref{eq:esc_Bellman}; see Remark~\ref{rm:ex3:2}. Also, it should be noted that for $\alpha \in (1-e^{-c-1},1]$ both functions $u$ and $w$ are bounded despite the fact that $G$ is unbounded from above.

Note that $u(x)=x$ corresponds to the situation when instantaneous stopping is optimal; similar relation holds for $w$. Thus, we can see that for $\alpha$ small enough (relative to $c$), immediate stopping is optimal. However, for sufficiently big $\alpha$ it is optimal to wait until the process returns to zero; see the argument below for details.

For transparency, we split the argument into four parts: (1) proof of $u(x)=x\wedge K$, $x\in E$, for $\alpha \in (1-e^{-c},1]$; (2) proof of $u(x)=x$, $x\in E$, for $\alpha \in [0,1-e^{-c}]$; (3) proof of $w(x)=x$, $x\in E$ for $\alpha \in [0,1-e^{-c-1}]$; (4) proof of $w(x)=x\wedge K$, $x\in E$ for $\alpha \in (1-e^{-c-1},1]$.

\medskip
\noindent
\textit{Part (1)} We show that $u(x)=x\wedge K$, $x\in E$, for $\alpha \in (1-e^{-c},1]$. Recalling Theorem~\ref{th:und_over_prob} it is enough to show that $\lim_{n\to\infty} \underline{w}_n(x)=x\wedge K$, $x\in E$, where the sequence $(\underline{w}_n)_{n\in \mathbb{N}}$ is recursively defined as
\[
\underline{w}_0(x):=0, \quad e^{\underline{w}_{n+1}(x)}:=e^x\wedge e^c(\alpha e^{\underline{w}_n(0)}+(1-\alpha) e^{\underline{w}_n(x+1)}),\quad n\in \mathbb{N},\, x\in E.
\]
Recalling Proposition~\ref{pr:disc_und_over}, for any $n\in \mathbb{N}$ and $x\in E$, we get $\underline{w}_n(x)\geq \underline{w}_0(x)=0$. Thus, noting that $e^c(\alpha e^{\underline{w}_n(0)}+(1-\alpha) e^{\underline{w}_n(1)})\geq e^c>1$, we get $\underline{w}_n(0)=0$ for any $n\in \mathbb{N}$, and consequently
\[
e^{\underline{w}_{n+1}(x)}=e^x\wedge e^c(\alpha +(1-\alpha) e^{\underline{w}_n(x+1)}),\quad n\in \mathbb{N},\, x\in E.
\]
Let us now show that
\begin{equation}\label{eq:w_2_int_rek}
e^{\underline{w}_{n+1}(x)}=e^x \wedge e^{c_{n+1}}, \quad n\in \mathbb{N},\, x\in E,
\end{equation}
where 
\begin{equation}\label{eq:w_2_c_n}
e^{c_n}:=\sum_{k=1}^{n-1} \alpha (1-\alpha)^{k-1} e^{kc}+(1-\alpha)^{n-1} e^{cn}, \quad n=1,2,\ldots
\end{equation}  
First, note that by direct calculation we get $e^{c_{n+1}}\geq e^{c_n}$ and recalling that $(1-\alpha)e^c<1$, we get $e^{c_n}\to e^K$ as $n\to\infty$. To show~\eqref{eq:w_2_int_rek}, we proceed by induction. For $n=1$, we get
\[
e^{\underline{w}_{1}(x)}= e^x\wedge e^c = e^x \wedge e^{c_1}, \quad x\in E.
\]
Let us now assume that the claim holds for some $n\geq 1$. Then, for $x+1\geq c_{n}$, by direct calculation, we get
\[
e^{\underline{w}_{n+1}(x)}= e^x\wedge e^c(\alpha +(1-\alpha) e^{c_n})=e^x \wedge e^{c_{n+1}}, \quad x\in E.
\] 
Also, for $x<c_{n}-1\leq c_{n+1}-1\leq K-1$, we get
\[
e^{\underline{w}_{n+1}(x)}=e^x\wedge e^c(\alpha +(1-\alpha) e^{x}).
\]
Thus, to conclude the proof it is enough to show $e^c(\alpha +(1-\alpha) e^{x})\geq e^x$ for $x\in [0,K-1]$. Let us define $h(x):=e^c (\alpha +(1-\alpha)e^{x+1})-e^x$, $x\in E$. Noting that  $h'(x)=e^x(e^{c+1} (1-\alpha)-1)$ we get that $h$ is monotonic. This together with the estimates
\begin{align*}
h(0)& =e^c \alpha+e^c(1-\alpha)e-1\geq e^c(\alpha+(1-\alpha))-1=e^c-1>0;\\
h(K-1)& =e^c\alpha+e^c(1-\alpha)e^{K} - e^{K-1}=\frac{\alpha e^c}{1-(1-\alpha) e^c	}(1-e^{-1})>0
\end{align*}
shows $h(x)\geq 0$ for $x\in [0,K-1]$. Thus, for $x<c_{n}-1\leq K-1$, we get
\[
e^{\underline{w}_{n+1}(x)}=e^x\wedge e^c(\alpha +(1-\alpha) e^{x})=e^x=e^x\wedge e^{c_{n+1}},
\]
which concludes the proof of~\eqref{eq:w_2_int_rek}. Letting $n\to\infty$ in~\eqref{eq:w_2_int_rek} and recalling Theorem~\ref{th:und_over_prob} we get $u(x)=x\wedge K$.

\medskip
\noindent
\textit{Part (2)} We show that $u(x)=x$, $x\in E$, for $\alpha \in [0,1-e^{-c}]$. Noting that $(1-\alpha)e^c\geq 1$ and recalling~\eqref{eq:w_2_c_n}, we get that $c_n\to\infty$ as $n\to \infty$. Thus, to conclude the proof it is enough to show $\underline{w}_{n+1}(x):=x\wedge c_{n+1}$, $n\in \mathbb{N}$, $x\in E$. As previously, for $x+1\geq c_{n}$, the claim follows from direct calculation. For $x+1< c_{n}$ let us define $h(x):=e^c (\alpha +(1-\alpha)e^{x+1})-e^x$, $x\in E$ and note that
\[
h'(x)=e^x(e^{c+1} (1-\alpha)-1)\geq 0, \quad x\in E,
\]
as $e^{c+1}(1-\alpha)>e^c(1-\alpha)\geq 1$. This, together with the inequality $h(0)>0$ shows $h(x)\geq 0$, $x\in E$. Consequently, we get$\underline{w}_{n+1}(x):=x\wedge c_{n+1}$, thus letting $n\to\infty$, we get $u(x)=x$, $x\in E$.

\medskip
\noindent
\textit{Part (3)} We show that $w(x)=x$, $x\in E$, for $\alpha \in [0,1-e^{-c-1}]$. Recalling Theorem~\ref{th:und_over_prob} it is enough to show $\lim_{n\to\infty} \overline{w}_n(x)=x$, $x\in E$, where the sequence $(\overline{w}_n)$ is recursively defined as
\[
\overline{w}_0(x):=x, \quad e^{\overline{w}_{n+1}(x)}:=e^x\wedge e^c(\alpha e^{\overline{w}_n(0)}+(1-\alpha) e^{\overline{w}_n(x+1)}),\quad n\in \mathbb{N}, x\in E.
\]
Noting that $\alpha \in [0,1-e^{-c-1}]$ implies $(e^{c+1} (1-\alpha)-1)\geq 0$, we get
\[
e^x(e^{c+1} (1-\alpha)-1)\geq -\alpha e^c, \quad x\in E.
\]
This inequality is equivalent to $e^c(\alpha+(1-\alpha)e^{x+1})\geq e^x$, $x\in E$, which implies
\begin{equation}\label{eq:ex3:eq1}
e^x\wedge e^c(\alpha e^0+(1-\alpha)e^{x+1}) = e^x, \quad x\in E.
\end{equation}
Using~\eqref{eq:ex3:eq1}, inductively we get $\overline{w}_n(x)=x$ for any $x\in E$. Thus $\lim_{n\to\infty}\overline{w}_n(x)=x$, $x\in E$, and recalling Theorem~\ref{th:und_over_prob} we get $w(x)=x$, $x\in E$.

\medskip
\noindent
\textit{Part (4)} We show that $w(x)=x\wedge K$, $x\in E$, for $\alpha \in (1-e^{-c-1},1]$. Recalling that in this case $u(x)=x\wedge K$ and $u(x)\leq w(x)$, $x\in E$, it is enough to show
\begin{equation}\label{eq:ex3:part4}
\liminf_{n\to\infty} \mathbb{E}_x\left[e^{c\tau_K\wedge n+X_{\tau_K\wedge n}}\right]=e^{x\wedge K}, \quad x\in E,
\end{equation}
where $\tau_K=\inf\{n\geq 0:X_n\in [0,K]\}$. For $x\in [0,K]$ we get $\mathbb{P}_x[\tau_K=0]=1$ and consequently $\mathbb{E}_x\left[e^{c\tau_{K}+X_{\tau_{K}}}\right]=e^x$. For $x>K$ we get 
\[
\mathbb{P}_x[\tau_K=\inf\{n\geq 0:X_n=0\}]=1.
\]
Thus, for $x>K$ and $n\geq 1$, we get
\begin{align*}
\mathbb{E}_x\left[e^{c\tau_K\wedge n+X_{\tau_K\wedge n}}\right] & = \sum_{k=1}^n \mathbb{E}_x\left[1_{\{\tau_K=k \}}e^{ck +X_{k}}\right]+\sum_{k=n+1}^\infty \mathbb{E}_x\left[1_{\{\tau_K=k \}}e^{cn+X_n}\right]\\
& = \sum_{k=1}^n \alpha (1-\alpha)^{k-1} e^{ck}+\sum_{k=n+1}^\infty \alpha(1-\alpha)^{k-1} e^{cn+x+n}.
\end{align*}
Noting that $\sum_{k=n+1}^\infty \alpha (1-\alpha)^{k-1} =(1-\alpha)^n$ and $(1-\alpha)^n e^{n(c+1)}\to 0$ as $n\to\infty$, we get
\[
\liminf_{n\to\infty}\mathbb{E}_x\left[e^{c\tau_K\wedge n+X_{\tau_K\wedge n}}\right] = \lim_{n\to\infty} \sum_{k=1}^n \alpha (1-\alpha)^{k-1} e^{ck} = e^K,\quad x>K,
\]
which concludes the proof of~\eqref{eq:ex3:part4}.
\end{example}

\begin{remark}\label{rm:ex3}
Let $\underline{\tau}:=\inf\{n\in \mathbb{N}: u(X_n)=G(X_n)\}$ and let 
\[
Z_n:=\exp\left(\sum_{i=0}^{n-1}g(X_i)+G(X_n)\right).
\]
Using the argument leading to~\eqref{eq:ex3:part4} we may show that the process $(Z_{n\wedge \underline{\tau}})$, ${n\in \mathbb{N}}$, is uniformly integrable if and only if $\alpha\in [0,1-e^{-c}]\cup (1-e^{-c-1},1]$. Thus, in this case the condition from Theorem~\ref{th:suff_disc} is also necessary for the equality $u\equiv w$.

Next, let $\overline{\tau}:=\inf\{n\in \mathbb{N}: w(X_n)=G(X_n)\}$. One may show that the process $(Z_{n\wedge \overline{\tau}})$, ${n\in \mathbb{N}}$, is uniformly integrable for any $\alpha\in [0,1]$. In particular, for $\alpha \in (1-e^{-c},1-e^{-c-1}]$, we get that uniform integrability of $(Z_{n\wedge \overline{\tau}})$, ${n\in \mathbb{N}}$, does not imply the equality of $u$ and $w$; see Remark~\ref{rm:lm:suff_disc:2}.
\end{remark}

\begin{remark}\label{rm:ex3:2}
Consider the model from Example~\ref{ex:3} with $\alpha \in (1-e^{-c},1-e^{-c-1}]$. Define the function $v:E\mapsto \mathbb{R}$ by
\[v(x):=
\begin{cases}
x, & x\in [0,K]\cup \mathbb{N},\\
K, & \text{otherwise}.
\end{cases}
\]
We show that $v$ is also a solution to the Bellman equation~\eqref{eq:esc_Bellman}. Indeed, noting that $v(x)=w(x)$ for $x\in \mathbb{N}$, where $w(x):=x$, $x\in E$, and recalling that $w$ is a solution to~\eqref{eq:esc_Bellman}, we get
\begin{align*}
e^{v(x)}=e^{w(x)}&=e^x\wedge e^c\left(\alpha e^{w(0)}+(1-\alpha)e^{w(x+1)}\right)\\
& =e^x\wedge e^c\left(\alpha e^{v(0)}+(1-\alpha)e^{v(x+1)}\right), \quad x\in \mathbb{N}.
\end{align*}
Similarly, noting that $v(x)=u(x)$ for $x\in E\setminus \mathbb{N}$, where $u(x):=x\wedge K$, $x\in E$, we get
\[
e^{v(x)}=e^x\wedge e^c\left(\alpha e^{v(0)}+(1-\alpha)e^{v(x+1)}\right), \quad x\in E\setminus \mathbb{N}.
\]
Consequently, $v$ is a solution to~\eqref{eq:esc_Bellman} and it is different from $u$ and $w$; cf. Theorem~\ref{th:und_over_prob}. Also, note that $v$ is discontinuous. In fact, using similar logic we may construct infinitely many (discontinuous) solutions to~\eqref{eq:esc_Bellman}. 
\end{remark}

We conclude this section with the example for the non-uniqueness of a solution to the continuous time Bellman equation.

\begin{example}\label{ex:5}
In this example we use the dynamics from Example~\ref{ex:3} to get a piecewise deterministic (piecewise constant) continuous time Markov process $X$ on the state space $E:=[0,+\infty)$. In a nutshell, under the measure $\mathbb{P}_x$, the process $X$ starts at $x\in E$ and stays at this state up to the exponentially distributed time $\tau_1$. At $\tau_1$, the process is subject to the immediate jump, with after jump state equals to $0$ with probability $\alpha$ and equals to $x+1$ with probability $(1-\alpha)$. Then, the process stays at the new state with independent exponentially distributed time and the procedure repeats.

Let us now provide more details on the process construction. First, let $(Y_n)$ be a discrete time Markov process with dynamics studied in Example~\ref{ex:3}, i.e. 
\[
\mathbb{P}_x[Y_1=0]=\alpha, \, \mathbb{P}_x[Y_1=x+1]=1-\alpha, \quad x\in E,
\]
for some $\alpha\in [0,1]$. Also, let $(\tau_n)_{n=1}^\infty$ be an increasing sequence of non-negative random variables. We assume that under any $\mathbb{P}_x$, $x\in E$, the increments $(\tau_{n+1}-\tau_n)$, $n\in \mathbb{N}$, are exponentially distributed with (common) parameter $\lambda>0$; note that here we follow the convention $\tau_0\equiv 0$. Also, we assume that under any $\mathbb{P}_x$, $x\in E$, jump times $(\tau_n)$ are independent of $(Y_n)$. Finally, we define the process $X$ as $X_t:=Y_n$ for $t\in [\tau_n,\tau_{n+1})$. We refer to~\cite{Dav1993} for a more detailed discussion on the piecewise deterministic Markov processes.

By analogy to Example~\ref{ex:3}, we set $g\equiv d$ with $d\in(0,\lambda)$ and $G(x)=x$, $x\in E$. Also, we consider the continuous time optimal stopping problems
\begin{align}
u(x)&:=\inf_{\tau}\ln \mathbb{E}_x[e^{d\tau+X_{\tau}}], \quad x\in E.\label{ex5:ustop}\\
w(x)&:=\inf_{\tau}\liminf_{T\to\infty}\ln \mathbb{E}_x[e^{d(\tau\wedge T)+X_{\tau\wedge T}}], \quad x\in E.\label{ex5:wstop}
\end{align}
Due to the non-negativity of $d$, it is optimal to stop the process only at the times when the process is subject to a jump. Thus, the problem may be embedded in the discrete-time setting with the corresponding Bellman equation of the form
\begin{equation}\label{ex5:eq:Bellman_cont}
e^{v(x)}=e^x\wedge \mathbb{E}_x\left[e^{d\tau_1+v(X_{\tau_1})}\right], \quad x\in E.
\end{equation}
Using independence of $(Y_n)$ and $(\tau_n)$ and the fact that $\tau_1$ is exponentially distributed, for any $x\in E$, we get
\[
\mathbb{E}_x\left[e^{d\tau_1+v(X_{\tau_1})}\right] =\mathbb{E}_x\left[e^{d\tau_1+v(Y_1)}\right]  = \int_0^\infty \lambda e^{-t(\lambda-d)} dt \left(\alpha e^{v(0)}+(1-\alpha)e^{v(x+1)}\right).
\]
Thus, Equation~\eqref{ex5:eq:Bellman_cont} could be rewritten as
\[
e^{v(x)}=e^x\wedge \frac{\lambda}{\lambda-d}\left(\alpha e^{v(0)} +(1-\alpha)e^{v(x+1)}\right), \quad x\in E.
\]
Note that setting $c:=\ln \lambda-\ln (\lambda-d) $, we get
\begin{equation}\label{ex5:eq:Bellman_disc}
e^{v(x)}=e^x\wedge e^c\left(\alpha e^{v(0)} +(1-\alpha)e^{v(x+1)}\right), \quad x\in E,
\end{equation}
which coincides with~\eqref{eq:esc_Bellman}. Thus, recalling the discussion in Example~\ref{ex:3}, we get the continuous time dynamics with multiple solutions to the corresponding Bellman equation. More specifically, using a suitable embedding, it can be shown that solutions to~\eqref{ex5:eq:Bellman_disc} satisfy
\begin{equation}\label{ex5:eq:Bellman_cont2}
e^{v(x)}=\inf_{\tau}\mathbb{E}_x\left[e^{(\tau \wedge t) d +1_{\{\tau< t \}}X_\tau+1_{\{\tau\geq  t \}}v(X_t)}\right], \quad t\geq 0,\,x\in E,
\end{equation}
which is a version of~\eqref{eq:Bellman_cont} corresponding to~\eqref{ex5:ustop} and~\eqref{ex5:wstop}. Since by Example~\ref{ex:3} we get multiple solutions to~\eqref{ex5:eq:Bellman_disc}, we also get multiple solutions to~\eqref{ex5:eq:Bellman_cont2}.
\end{example}


\appendix

\section{Deferred proofs}\label{S:proofs}
In this section we present the proof of Proposition~\ref{pr:finite_hor}. This is an extension of the results from~\cite{JelPitSte2019a}, where the function $G$ is assumed to be bounded from above; see Propositions 10 and 11 therein. Throughout this section we assume~\eqref{cost_functions}--\eqref{Feller}.

For any $n\in \mathbb{N}$ and $T\geq 0$ let us define bounded versions of~\eqref{eq:w^t_approx_from_below} and~\eqref{eq:w^T_approx_from_above} by
\begin{align}
\underline{v}_T^n(x) & :=\inf_{\tau\leq T} \ln\E_x\left[e^{\int_0^\tau g(X_s)ds + 1_{\{\tau<T\}}G(X_\tau)\wedge n }\right],\quad & T\geq 0, \, x\in E,\label{eq:opt_stop_finhor_bound2}\\
\overline{v}_T^n(x) & :=\inf_{\tau\leq T} \ln \E_x\left[e^{\int_0^{\tau} g(X_s)ds+G(X_{\tau})\wedge n}\right], \quad & T\geq 0, \, x\in E.\label{eq:opt_stop_finhor_bound}
\end{align}
We summarise the properties of $\underline{v}_T^n$ and $\overline{v}_T^n$ in the following lemma. For the proof, see Proposition 11 and Remark 12 from~\cite{JelPitSte2019a}.
\begin{lemma}\label{lm:opt_stop_finhor_bound}
Let $n\in \mathbb{N}$ and let the functions $\underline{v}_T^n$ and $\overline{v}_T^n$ be given by~\eqref{eq:opt_stop_finhor_bound2} and~\eqref{eq:opt_stop_finhor_bound}, respectively. Then
\begin{enumerate}
\item The function $(T,x)\mapsto \underline{v}_T^n(x)$ is jointly continuous. Moreover, 
\begin{equation}\label{eq:opt_stop_rule_finhor_bound2}
\underline{\tau}_T^n=\inf\{t\geq 0:\underline{v}_{T-t}^n(X_t)\geq G(X_t)\wedge n\}\wedge T
\end{equation}
is an optimal stopping time for $\underline{v}_T^n$.
\item The function $(T,x)\mapsto \overline{v}_T^n(x)$ is jointly continuous. Moreover, 
\begin{equation}\label{eq:opt_stop_rule_finhor_bound}
\overline{\tau}_T^n=\inf\{t\geq 0:\overline{v}_{T-t}^n(X_t)\geq G(X_t)\wedge n\} 
\end{equation}
is an optimal stopping time for $\overline{v}_T^n$.
\end{enumerate}
\end{lemma}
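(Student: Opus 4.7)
The idea is that the truncation $G(\cdot)\wedge n$ turns an otherwise unbounded problem into the bounded-terminal-cost setting of \cite{JelPitSte2019a}, so the strategy is a reduction: show that Assumptions~\eqref{cost_functions}--\eqref{Feller} are inherited (in fact trivialised) by $G\wedge n$, and then invoke Propositions 10 and 11 of \cite{JelPitSte2019a} verbatim. Since both $\underline{v}_T^n$ and $\overline{v}_T^n$ are defined exactly as in loc.\ cit.\ but with $G\wedge n$ in place of a generic bounded continuous terminal cost, once the hypotheses are checked there is nothing further to prove.

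The first step is the verification of hypotheses. The map $G(\cdot)\wedge n$ is continuous (minimum of two continuous functions), non-negative, and bounded by $n$, so~\eqref{cost_functions} holds with $G$ replaced by $G\wedge n$. For~\eqref{IF}, the corresponding quantity $\sup_{t\in[0,T]}e^{G(X_t)\wedge n}$ is bounded by $e^n$, so integrability is automatic. For~\eqref{Feller}, if $0\leq h(\cdot)\leq G(\cdot)\wedge n$ and $h$ is continuous, then $h$ is a bounded continuous function and $\exp(\int_0^T g(X_s)\,ds + h(X_T))$ is bounded by $e^{T\|g\|+n}$, so continuity of $x\mapsto \E_x[\exp(\int_0^T g(X_s)\,ds + h(X_T))]$ follows from the $C_0$-Feller property and bounded convergence via the standard approximation argument used in~\cite{JelPitSte2019a}.

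The second step is to quote the results. With the hypotheses verified, Proposition 11 of \cite{JelPitSte2019a} applied to $\underline{v}_T^n$ and $\overline{v}_T^n$ gives joint continuity of $(T,x)\mapsto \underline{v}_T^n(x)$ and $(T,x)\mapsto \overline{v}_T^n(x)$, together with the optimality of the stopping rules
\[
\underline{\tau}_T^n=\inf\{t\geq 0:\underline{v}_{T-t}^n(X_t)\geq G(X_t)\wedge n\}\wedge T,\quad \overline{\tau}_T^n=\inf\{t\geq 0:\overline{v}_{T-t}^n(X_t)\geq G(X_t)\wedge n\},
\]
which are precisely~\eqref{eq:opt_stop_rule_finhor_bound2} and~\eqref{eq:opt_stop_rule_finhor_bound}. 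Remark 12 of \cite{JelPitSte2019a} provides the corresponding minor extensions used to handle the different indicator convention for $\underline{v}_T^n$ (the terminal cost is charged only on $\{\tau<T\}$), which is already built into the statement of Proposition 11 there.

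\textbf{Main obstacle.} There is no substantive obstacle beyond bookkeeping: the only delicate point is that the argument in \cite{JelPitSte2019a} relies on iterating a Bellman operator and taking uniform-on-compacts limits in $T$, which requires exactly the three assumptions listed above. Once those are in place, joint continuity and the form of the optimal stopping time follow by the standard Snell-envelope/dynamic-programming reasoning. Thus the essential work is the hypothesis check in the previous paragraph; the rest is a direct citation.
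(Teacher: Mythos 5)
Your proposal is correct and matches the paper's own treatment: the paper offers no independent proof of Lemma~\ref{lm:opt_stop_finhor_bound} but instead simply cites Proposition 11 and Remark 12 of~\cite{JelPitSte2019a}, and your hypothesis check (that $G\wedge n$ is bounded, continuous, and non-negative, so that Assumptions~\eqref{cost_functions}--\eqref{Feller} hold trivially for the truncated terminal cost) is exactly the reasoning that justifies invoking those results.
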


Let us now link the functions $\underline{v}_T^n$ and $\overline{v}_T^n$ with $\underline{w}_T$ and $\overline{w}_T$.
\begin{lemma}\label{lm:fin1}
Let the functions $\underline{w}_T$ and $\overline{w}_T$ be given by~\eqref{eq:w^t_approx_from_below} and~\eqref{eq:w^T_approx_from_above}, respectively. Also, let the sequences $(\underline{v}^n_T)$ and $(\overline{v}^n_T)$ be given by~\eqref{eq:opt_stop_finhor_bound2} and~\eqref{eq:opt_stop_finhor_bound}, respectively. Then, for any $x\in E$ and $T\geq 0$, we get
\begin{align*}
\underline{w}_T(x)=\lim_{n\to\infty} \underline{v}^n_T(x)\quad \text{and} \quad \overline{w}_T(x)=\lim_{n\to\infty} \overline{v}^n_T(x).
\end{align*}
\end{lemma}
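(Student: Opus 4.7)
The plan is to establish both identities simultaneously by showing that the truncation error is uniform in the stopping time. Both $\underline{v}^n_T$ and $\overline{v}^n_T$ are clearly monotone non-decreasing in $n$ (the integrand in the expectation grows with $n$), and each is bounded above by $\underline{w}_T$ and $\overline{w}_T$ respectively. Hence the limits exist and satisfy $\lim_n \underline{v}^n_T(x) \leq \underline{w}_T(x)$ and $\lim_n \overline{v}^n_T(x) \leq \overline{w}_T(x)$ for every $x\in E$. What remains is the reverse inequality.

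The key step is a uniform-in-$\tau$ bound on the truncation error. For any stopping time $\tau \leq T$, I would split according to whether $G(X_\tau) \leq n$ or not and use $e^{G(X_\tau)} - e^{G(X_\tau)\wedge n} \leq e^{G(X_\tau)}\mathbf{1}_{\{G(X_\tau)>n\}}$, together with boundedness of $g$, to obtain
\[
\mathbb{E}_x\!\left[e^{\int_0^\tau g(X_s)ds + G(X_\tau)}\right] - \mathbb{E}_x\!\left[e^{\int_0^\tau g(X_s)ds + G(X_\tau)\wedge n}\right] \;\leq\; e^{T\|g\|_\infty}\,\mathbb{E}_x\!\left[\zeta_T \mathbf{1}_{\{\zeta_T>e^n\}}\right],
\]
and the analogous bound with $1_{\{\tau<T\}}G(X_\tau)$ in place of $G(X_\tau)$. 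By Assumption~\eqref{IF} we have $\mathbb{E}_x[\zeta_T]<\infty$, so dominated convergence gives $\delta_n(x) := e^{T\|g\|_\infty}\,\mathbb{E}_x[\zeta_T \mathbf{1}_{\{\zeta_T>e^n\}}] \to 0$. Crucially, this bound does not depend on $\tau$.

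Taking the infimum over $\tau \leq T$ on both sides of the inequality, for every $n$,
\[
e^{\overline{w}_T(x)} \;\leq\; e^{\overline{v}^n_T(x)} + \delta_n(x),
\qquad
e^{\underline{w}_T(x)} \;\leq\; e^{\underline{v}^n_T(x)} + \delta_n(x),
\]
and sending $n\to\infty$ yields the reverse inequalities $\overline{w}_T(x) \leq \lim_n \overline{v}^n_T(x)$ and $\underline{w}_T(x) \leq \lim_n \underline{v}^n_T(x)$, completing the proof.

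The main (and essentially only) obstacle is obtaining the truncation-error bound uniformly in the stopping time; handling this via the path-maximum envelope $\zeta_T$ (rather than trying to compare near-optimal $\tau_n$ for $\underline{v}^n_T$ with a near-optimal stopping time for $\underline{w}_T$, which would introduce an $n$-dependent stopping time into the error) is what makes the argument work. Everything else is monotonicity and a final squeeze.
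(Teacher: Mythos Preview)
Your proof is correct and takes a genuinely different route from the paper's.

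The paper argues via the optimal stopping times $\underline{\tau}_T^n$ furnished by Lemma~\ref{lm:opt_stop_finhor_bound}: it shows that on the events $A_n=\{\sup_{t\in[0,T]}G(X_t)\leq n\}$ (whose union has full probability) the sequence $(\underline{\tau}_T^n)$ is eventually non-increasing, passes to a limit stopping time $\hat{\underline{\tau}}_T$, and then uses right-continuity of $X$ together with Fatou's lemma to sandwich $e^{\underline{w}_T(x)}$ between $\E_x[\ldots \hat{\underline{\tau}}_T \ldots]$ and $\liminf_n e^{\underline{v}_T^n(x)}$.

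Your argument bypasses optimal stopping times entirely: you bound the truncation error \emph{uniformly} in $\tau\leq T$ by the single quantity $e^{T\Vert g\Vert}\,\E_x[\zeta_T 1_{\{\zeta_T>e^n\}}]$, which tends to zero by dominated convergence under Assumption~\eqref{IF} alone. This is shorter, treats both $\underline{w}_T$ and $\overline{w}_T$ in one stroke, and does not rely on Lemma~\ref{lm:opt_stop_finhor_bound}. The paper's approach, by contrast, yields a limiting stopping time as a byproduct, though that information is not used elsewhere (the optimal stopping time for $\underline{w}_T$ is obtained separately in Lemma~\ref{lm:opt_fin}).
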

\begin{proof}
We present the proof only for $\underline{w}_T$; the proof for $\overline{w}_T$ is analogous and omitted for brevity.

Let us fix $T\geq 0$ and $x\in E$. Also, let us define the family of events $A_n:=\{\sup_{t\in [0,T]} G(X_t)\leq n\}$, $n\in \mathbb{N}$. For any $n\in \mathbb{N}$ we get $A_n\subset A_{n+1}$. Moreover, using c\`adl\`ag property of $X$, continuity of $G$, and the fact that $T<\infty$, we get $\mathbb{P}_x\left[ \cup_{n=1}^\infty A_n\right]=1$.

Recalling Lemma~\ref{lm:opt_stop_finhor_bound} and using right continuity of $X$, on the event $\{\underline{\tau}_T^n<T\}$, we get $\underline{v}^n_{T-\underline{\tau}_T^n}(X_{\underline{\tau}_T^n})\geq G(X_{\underline{\tau}_T^n})\wedge n$. Thus, on the event $A_n\cap \{\underline{\tau}_T^n<T\}$ we get
\[
\underline{v}^{n+1}_{T-\underline{\tau}_T^n}(X_{\underline{\tau}_T^n})\geq \underline{v}^n_{T-\underline{\tau}_T^n}(X_{\underline{\tau}_T^n})\geq G(X_{\underline{\tau}_T^n})\wedge n=G(X_{\underline{\tau}_T^n})\geq G(X_{\underline{\tau}_T^n})\wedge (n+1),
\]
hence $\underline{\tau}_T^{n+1}\leq \underline{\tau}_T^n$ on $A_n\cap \{\underline{\tau}_T^n<T\}$. In fact, we get $\underline{\tau}_T^{n+1}\leq \underline{\tau}_T^n$ on $A_n$; this follows from the fact that on $A_n\cap \{\underline{\tau}_T^n=T\}$ directly from~\eqref{eq:opt_stop_rule_finhor_bound2} we get $\underline{\tau}_T^{n+1}\leq T=\underline{\tau}_T^n$. Thus, acting inductively, for any $k\geq 0$ we get $\underline{\tau}_T^{n+k+1}\leq \underline{\tau}_T^{n+k}$ on $A_n$. Thus, the limit $ \hat{\underline{\tau}}_T:=\lim_{n\to\infty}\underline{\tau}_T^{n}$ is well defined. Then, using right continuity of $X$, finiteness of $G$, and Fatou Lemma we get
\begin{align*}
e^{\underline{w}_T(x)}& \leq \mathbb{E}_x\left[e^{\int_0^{\hat{\underline{\tau}}_T} g(X_s)ds+1_{\{\hat{\underline{\tau}}_T<T \}}G(X_{\hat{\underline{\tau}}_T})}\right] \nonumber\\
& =\mathbb{E}_x\left[\lim_{n\to\infty}e^{\int_0^{\underline{\tau}_T^{n}} g(X_s)ds+1_{\{\underline{\tau}_T^{n}<T\}}G(X_{\underline{\tau}_T^{n}})\wedge n}\right] \nonumber\\
& \leq \liminf_{n\to\infty}\mathbb{E}_x\left[e^{\int_0^{\underline{\tau}_T^{n}} g(X_s)ds+1_{\{\underline{\tau}_T^{n}<T\}}G(X_{\underline{\tau}_T^{n}})\wedge n}\right]=\lim_{n\to\infty} e^{\underline{v}_T^n(x)}\leq e^{\underline{w}_T(x)},
\end{align*}
which concludes the proof.
\end{proof}

Let us now show a useful result characterising an optimal stopping time for the finite horizon stopping problem with possible discontinuity at the terminal point.
\begin{lemma}\label{lm:opt_fin}
Let $h:E\mapsto \mathbb{R}_+$ be a continuous function satisfying $h(\cdot)\leq G(\cdot)$. Also, for any $T\geq 0$, let us define
\[
v_T(x):=\inf_{\tau\leq T} \ln \mathbb{E}_x\left[e^{\int_0^{\tau} g(X_s)ds + 1_{\{\tau<T\}}G(X_{\tau})+1_{\{\tau=T \}}h(X_T) }\right], \quad x\in E.
\]
Assume that the map $(T,x)\mapsto v_T(x)$ is jointly continuous. Then, for any $T\geq 0$ the stopping time
\[
\tau_T:=\inf\{t\geq 0: v_{T-t}(X_t)\geq G(X_t)\}\wedge T
\]
is optimal for $v_T(x)$, $x\in E$. Moreover, for any $T\geq 0$ and $x\in E$, the process
\[
z_T(t):=e^{\int_0^{t\wedge T} g(X_s) ds+v_{T-t\wedge T}(X_{t\wedge T})}, \quad t\geq 0
\]
is a $\mathbb{P}_x$-submartingale and $(z_T(t\wedge {\tau}_T))$, $t\geq 0$, is a $\mathbb{P}_x$-martingale.
\end{lemma}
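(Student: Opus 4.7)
The plan is to approximate $v_T$ from below by a family of bounded-cost problems to which the results of Lemma~\ref{lm:opt_stop_finhor_bound} (Proposition~11 of~\cite{JelPitSte2019a}) apply, and then pass to the limit. For each $n\in\mathbb{N}$ define
\[
v_T^n(x):=\inf_{\tau\leq T}\ln\mathbb{E}_x\left[e^{\int_0^\tau g(X_s)\,ds+1_{\{\tau<T\}}(G(X_\tau)\wedge n)+1_{\{\tau=T\}}(h(X_T)\wedge n)}\right].
\]
Both $G\wedge n$ and $h\wedge n$ are bounded continuous with $h\wedge n\leq G\wedge n$, so a minor adaptation of Lemma~\ref{lm:opt_stop_finhor_bound} (allowing a continuous terminal cost $h\wedge n$ rather than the choices $0$ or $G\wedge n$) delivers joint continuity of $(T,x)\mapsto v_T^n(x)$, optimality of
\[
\tau_T^n:=\inf\{t\geq 0:v_{T-t}^n(X_t)\geq G(X_t)\wedge n\}\wedge T,
\]
the submartingale property of $z_T^n(t):=e^{\int_0^{t\wedge T}g(X_s)\,ds+v_{T-t\wedge T}^n(X_{t\wedge T})}$, and the martingale property of $(z_T^n(t\wedge\tau_T^n))_{t\geq 0}$.

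I would then show $v_T^n\uparrow v_T$ pointwise: the upper bound $v_T^n\leq v_T$ is immediate from the truncations, while the reverse bound follows by evaluating the functional defining $v_T^n$ on an $\varepsilon$-optimal stopping time for $v_T$ and invoking monotone convergence. Combining this with joint continuity of $v_T^n$ from the previous step and joint continuity of $v_T$ from the hypothesis, Dini's theorem upgrades the convergence to locally uniform. Assumption~\eqref{IF} provides the dominating integrable envelope $e^{T\Vert g\Vert}\zeta_T$ for $z_T^n(t)$, so dominated convergence propagates the submartingale inequality for $z_T^n$ to the limit, yielding the submartingale property of $(z_T(t))_{t\geq 0}$.

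The remaining and most delicate step is passing the identity $\mathbb{E}_x[z_T^n(t\wedge\tau_T^n)]=e^{v_T^n(x)}$ together with the corresponding optimality identity at $\tau_T^n$ to the limit. Mirroring the argument of Lemma~\ref{lm:fin1}, I would introduce the events $A_n:=\{\sup_{s\leq T}G(X_s)\leq n\}$, which increase to a set of full $\mathbb{P}_x$-measure by c\`adl\`ag paths of $X$ and continuity of $G$. On $A_n$ the truncation is inactive, and monotonicity $v_T^n\leq v_T^{n+1}$ together with $G\wedge n=G\wedge(n+1)$ yields $\tau_T^{n+1}\leq\tau_T^n$, so $\hat\tau:=\lim_n\tau_T^n$ exists $\mathbb{P}_x$-a.s. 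Using locally uniform convergence $v_T^n\to v_T$, continuity of $v_T$, and right-continuity of $X$, one identifies $\hat\tau$ with $\tau_T$ by verifying $v_{T-\hat\tau}(X_{\hat\tau})=G(X_{\hat\tau})$ on $\{\hat\tau<T\}$ and treating the boundary case separately. Dominated convergence, again invoking the envelope from Assumption~\eqref{IF}, then propagates both identities to $z_T$ and $\tau_T$, delivering optimality of $\tau_T$ and the martingale property of $(z_T(t\wedge\tau_T))_{t\geq 0}$. The main technical difficulty lies in the identification $\hat\tau=\tau_T$ and in controlling the boundary contribution $1_{\{\tau_T^n=T\}}(h(X_T)\wedge n)$ in the limit, where the monotonicity of $\tau_T^n$ inside each $A_n$ is essential.
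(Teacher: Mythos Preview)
Your route is genuinely different from the paper's. The paper works directly with the unbounded payoff: it invokes Fakeev's Snell-envelope theory to get the submartingale property of $z_T$ and the existence of $\varepsilon$-optimal stopping times $\tau_T^\varepsilon$, then lets $\varepsilon\downarrow 0$ (not $n\to\infty$) to produce a limiting stopping time, identifies it with $\tau_T$ via the assumed joint continuity of $(T,x)\mapsto v_T(x)$, and obtains optimality by Fatou. The martingale property then comes from constancy of expectations plus the submartingale inequality. Your truncation scheme, patterned on Lemma~\ref{lm:fin1}, avoids citing Snell-envelope results for the unbounded problem and instead bootstraps from the bounded case; this is more self-contained but carries more limit bookkeeping.

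There is, however, a genuine misstep in your sketch. Your argument for $\lim_n v_T^n\geq v_T$ is stated backwards: evaluating the $v_T^n$-functional on an $\varepsilon$-optimal stopping time for $v_T$ yields an \emph{upper} bound on $e^{v_T^n}$, and monotone convergence then only reconfirms $\limsup_n v_T^n\leq v_T$. To get the missing inequality you must instead take the optimal $\tau_T^n$ for $v_T^n$ (available from the bounded theory) and bound
\[
e^{v_T(x)}\leq \mathbb{E}_x\!\left[e^{\int_0^{\tau_T^n}g+1_{\{\tau_T^n<T\}}G(X_{\tau_T^n})+1_{\{\tau_T^n=T\}}h(X_T)}\right]
\leq e^{v_T^n(x)}+e^{T\Vert g\Vert}\,\mathbb{E}_x\!\left[\zeta_T\,1_{\{\zeta_T>e^n\}}\right],
\]
the last term vanishing by~\eqref{IF}; alternatively run the $A_n$/Fatou argument you describe later, but then do so \emph{before} invoking Dini, since Dini needs the pointwise limit already identified. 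A second, smaller gap: from $v_{T-\tau_T^n}^n(X_{\tau_T^n})\geq G(X_{\tau_T^n})\wedge n$ on $A_n$ and locally uniform convergence you only obtain $\tau_T\leq\hat\tau$, not equality. This is in fact enough: once $(z_T(t\wedge\hat\tau))_t$ is a martingale and $z_T$ is a submartingale, optional sampling with $\tau_T\leq\hat\tau$ gives $\mathbb{E}_x[z_T(t\wedge\tau_T)]=e^{v_T(x)}$ for all $t$, and since $v_{T-\tau_T}(X_{\tau_T})=G(X_{\tau_T})$ on $\{\tau_T<T\}$ and $v_0=h$, both optimality of $\tau_T$ and the martingale property of $(z_T(t\wedge\tau_T))_t$ follow. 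You should make this explicit rather than asserting $\hat\tau=\tau_T$.
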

\begin{proof}
The argument is partially based on the third step of the proof of Proposition 11 in~\cite{JelPitSte2019a}. For transparency, we present it in detail.

We start with showing optimality of $\tau_T$. For $t\in [0,T]$, let us define
\begin{align*}
y_T(t)& :=e^{\int_0^{t} g(X_s)ds + 1_{\{t<T\}}G(X_{t})+1_{\{t=T \}}h(X_T) }.
\end{align*}
Using argument from \cite{Fak1971} one can show that $z_T$ is the Snell envelope of $y_T$. In particular, from Theorem 2 in~\cite{Fak1970} we get that $(z_T(t))$, $t\geq 0$, is a submartingale. Also, using Theorem 4 from~\cite{Fak1970}, we get that
\begin{equation*}
\tau^\vep_T:=\inf\left\{t\geq 0:  z_T(t)\geq -\vep + y_T(t)\right\}
\end{equation*}
is an $\vep$-optimal stopping time for $e^{v_T(x)}$, for any $\vep >0$, $T\geq 0$, and $x\in E$. Thus, setting
\begin{equation}\label{eq:def:tau.nth2}
{\hat{\tau}}^\vep_T:=\inf\left\{t\geq 0:  e^{v_{T-t}(X_t)}\geq (-\vep)\cdot e^{-\int_0^t g(X_s)ds} +e^{G(X_t)}\right\},
\end{equation}
we get $\tau^\vep_T={\hat{\tau}}^\vep_T\wedge T$. Now, noting that ${\hat{\tau}}^{\vep_1}_T\geq {\hat{\tau}}^{\vep_2}_T$, whenever $0\leq \vep_1\leq \vep_2$, we may define 
\[
{\hat{\tau}}_T:=\lim_{\vep \downarrow 0} {\hat{\tau}}^{\vep}_T\wedge T=\lim_{\vep \downarrow 0} \tau^\vep_T.
\]
Let us now show that ${\hat{\tau}}_T=\tau_T$. For any $\epsilon>0$, on the event $\{{\hat{\tau}}^\vep_T<T\}$, recalling \eqref{eq:def:tau.nth2}, continuity of $(T,x)\mapsto v_T(x)$ and $x\mapsto G(x)$, and right-continuity of $(X_t)$, we get
\begin{equation}\label{eq:calc22}
e^{v_{T-{\hat{\tau}}^\vep_T}\left(X_{{\hat{\tau}}^\vep_T}\right)}\geq (-\vep) \cdot e^{-\int_0^{{\hat{\tau}}^\vep_T}g(X_s)ds}+e^{G(X_{{\hat{\tau}}^\vep_T})}.
\end{equation}
Thus, on the set $\{{\hat\tau}_T<T\}$, letting $\vep\downarrow 0$ in~\eqref{eq:calc22}, we get $
e^{v_{T-\hat\tau_T}(X_{\hat\tau_T})}\geq e^{G(X_{\hat\tau_T})}$.
Since $v_T(x)\leq G(x)$, for any $x\in E$ and $T\geq 0$, on the event $\{{\hat\tau}_T<T\}$, we also get $v_{T-\hat\tau_T}(X_{{\hat\tau}_T})= G(X_{{\hat\tau}_T})$.
Recalling definition of ${\tau}_T$, we get ${\tau}_T\leq {\hat\tau}_T$. Noting that $\tau^\vep_T\leq  {\tau}_T$, for any $\vep>0$, and letting $\vep\to 0$, we get ${\tau}_T={\hat\tau}_T$.

Now we show that ${\tau}_T={\hat\tau}_T$ is optimal for $v_T$. Using Fatou Lemma  we get
\begin{align}
\lim_{\vep \to 0} (e^{v_T(x)}+\vep) & \geq \liminf_{\vep \to 0} \E_x \left[ e^{\int_0^{\tau^\vep_T} g(X_s)ds + 1_{\{\tau^\vep_T<T\}}G(X_{\tau^\vep_T}) +1_{\{\tau^\vep_T=T\}}h(X_T) } \right] \nonumber\\
& \geq \E_x\left[\liminf_{\vep \to 0} e^{\int_0^{\tau^\vep_T} g(X_s)ds + 1_{\{\tau^\vep_T<T\}}G(X_{\tau^\vep_T}) +1_{\{\tau^\vep_T=T\}}h(X_T) } \right].\label{eq:part1.222}
\end{align}
Note that $\int_0^{\tau^\vep_T} g(X_s)ds\to \int_0^{\hat\tau_T} g(X_s)ds$ as $\varepsilon \downarrow 0$. Also, recalling monotonicity of $\varepsilon \mapsto \tau^\vep_T$,  on the event $A:=\{\exists \varepsilon \colon \tau_T^\varepsilon =T\}$, we get 
\begin{multline*}
\lim_{\varepsilon \to 0} \left(1_{\{\tau^\vep_T<T\}}G(X_{\tau^\vep_T})+1_{\{\tau^\vep_T=T\}}h(X_T)\right)\\
=\lim_{\varepsilon \to 0} 1_{\{\tau^\vep_T=T\}}h(X_T)= 1_{\{\hat\tau_T=T\}}h(X_T)\\
=1_{\{\hat\tau_T<T\}}G(X_{\hat\tau_T})+1_{\{\hat\tau_T=T\}}h(X_T).
\end{multline*}
Similarly, using quasi-left continuity of $X$ and recalling that $G\geq h$, on the event $A^c=\{\forall \varepsilon \colon \tau_T^\varepsilon <T\}$, we get
\begin{multline*}
\lim_{\varepsilon \to 0} \left(1_{\{\tau^\vep_T<T\}}G(X_{\tau^\vep_T})+1_{\{\tau^\vep_T=T\}}h(X_T)\right)\\
=\lim_{\varepsilon \to 0} G(X_{\tau^\vep_T})=G(X_{\hat\tau_T})\\
\geq 1_{\{\hat\tau_T<T\}}G(X_{\hat\tau_T})+1_{\{\hat\tau=T\}}h(X_T).
\end{multline*}
Thus, from~\eqref{eq:part1.222}, we get
\[
\lim_{\vep \to 0} (e^{v_T(x)}+\vep) \geq \E_x\left[e^{\int_0^{\hat{\tau}_T} g(X_s)ds + 1_{\{\hat{\tau}_T<T\}}G(X_{\hat\tau_T}) +1_{\{\hat{\tau}_T=T\}}h(X_T) } \right]\geq e^{v_T(x)}
\]
and ${\tau}_T={\hat\tau}_T$ is optimal for $v_T$.

Finally, let us show martingale property of $(z_T(t\wedge {\tau}_T))$, $t\geq 0$.
Noting that for any $t\geq 0$ we get ${z}_T(t\wedge {\tau}_T)\leq e^{T \Vert g\Vert} \sup_{t\in [0,T]} e^{G(X_t)}$ and using~\eqref{IF}, we get that the process $(z_T(t\wedge {\tau}_T))$, $t\geq 0$, is uniformly integrable. In particular, recalling that ${\tau}_T$ is optimal for $v_T(x)$, $x\in E$, we get
\begin{align}\label{eq:th:w^t:mart1}
\mathbb{E}_x[z_T(0)]=e^{v_T(x)}&=\mathbb{E}_x\left[e^{\int_0^{{\tau}_T} g(X_s) ds+1_{\{{\tau}_T<T\}}G(X_{{\tau}_T})+1_{\{{\tau}_T=T\}}h(X_T)}\right]\nonumber\\
&=\mathbb{E}_x\left[e^{\int_0^{{\tau}_T} g(X_s) ds+v_{T-{\tau}_T}(X_{{\tau}_T})}\right]\nonumber\\
&=\mathbb{E}_x\left[\lim_{t\to\infty}e^{\int_0^{{\tau}_T\wedge t} g(X_s) ds+v_{T-{\tau}_T\wedge t}(X_{{\tau}_T\wedge t})}\right]\nonumber\\
&=\lim_{t\to\infty} \mathbb{E}_x\left[z_T(t\wedge {\tau}_T)\right].
\end{align}
Also, using submartingale property of $({z}_T(t))$, $t\geq 0$, and Doob optional stopping theorem, for any $t,h\geq 0$ and $x\in E$, we get
\begin{equation}\label{eq:th:w^t:mart2}
z_T(t\wedge {\tau}_T)\leq \mathbb{E}_x\left[z_T((t+h)\wedge {\tau}_T)|\mathcal{F}_t\right].
\end{equation}
Thus, we get $\mathbb{E}_x\left[z_T(t\wedge {\tau}_T)\right]\leq \mathbb{E}_x\left[z_T((t+h)\wedge {\tau}_T)\right]$, which combined with~\eqref{eq:th:w^t:mart1} shows $\mathbb{E}_x\left[z_T(t\wedge {\tau}_T)\right]= \mathbb{E}_x\left[z_T((t+h)\wedge {\tau}_T)\right]$ for any $t,h\geq 0$. Thus, we have equality in~\eqref{eq:th:w^t:mart2}, which concludes the proof.
\end{proof}

Now we are ready to show the proof of Proposition~\ref{pr:finite_hor}.

\begin{proof}[Proof of Proposition~\ref{pr:finite_hor}]

We present the proof only for $\underline{w}_T(x)$; the argument for $\overline{w}_T(x)$ is similar and is omitted for brevity. For transparency, we split the argument into three steps: (1) proof of monotonicity and continuity of $T\mapsto \underline{w}_T(x)$ for fixed $x\in E$; (2) proof of continuity of $x\mapsto \underline{w}_T(x)$ for fixed $T\geq 0$; (3) proof of joint continuity of $(T,x)\mapsto \underline{w}_T(x)$, optimality of $\underline{\tau}_T$ and martingale characterisation.

\medskip
\noindent {\it Step 1.} Monotonicity and continuity of $T\mapsto \underline{w}_T(x)$ for fixed $x\in E$. First, we prove monotonicity property of $T\mapsto \underline w_{T}(x)$.  Let $T,u\geq 0$ and let $\tau_{\vep}\leq T$ be an $\vep$-optimal stopping time for $e^{\underline{w}_{T}(x)}$. Then, using the fact that $g,G\geq 0$ we get
\begin{align}\label{eq:mono_underline_w}
e^{\underline{w}_{T-u}(x)} & \leq \E_x\left[e^{\int_0^{\tau_{\vep}\wedge (T-u)} g(X_s)ds + 1_{\{\tau_{\vep} <T-u\}}G(X_{\tau_{\vep}})}\right]\nonumber\\
&  \leq \E_x\left[e^{\int_0^{\tau_{\vep}} g(X_s)ds + 1_{\{\tau_{\vep} <T\}}G(X_{\tau_{\vep}})}\right] \leq e^{\underline{w}_{T}(x)} +\epsilon.
\end{align}
Letting $\vep\to 0$, we conclude that $T\mapsto \underline w_{T}(x)$ is non-decreasing. 

Second, we show continuity of $T\mapsto \underline{w}_T(x)$. Recalling that by Lemma~\ref{lm:opt_stop_finhor_bound} and Lemma~\ref{lm:fin1}, for any $x\in E$, the function $T\mapsto \underline{w}_T(x)$ is an increasing limit of continuous functions $T\mapsto \underline{v}_T^n(x)$, we get that $T\mapsto \underline{w}_T(x)$ is lower semicontinuous. This, together with the fact that $T\mapsto \underline{w}_T(x)$ is non-decreasing, shows left continuity of $T\mapsto \underline{w}_T(x)$. For the right continuity, let $\tau_{\epsilon}\leq T$ be an $\vep$-optimal stopping time for $e^{\underline{w}_T(x)}$.  Using monotonicity of $\underline w_{T}$, boundedness of $g$ and~\eqref{IF}, we get
\begin{align}
e^{\underline{w}_T(x)} \leq \lim_{u\downarrow 0}e^{\underline{w}_{T+u}(x)} & \leq \lim_{u\downarrow 0} \E_x\left[e^{\int_0^{\tau_\vep+u} g(X_s)ds + 1_{\{\tau_\vep+u<T+u\}}G(X_{\tau_\vep+u})}\right] \nonumber \nonumber\\
& = \E_x\left[e^{\int_0^{\tau_\vep} g(X_s)ds + 1_{\{\tau_\vep<T\}}G(X_{\tau_\vep})}\right]\leq e^{\underline{w}_T(x)}+\vep;\label{eq:w_down_left_final}
\end{align}
note in the second line we used bounded convergence theorem and the fact that $(X_t)$ is right continuous. Letting $\vep\to 0$ we get right continuity of $T \to \underline{w}_T(x)$, for any $x\in E$.

\medskip
\noindent {\it Step 2.} Continuity of $x\mapsto \underline{w}_T(x)$ for fixed $T\geq 0$. As in the first step, recalling Lemma~\ref{lm:opt_stop_finhor_bound} and Lemma~\ref{lm:fin1}, we get that, for any $T\geq 0$, the function $x\mapsto \underline{w}_T(x)$ is lower semicontinuous. To show upper semicontinuity we use dyadic approximation of $\underline{w}_T$. For any $m\in\bN$ and $T\geq 0$, we set
\begin{equation}\label{eq:1step.1}
\underline{w}_T^m(x):=\inf_{ \tau \in \mathcal{T}^m_T} \ln\E_x\left[ e^{\int_{0}^{\tau} g(X_s)ds+1_{\{\tau<T\}}G(X_\tau)}\right],\quad x\in E,
\end{equation}
where $\mathcal{T}^m_T$ is the family of stopping times taking values in $\left[0,{\tfrac{T}{2^m}},\tfrac{2T}{2^m}, \ldots,T\right]$. We show that, for any $T\geq 0$ and $m\in \mathbb{N}$, the map $x\mapsto \underline{w}_T^m(x)$ is continuous. Let us fix $T\geq 0$, $m\in\bN$, and define recursively the sequence of functions
\begin{align*}
\widetilde{w}_T^0(x) & :=0,\\
e^{\widetilde{w}_T^{j}(x)} & := \mathbb{E}_x\left[  e^{\int_0^{\frac{T}{2^m}} g(X_s)ds+\widetilde{w}_T^{j-1}(X_{\frac{T}{2^m}})}\right]\wedge e^{G(x)}, \quad j=1, \ldots, 2^m.
\end{align*}
By~\eqref{Feller}, the function $x\mapsto\widetilde{w}_T^{j}(x)$ is continuous, for $j=1,\ldots,2^m$. Also, using standard iteration arguments (see e.g. Section 2.2 in~\cite{Shi1978}) one can show that  $\underline{w}_T^m=\widetilde{w}_T^{2^m}$, which implies continuity of $x\mapsto \underline{w}_T^m(x)$.

We now show that $\lim_{m\to\infty} \underline{w}_T^m(x) = \underline{w}_T(x)$ for any $x\in E$ and $T\geq 0$. This together with continuity of $x\mapsto \underline{w}_T^m(x)$ and the fact that $(\underline{w}_T^m(x))_{m\in \mathbb{N}}$ is monotonically decreasing, shows upper semicontinuity of  $x\mapsto \underline{w}_T(x)$. Let $\vep>0$ and $\tau_\vep\leq T$ be an $\vep$-optimal stopping time for $e^{\underline{w}_T(x)}$. For any $m\in\bN$, we set
\[
\tau_{\vep}^m:=\inf\{\tau\in \cT_T^m \colon \tau\geq \tau_{\vep}\}= \textstyle \sum_{j=1}^{2^m}1_{\left\{\frac{T}{2^m}(j-1)< \tau_{\vep}\leq \frac{T}{2^m}j\right\}}\frac{T}{2^m}j.
\]
Noting that $\tau_{\vep}^m\leq T$, for any $x\in E$, we get
\begin{align}\label{cal1}
0&\leq  e^{\underline{w}_T^m(x)}-e^{\underline{w}_T(x)} \nonumber\\ 
&\leq  \E_x\left[ e^{\int_{0}^{\tau_{\vep}^m} g(X_s)ds+1_{\{\tau_{\vep}^m<T\}}G(X_{\tau_{\vep}^m})}\right] -\E_x\left[ e^{\int_{0}^{\tau_{\vep}} g(X_s)ds+1_{\{\tau_{\vep}<T\}}G(X_{\tau_{\vep}})}\right]+\vep \nonumber \\
&=  \E_x\left[ e^{\int_{0}^{\tau_{\vep}^m} g(X_s)ds}\left(e^{1_{\{\tau_{\vep}^m<T\}}G(X_{\tau_{\vep}^m})}-e^{1_{\{\tau_{\vep}<T\}}G(X_{\tau_{\vep}})}\right)\right] \nonumber \\
&\phantom{=} + \E_x\left[ e^{\int_{0}^{\tau_{\vep}} g(X_s)ds}\left(e^{\int_{\tau_{\vep}}^{\tau_{\vep}^m} g(X_s)ds}-1\right)e^{1_{\{\tau_{\vep}<T\}}G(X_{\tau_{\vep}})}  \right] +\vep  \nonumber \\
&\leq  \E_x\left[e^{\int_{0}^{\tau_{\vep}^m} g(X_s)ds}\left(e^{1_{\{\tau_{\vep}^m<T\}}G(X_{\tau_{\vep}^m})}-e^{1_{\{\tau_{\vep}<T\}}G(X_{\tau_{\vep}})}\right)\right]\nonumber\\
&\phantom{=} + \left(e^{\underline{w}_T(x)}+\varepsilon\right)\left(e^{\frac{T}{2^m}\Vert g\Vert}-1\right)+\vep.
\end{align}
For any $T\geq 0$ and $x\in E$, we get $\left(e^{\underline{w}_T(x)}+\varepsilon\right)\left(e^{\frac{T}{2^m}\Vert g\Vert}-1\right)\to 0$ as $m\to \infty$. Also, noting that $\tau_{\vep}^m \downarrow \tau_{\vep}$ and using~\eqref{IF}, we get 
\begin{align}\label{cal2}
& \E_x\left[e^{\int_{0}^{\tau_{\vep}^m} g(X_s)ds}\left(e^{1_{\{\tau_{\vep}^m<T\}}G(X_{\tau_{\vep}^m})}-e^{1_{\{\tau_{\vep}<T\}}G(X_{\tau_{\vep}})}\right)\right] \nonumber\\
& \phantom{=} \leq \E_x\left[e^{\int_{0}^{\tau_{\vep}^m} g(X_s)ds}\left(e^{1_{\{\tau_{\vep}<T\}}G(X_{\tau_{\vep}^m})}-e^{1_{\{\tau_{\vep}<T\}}G(X_{\tau_{\vep}})}\right)\right] \nonumber\\
& \phantom{=} \leq e^{T\Vert g\Vert}\E_x\left|e^{G(X_{\tau_{\vep}^m})}-e^{G(X_{\tau_{\vep}})}\right|\to 0, \quad m\to\infty.
\end{align}
Consequently, letting $\epsilon\to 0$ in \eqref{cal1}, we conclude the proof of this step.

\medskip
\noindent {\it Step 3.} Continuity of $(T,x)\mapsto \underline{w}_T(x)$, optimality of~\eqref{optstop}, and martingale characterisation. Let the sequence $(T_n)\subset \mathbb{R}_+$ be monotone and such that $T_n\to T$, and $(x_n)\subset E$ be such that $x_n\to x\in E$. Using continuity of $x\mapsto\underline{w}_T(x)$ and monotonicity of $T\mapsto\underline{w}_T(x)$, from Dini's theorem we get that the convergence of $\underline{w}_{T_n}(x)$ to $\underline{w}_T(x)$ is uniform in $x$ from compact sets; see Theorem 7.13 in~\cite{Rud1976} for details. Thus, we get
\begin{equation}\label{eq:th:w^t:conv_t_x}
|\underline{w}_{T_n}(x_n)-\underline{w}_T(x)|\to 0, \quad n\to\infty,
\end{equation}
which shows continuity of the map $(T,x)\mapsto \underline{w}_T(x)$. Thus, using Lemma~\ref{lm:opt_fin} we get that, for any $T\geq 0$ and $x\in E$, the stopping time $\underline{\tau}_T$ is optimal for $\underline{w}_T(x)$, the process $\underline{z}_T(t)$ is a $\mathbb{P}_x$-submartingale and $\underline{z}_T(t\wedge \underline{\tau}_T )$ is a $\mathbb{P}_x$-martingale, which concludes the  proof.
\end{proof}

\section*{Acknowledgements}
Damian Jelito and {\L}ukasz Stettner acknowledge research support by NCN grant no. 2020/37/B/ST1/00463.


\end{document}